\documentclass[11pt]{amsproc}

\usepackage[hmargin=3.2cm,vmargin=3.4cm]{geometry}
\usepackage{amsmath,amssymb,amsthm,mathtools}
\usepackage{enumitem, todonotes}
\usepackage{tikz}
\usepackage{amsrefs}
\usepackage[colorlinks=true,linkcolor=blue,urlcolor=black]{hyperref}
\usepackage{todonotes}

\newcommand{\BH}{B(H)}

\newcommand{\QH}{Q(H)}

\newcommand{\bbN}{\mathbb N}

\newcommand{\bbT}{\mathbb T}

\DeclareMathOperator{\dist}{dist}
\DeclareMathOperator{\SU}{SU}
\DeclareMathOperator{\WOTlim}{WOT-\lim}
\DeclareMathOperator{\dimnuc}{dim_{nuc}}
\DeclareMathOperator{\Eval}{Eval}

\DeclareMathOperator{\Ad}{Ad}
\DeclareMathOperator{\Sep}{Sep}
\newcommand{\cstar}{C*}

\newcommand{\cst}{\mathsf{C^*}}

\newcommand{\sfA}{\mathsf A}
\newcommand{\sfAR}{\mathsf A\mathcal R}
\newcommand{\sfB}{\mathsf B}
\newcommand{\sfC}{\mathsf C}

\newcommand{\sfD}{\mathsf D}

\newcommand{\sfH}{\mathsf {H}}
\DeclareMathOperator{\sfAC}{\mathsf {AC}}
\DeclareMathOperator{\sfvN}{\mathsf {vN}}
\DeclareMathOperator{\sfAF}{\mathsf {AF}}
\DeclareMathOperator{\sfFD}{\mathsf {FD}}
\DeclareMathOperator{\sfUC}{\mathsf {uAb}}
\DeclareMathOperator{\sfCOM}{\mathsf {Ab}}

\newcommand{\cC}{\mathcal C}
\newcommand{\cB}{\mathcal B}

\newcommand{\cR}{\mathcal R}
\newcommand{\cM}{\mathcal M}
\newcommand{\cQ}{\mathcal Q}
\newcommand{\bbR}{\mathbb R}
\newcommand{\bbC}{\mathbb C}
\newcommand{\cU}{\mathcal U}

\newcommand{\cP}{\mathcal P}
\newcommand{\cI}{\mathcal I}
\newcommand{\cK}{\mathcal K}
\newcommand{\frD}{\mathfrak D}
\newcommand{\frX}{\mathfrak X}
\newcommand{\frY}{\mathfrak Y}

\DeclareMathOperator{\supp}{supp}

\DeclareMathOperator{\dom}{dom}
\DeclareMathOperator{\OCAT}{OCA_T}
\DeclareMathOperator{\MA}{MA}
\DeclareMathOperator{\ZFC}{ZFC}

\theoremstyle{definition}
\newtheorem{thm}{Theorem}

\newtheorem{coro}[thm]{Corollary}
\newtheorem{definition}{Definition}[section]
\theoremstyle{plain}
\newtheorem{lemma}[definition]{Lemma}
\newtheorem{proposition}[definition]{Proposition}
\newtheorem{theorem}[definition]{Theorem}
\newtheorem{corollary}[definition]{Corollary}
\theoremstyle{remark}

\newtheorem{question}[definition]{Question}
\newtheorem{claim}{Claim}

\author{Vadim Alekseev}
\address{V.A., Institute of Geometry, TU Dresden, 01062 Dresden, Germany}
\email{vadim.alekseev@tu-dresden.de}

\author{Ilijas Farah}
\address{I.F., Department of Mathematics and Statistics, York University, 4700 Keele Street, North York, Ontario, Canada, M3J 1P3, and Matematicki Institut SANU, Kneza Mihaila 36, 11 000 Beograd, p.p. 367, Serbia}
\email{ifarah@yorku.ca}

\author{Andreas Thom}
\address{A.T., Institute of Geometry, TU Dresden, 01062 Dresden, Germany}
\email{andreas.thom@tu-dresden.de}

\begin{document}

\title{Ulam stability for classes of nuclear \cstar-algebras}

\begin{abstract}
We study Ulam stability for approximate \(*\)-homomorphisms of
\(C^*\)-algebras. We prove stability results for several classes of nuclear
\(C^*\)-algebras with respect to von Neumann algebra targets, including
abelian \(C^*\)-algebras and large classes arising in the Elliott
classification program. We also discuss permanence properties, counterexamples,
and related stability phenomena. As applications, we obtain rigidity and
independence results for corona algebras.
\end{abstract}

\maketitle

\tableofcontents

 \section{Introduction}

In \cite{Ul:Problems}, S. Ulam defined an $\varepsilon$-homomorphism from a group $G$ into a metric group $(H,d)$ as a map $\varphi\colon G\to H$ that satisfies 
\[
d(\varphi(gh),\varphi(g)\varphi(h))<\varepsilon
\] 
for all $g,h \in G$, and asked under what conditions such $\varepsilon$-homomorphism can be uniformly approximated by a true homomorphism. 
This definition has been adapted to other algebraic structures equipped with a metric, giving rise to the very general and well-studied 
concept of Ulam stability. Early on, Hyers observed  that for an  $\varepsilon$-homomorphism $\varphi\colon \frX\to\frY$ between Banach spaces the sequence $2^{-n}\varphi(2^{n} x)$ converges to a  homomorphism that  $\varepsilon$-approximates~$\varphi$ (\cite{hyers1941on}, see also \cite{hyers1992approximate}). 

In case of Banach space-based structures such as \cstar-algebras one could meaningfully weaken the definition of an $\varepsilon$-homomorphism in more than one way. B. Johnson studied Ulam stability of linear maps  $\varphi$ between Banach algebras that 
are approximately multiplicative,  in the sense that 
\[
\|\varphi(ab)-\varphi(a)\varphi(b)\|<\varepsilon \|a\|\|b\|
\]
for all $a$ and $b$ in the domain 
 (\cite{johnson1988approximately}, \cite{johnson1982counterexample}), apparently unaware of Ulam's question. 
Tangentially related is the well-studied notion of weakly stable relations on a \cstar-algebra (\cite{Lo:Lifting}); it could be considered as a local-to-local analog of Ulam stability of $*$-homomorphisms, or as the  Ulam stability of equations.  The study of $\varepsilon$-\cstar-algebras (\cite{kitaev2024almost}) is subtly different from Ulam stability.  

Our notion of $\varepsilon$-$*$-homomorphisms defined below  is motivated by the study of rigidity of coronas of \cstar-algebras (\cite{Fa:All}, \cite{MV}, \cite{vignati2022rigidity}, \cite[\S 17.2]{FarahCT}, \cite{farah2025corona}); see \S\ref{S.CoronaRigidity} (also Theorem~\ref{T.CoronaEmbedding} and Theorem~\ref{T.CoronaRigidity}) for details and applications of our results.  The following definition first appeared explicitly in \cite[Def.~1.1]{MV}.

\begin{definition}
Let $A,B$ be unital $C^*$-algebras and $\varepsilon\geq0$.
An \emph{$\varepsilon$-$*$-homomorphism} is a map $\varphi\colon A \to B$ such that
for  all $a,b\in A_{\le 1}$, and all $\lambda\in\mathbb C_{\le 1}$, we have $\varphi(a)\in B_{\le 1}$ and
\begin{align*}
\|\varphi(a+b)-\varphi(a)-\varphi(b)\| &\leq \varepsilon,\\
\|\varphi(ab)-\varphi(a)\varphi(b)\| &\leq \varepsilon,\\
\|\varphi(a^*)-\varphi(a)^*\| &\leq \varepsilon,\\
\|\varphi(\lambda a)-\lambda\varphi(a)\| &\leq \varepsilon.
\end{align*}
\end{definition}

\begin{definition}
    We will denote by $\sfFD$ the class of finite-dimensional $C^*$-algebras, by $\sfAF$ the class of approximately finite-dimensional $C^*$-algebras, by $\sfUC$ the class of unital abelian $C^*$-algebras, by $\cst$ the class of all $C^*$-algebras, and by $\sfvN$ the class of von Neumann algebras. 
\end{definition}
\begin{definition}
	Suppose that $\sfC$ and $\sfD$  are classes of \cstar-algebra. Following the conventions,  we say that the pair $(\sfC,\sfD)$ is \emph{Ulam stable} (or that $\sfC$ is Ulam stable with respect to $\sfD$) if there is a function $f_{\sfC,\sfD}(\varepsilon)\to 0$ as $\varepsilon\to 0$ such that for every $A\in \sfC$, $B\in \sfD$, and $\varepsilon$-$*$-homomorphism $\varphi\colon A\to B$ there is a $*$-homomorphism $\psi\colon A\to B$ such that $\sup_{\|a\|\le 1}\|\varphi(a)-\psi(a)\|\le f_{\sfC,\sfD}(\varepsilon)$.
We will write $f_{\sfC}$ for $f_{\sfC,\sfvN}$. 
\end{definition}

The following summarizes existing results on Ulam stability. 
\begin{theorem}\label{thm:MV_AF} Each of the following pairs is Ulam stable. 
\begin{enumerate}
	\item $(\sfFD,\sfFD)$, by the second author \cite{Fa:All}. 
	\item \label{I.MV_fin} $(\sfFD, \cst)$, by McKenney--Vignati \cite{MV}. 
	\item \label{I.MV_AF} $(\sfAF,\sfvN)$, by McKenney--Vignati \cite{MV}. 
	\item \label{I.Semrl} $(\sfUC,\sfUC)$, by \v Semrl \cite{semrl1999non}. \qed 
	\end{enumerate}
\end{theorem}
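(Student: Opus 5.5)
This theorem only collects results already in the literature; it ends with \qed and the paper attributes each item to its source. The proof therefore amounts to checking that each cited result yields Ulam stability exactly in the sense of our definition. That means one needs an \emph{$\varepsilon$-$*$-homomorphism} in the sense of \cite[Def.~1.1]{MV}, a genuine $*$-homomorphism $\psi$, and a uniform error function $f_{\sfC,\sfD}(\varepsilon)\to0$ that does not depend on the particular $A\in\sfC$ and $B\in\sfD$. I would check this item by item, recalling the mechanism of each proof so the reduction is transparent.

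For (1) and (2), the key ingredient is amenability of the unitary group of a finite-dimensional $A$. The plan is as follows.
\begin{enumerate}
\item Restrict $\varphi$ to the unitary group $U(A)$. This gives an approximate representation into $U(B)$, after a polar-decomposition correction, since $\varphi(u)$ is approximately unitary.
\item Average with respect to Haar measure, in the Kazhdan/Grove--Karcher--Ruh style. This produces a genuine unitary representation close to $\varphi|_{U(A)}$, with error independent of $\dim A$.
\item Extend linearly. Unitaries span $A$ with controlled coefficients, since every contraction is a mean of four unitaries, so the extension is a $*$-homomorphism uniformly close to $\varphi$.
\end{enumerate}
For the target $\cst$ in (2), the averaging must be done in $B$ itself. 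This is possible because $U(A)$ is compact, so Bochner integrals converge in norm; \cite{MV} carries this out. Item (1) is the special case $B\in\sfFD\subseteq\cst$, already proved in \cite{Fa:All}.

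For (3), write $A=\overline{\bigcup_n A_n}$ with $A_n$ finite-dimensional. Apply (2) on each $A_n$ to get $*$-homomorphisms $\psi_n$ with $\|\psi_n-\varphi|_{A_n}\|\le f_{\sfFD}(\varepsilon)$. The main obstacle is coherence: $\psi_{n+1}|_{A_n}$ and $\psi_n$ are merely close, not equal. Both are close to $\varphi$, so they are close to each other, and two nearby $*$-homomorphisms on a finite-dimensional algebra are conjugate by a unitary near $1$. One can then conjugate successively, but the corrections accumulate. This is where a von Neumann target is used: one passes to a weak$^*$ limit point of the $\psi_n$ (a point-ultraweak limit along an ultrafilter). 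Such a limit is again multiplicative on $\bigcup_n A_n$, because the perturbations are controlled by a fixed conjugating unitary in the unit ball, which is weak$^*$ compact. This is the argument of \cite{MV}, and I would simply cite it.

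Item (4) is \v Semrl's theorem \cite{semrl1999non}. There the $*$-homomorphisms $C(X)\to C(Y)$ correspond to continuous maps $Y\to X$, and an approximate character at each $y\in Y$ is near an evaluation. I would confirm that his hypotheses on approximate additivity and multiplicativity are implied by ours.
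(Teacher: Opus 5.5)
\paragraph{The mechanism proposed for (3) is false.}
Reducing each item to the literature is all the paper does as well. It adds two remarks. First, (3) is obtained from (2) through Johnson's inductive-limit theorem, i.e.\ Theorem~\ref{T.Closure}\eqref{thm:Johnson} as proved in \cite[Theorem~3.1]{MV}. Second, (4) requires translating \v{S}emrl's notion of approximate homomorphism, which is done in \cite[Theorem~5.18]{farah2025corona}. The mechanism you describe for (3), however, is false, so it cannot stand in for that citation.

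A point-ultraweak limit of $*$-homomorphisms is in general only completely positive. This holds even if the $*$-homomorphisms are uniformly close to each other and conjugate by unitaries near $1$. Weak$^*$ compactness does give a limit $W$ of the conjugating unitaries $w_n$, but $W$ need not be unitary. Moreover $\lim_n w_nxw_n^*\neq WxW^*$ in general, because multiplication is only separately weak$^*$ continuous.

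For instance, let $s_n$ be self-adjoint unitaries on $K=L^2[0,1]$ tending weakly to $0$ (multiplication by Rademacher functions), and let $\sigma=e_{12}+e_{21}\in M_2$. Put $w_n=\cos t+i\sin t\,(\sigma\otimes s_n)$ and $\psi_n=\Ad(w_n)\circ(\mathrm{id}\otimes 1)\colon M_2\to M_2\otimes B(K)$. These are $*$-homomorphisms within $2|t|$ of the $*$-homomorphism $a\mapsto a\otimes 1$. So they are admissible choices in your scheme, already for the constant system $A_n=M_2$. Yet $\lim_n\psi_n(a)=(\cos^2t\,a+\sin^2t\,\sigma a\sigma)\otimes 1$, which maps $e_{11}$ to a non-projection; here $W=\cos t\cdot 1$.

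What does hold is the following: the limit $\theta$ is linear, $*$-preserving, completely positive and contractive, within $f(\varepsilon)$ of $\varphi$, and only $O(f(\varepsilon)^2)$-multiplicative. Indeed, write $\psi_n=\psi_k+d_n$ on $A_k$ and $D=\lim_n d_n$, with limits along the ultrafilter. Separate continuity gives $\theta(ab)-\theta(a)\theta(b)=\lim_n d_n(a)d_n(b)-D(a)D(b)$ for $a,b\in A_k$. Promoting such a map to an honest $*$-homomorphism is a further, genuinely nontrivial step. That step is exactly what Johnson's theorem provides, which is why the paper cites it rather than a weak$^*$-limit argument.

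\paragraph{Two further steps are less routine than the sketch suggests.}
In (1)--(2), ``extend linearly'' presupposes that the averaged representation $\pi\colon U(A)\to U(B)$ respects linear relations among unitaries. A group homomorphism need not do this: $u\mapsto\det u$ on $U(2)$ sends both $1$ and $-1$ to $1$, so no linear map extends it. Proving that proximity to $\varphi$ forces $\pi$ to come from a $*$-homomorphism is a real argument. This paper carries it out as Theorem~\ref{T.Ulam2}, via the abelian case (Lemma~\ref{L.pi-to-rho-and-Phi}) and the Lie--Trotter formula. Haar averaging in norm also needs $u\mapsto\varphi(u)$ to be measurable, which must first be arranged, e.g.\ by a continuous approximation on the compact unit ball.

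In (4), \v{S}emrl's theorem concerns a different notion of approximate homomorphism into $C(Y)$, not necessarily $*$-preserving. So one does not merely check that our hypotheses imply his. The paper relies on \cite[Theorem~5.18]{farah2025corona} for this translation. It also reproves (4) independently in Theorem~\ref{T.Abelian+}, where the continuity of the pointwise-chosen map $Y\to X$ is Lemma~\ref{L.FunctionfIsContinuous}.
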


The implication from \eqref{I.MV_fin} to \eqref{I.MV_AF} is a consequence of a result of B. Johnson (see Theorem~\ref{T.Closure}\eqref{thm:Johnson}). 
The result of \eqref{I.Semrl} is a consequence of a more general theorem. In \cite[Theorem 2.1]{semrl1999non} it was proved that approximate homomorphisms (not necessarily $*$-homomorphisms, and with a different definition of `approximate') from an arbitrary Banach algebra into a Banach algebra of the form $C(X)$ for a compact Hausdorff $X$ are Ulam-stable. This implies Ulam stability for approximate $*$-homomorphisms between unital abelian \cstar-algebras (see \cite[Theorem 5.18]{farah2025corona}).

The aim of this note is to extend Theorem~\ref{thm:MV_AF}(\ref{I.MV_AF}) to a much larger class of nuclear \cstar-algebras and give applications to corona rigidity, but we first give an alternative proof of Theorem \ref{thm:MV_AF}(\ref{I.Semrl}) in Theorem~\ref{T.Abelian+}. The case when 
 the range is $\bbC$ (which is the key step in our proof) has been proved independently by Jennifer Pi, by using completely different methods. 

After proving various closure properties of  classes of \cstar-algebras that are Ulam stable with respect to von Neumann algebras, 
using the recent breakthrough results in the classification of nuclear \cstar-algebras (\cite{gong2020classificationI}, \cite{gong2020classificationII}, also \cite{carrion2023classifying}), we prove the following (it is a consequence of Theorem~\ref{T.Classifiable}). 

\begin{thm}\label{T.Elliott}
	The class of all stably finite Elliott-classifiable unital \cstar-algebras is Ulam stable with respect to von Neumann algebras. 
\end{thm}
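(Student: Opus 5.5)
Since Theorem~\ref{T.Elliott} is announced as a consequence of Theorem~\ref{T.Classifiable}, my plan is to obtain it from structural models supplied by the classification theorem together with closure properties of the class of \cstar-algebras that are Ulam stable with respect to $\sfvN$.

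The first step is to establish these closure properties, each with explicit control of the function $f$. I would prove the following.
\begin{enumerate}
\item Stability under inductive limits with injective connecting maps. This is the Johnson-type argument already used to pass from $(\sfFD,\cst)$ to $(\sfAF,\sfvN)$ (Theorem~\ref{T.Closure}). The weak$^*$-compactness of the unit ball of the von Neumann target is what allows the local approximants to be glued, by taking a point-weak$^*$ limit of $*$-homomorphisms defined on the building blocks.
\item Stability under finite direct sums, and under corners $pM_n(A)p$, for a uniformly controlled $f$.
\item Stability for the abelian building blocks $C(X)$ with $X$ of dimension at most~$1$, such as $C[0,1]$ and $C(\bbT)$. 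This follows from the abelian result Theorem~\ref{T.Abelian+}, where the key case is the one with target~$\bbC$.
\end{enumerate}

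The second step is to invoke the range-of-invariant results of Gong--Lin--Niu and Elliott--Gong--Lin--Niu. Every stably finite, unital, simple, separable, nuclear, $\mathcal Z$-stable \cstar-algebra satisfying the UCT is isomorphic to an inductive limit of subhomogeneous algebras of a restricted form, for instance generalized dimension drop or Razak--Elliott--Thomsen building blocks. Concretely, these are algebras of the form
\[
\{f\in C([0,1],F_1): f(0),f(1)\in \text{images of } F_0\},
\]
with $F_0,F_1\in\sfFD$, possibly tensored with $C(\bbT)$ or with other one-dimensional spaces. Some models may have non-injective connecting maps. Those can be handled by passing to images, since quotients of such blocks have the same form; alternatively, I would check that the Johnson argument only requires approximate injectivity on finite sets.

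The third, and main, step is to show that each such building block is Ulam stable with respect to $\sfvN$, with a single function $f$ independent of the sizes of $F_0$ and $F_1$. I would present a block as a pullback of $C([0,1],F_1)\cong C[0,1]\otimes F_1$ over $F_1\oplus F_1$ with $F_0$. The plan is then as follows.
\begin{enumerate}
\item Use stability of $C[0,1]$ and of $\sfFD$ to perturb $\varphi$ on the two pieces separately.
\item Use the $\sfFD$ stability of \cite{MV} in the target to realize the perturbed map $\psi$ on $C[0,1]\otimes F_1$. The approach is to conjugate the approximate matrix units by a unitary close to~$1$, and then to apply the abelian result in the relative commutant of these matrix units inside the von Neumann target; that relative commutant is again a von Neumann algebra.
\item Correct the two maps so that they agree on $F_0$, by a further unitary conjugation close to~$1$, obtained from stability of $(\sfFD,\cst)$ applied to~$F_0$.
\end{enumerate}
The obstacle I expect is compatibility across the pullback. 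The spectral projections of the image of the coordinate function near the endpoints have to match the boundary conditions. The hardest point is to make this exact without the error growing with the matrix sizes. The first tool I would try is the Borel functional calculus, which is available because the target is a von Neumann algebra: cut the interval at $\delta$ and $1-\delta$ and reassign the spectral mass near the endpoints.
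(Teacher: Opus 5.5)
\textbf{There is a genuine gap in your main step~3.} Your overall architecture matches the paper's: classification supplies inductive limits of low-dimensional subhomogeneous blocks, stability passes to inductive limits, and so it suffices to treat the blocks with one uniform function. But step~3 is not carried out. You correctly identify compatibility across the pullback as the crux, leave it open, and the tools you propose would not close it.

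First, $\varphi$ is defined only on the pullback $D=\{f\in C([0,1],F_1): f(0),f(1)\in\text{images of }F_0\}$. This algebra does not contain the constant $F_1$-valued functions. So you cannot ``perturb $\varphi$ on the two pieces'' or realize a map $\psi$ on $C[0,1]\otimes F_1$ without first extending $\varphi$. The available extension (Lemma~\ref{L.extend.from.ideal}) goes through the ideal $I=C_0((0,1),F_1)$, and it discards exactly the part of $\varphi$ sitting over the endpoints, which is where the boundary condition lives.

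Second, spectral projections are discontinuous functions of an operator. The projections you get by cutting the image of the coordinate function at $\delta$ and $1-\delta$ need not nearly commute with $\varphi[D]$, so no norm estimate survives the ``reassignment of spectral mass''.

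Two smaller points. Checking that a tensor product of commuting homomorphisms approximates $\varphi$ on the unit ball of $C[0,1]\otimes F_1$ by expanding in matrix units gives errors that grow with $\dim F_1$; the paper avoids this with the fibrewise $C(X)$-algebra estimate in Theorem~\ref{T.Closure}\eqref{I.Tensor}. Also, closure under corners $pM_n(A)p$ with uniform control is not available; it is essentially the paper's open question on full hereditary subalgebras. You do not actually need it.

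\textbf{The missing idea} is to stop matching boundary values. Instead, view $D$ as an extension $0\to I\to D\to F_0\to 0$ and prove that Ulam stability is closed under extensions, as in Theorem~\ref{T.Closure}\eqref{I.Extensions}. More generally, a pullback along the surjective restriction map $C(X,F)\to C(Y,F)$ is an extension of $B$ by its kernel. The argument runs as follows.
\begin{enumerate}
\item The ideal $I$ is Ulam stable, because $C([0,1],F_1)$ is and ideals inherit stability. So there is an exact $*$-homomorphism $\Phi_I$ with $\Phi_I\approx_{f(\varepsilon)}\varphi|_I$ on the unit ball.
\item Take a quasicentral approximate unit $(e_\lambda)$ of $I$ in $D$ and put $p=\sup_\lambda\Phi_I(e_\lambda)$. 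Since $p$ is a weak limit of elements that nearly commute with $\varphi(a)$, lower semicontinuity of the norm gives $\|[\varphi(a),p]\|\le 3\varepsilon+2f(\varepsilon)$.
\item On $pMp$, the map $\Phi_0(a)=\WOTlim_{\lambda\to\cU}\Phi_I(e_\lambda a)$ is an honest $*$-homomorphism defined on all of $D$ and close to $p\varphi(\cdot)p$. The boundary conditions hold automatically.
\item Since $\Phi_I[I]\subseteq pMp$, the compression $(1-p)\varphi(\cdot)(1-p)$ is uniformly small on $I_1$. It therefore factors approximately through $F_0$, and $(\sfFD,\sfvN)$-stability corrects it to a $*$-homomorphism $\Phi_1$ into the von Neumann algebra $(1-p)M(1-p)$.
\item Then $\Phi_0+\Phi_1$ is the required $*$-homomorphism. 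Its error depends only on $\varepsilon$ and on the stability functions of the ideal and quotient classes, never on matrix sizes.
\end{enumerate}
Iterating this a bounded number of times handles $\cR_2$ (Corollary~\ref{C.Recursive}). Combined with inductive limits and the structural input cited before Theorem~\ref{T.Classifiable}, this gives the statement.
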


The following is proved as Corollary~\ref{T.AH}. 

\begin{thm}\label{T.C(X)otimesF}
The class of all inductive limits of \cstar-algebras of the form $C(X)\otimes F$ for a compact Hausdorff space $X$ and an $\sfAF$ algebra $F$ is Ulam stable with respect to the von Neumann algebras. 
\end{thm}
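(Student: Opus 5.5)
The plan is to combine three ingredients: Ulam stability of unital abelian \cstar-algebras with respect to von Neumann algebras, Ulam stability of $\sfAF$ algebras (Theorem~\ref{thm:MV_AF}\eqref{I.MV_AF}), and the permanence properties promised in the introduction, namely closure under (appropriate) tensor products with finite-dimensional algebras and under inductive limits. The statement is then Corollary~\ref{T.AH}, obtained from these closure results and from the abelian case, Theorem~\ref{T.Abelian+}.

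\emph{Step 1: the abelian case with von Neumann targets.} First I would show that $\sfUC$ is Ulam stable with respect to $\sfvN$. The key case is when the target is $\bbC$: an $\varepsilon$-$*$-homomorphism $C(X)\to\bbC$ should be close to evaluation at a point. For this I would look at where $\varphi$ sends the projections/positive functions and use a Hyers-type averaging together with an ultrafilter argument to find the point. I would then pass to an abelian von Neumann target $L^\infty(\mu)$ by working fibrewise, or via its spectrum, so that $L^\infty(\mu)\cong C(\Omega)$ with $\Omega$ extremally disconnected. For a general von Neumann target $M$, the strategy is to conjugate. I would first push $\varphi$ into an approximately commuting family. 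Then I would apply the $\sfFD$/$\sfAF$ stability results to the finite-dimensional subalgebras $C(X)\supseteq \bbC^n$ spanned by partitions of unity (after approximating by projections in $C(\Omega)$ when $X$ is zero-dimensional). Finally, I would use a surjection $K\to X$ from the Cantor set or a Stonean cover: $C(X)\subseteq C(\tilde X)$ with $\tilde X$ totally disconnected, so that $C(\tilde X)$ is $\sfAF$. The route I would actually try is to use an injectivity/conditional-expectation trick: extend $\varphi$ to $C(\tilde X)$ and apply the $\sfAF$ result there. I expect this extension to be the \textbf{main obstacle}, because an $\varepsilon$-homomorphism need not extend. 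If the extension fails, I would fall back on proving the abelian case directly via the scalar case and a measurable-selection argument inside the centre of $M$.

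\emph{Step 2: tensoring with $\sfAF$.} Given stability for $C(X)$ uniformly in $X$, I would show that $C(X)\otimes F$ is stable, first for $F$ finite-dimensional, i.e.\ for $\bigoplus_i M_{n_i}(C(X))$. I would restrict $\varphi$ to the copy of $F$ and use Theorem~\ref{thm:MV_AF}\eqref{I.MV_fin} to replace it by a genuine $*$-homomorphism $\pi\colon F\to M$, after a small perturbation of $\varphi$. I would then restrict $\varphi$ to $C(X)\otimes e_{11}^{(i)}$, mapping into the corner $\pi(e_{11}^{(i)})M\pi(e_{11}^{(i)})$, which is a von Neumann algebra; Step 1 applies there. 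Finally I would rebuild the map using the matrix units $\pi(e_{jk})$. The errors depend on $\dim F$, so for $\sfAF$ $F$ I would instead write $C(X)\otimes F=\varinjlim C(X)\otimes F_n$ and use the inductive-limit closure from Step 3.

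\emph{Step 3: inductive limits.} For $A=\varinjlim A_n$ with each $A_n$ in a class stable with respect to $\sfvN$ with a \emph{uniform} function $f$, I would take an $\varepsilon$-$*$-homomorphism $\varphi\colon A\to M$ and obtain $*$-homomorphisms $\psi_n\colon A_n\to M$ that are $f(\varepsilon)$-close to $\varphi$. I would then take a point-weak$^*$ cluster point along an ultrafilter, using compactness of the unit ball of $M$. Weak$^*$ limits preserve neither multiplicativity nor norms directly. To fix this, I would invoke Johnson's result, Theorem~\ref{T.Closure}\eqref{thm:Johnson}: the restrictions $\psi_{n+1}|_{A_n}$ and $\psi_n$ are $2f(\varepsilon)$-close $*$-homomorphisms, hence nearly unitarily conjugate. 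This lets me inductively conjugate so that the maps form a coherent system, converge in norm on $\bigcup A_n$, and extend by continuity. The constants in the inductive step must be summable, which I would arrange by applying stability with respect to increasingly finer partial information. Otherwise, as McKenney--Vignati do for $\sfAF$, I would accept that only the first step contributes $f(\varepsilon)$ and that later corrections are controlled by Johnson's theorem alone.
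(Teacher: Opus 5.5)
Your top-level reduction agrees with the paper's. The statement is Corollary~\ref{T.AH}, obtained from uniform $\sfFD$-stability, closure under tensoring with abelian algebras, and closure under inductive limits; for $F$ in $\sfAF$, $C(X)\otimes F$ is itself a limit of the algebras $C(X)\otimes F_n$. However, the two ingredients you set out to prove yourself do not go through as proposed.

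\textbf{Step 1.} For a target such as $M=B(H)$, the whole difficulty is to replace the approximately commuting operators $\varphi(a)$ by an exactly commuting family. None of your routes does this.
\begin{enumerate}
\item Partitions of unity do not span finite-dimensional $*$-subalgebras of $C(X)$. For connected $X$ the only such subalgebra is $\bbC$.
\item You give no mechanism for extending $\varphi$ to $C(\tilde X)$. The natural extension procedure (Borel functional calculus composed with a Borel section of $\tilde X\to X$) requires $\varphi$ to already be a genuine $*$-homomorphism, so that route is circular.
\item The fallback through the centre of $M$ fails because the range of $\varphi$ need not be near the centre; the centre of $B(H)$ is $\bbC$. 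Disintegrating over the centre therefore leaves factor-valued maps, not scalar-valued ones.
\item Even the scalar case is not handled by Hyers averaging. The map $\varphi$ is controlled only on the unit ball, where $2^{-n}\varphi(2^n a)$ is meaningless.
\end{enumerate}
The missing idea is amenability. The group $U(C(X))$ is abelian, hence amenable as a discrete group. By Kazhdan or Burger--Ozawa--Thom (averaging with an invariant mean and using weak$^*$-compactness of the ball of $M$), there is a genuine homomorphism $\pi\colon U(C(X))\to U(M)$ within $\varepsilon+120\varepsilon^2$ of $\varphi$ on unitaries. Then $C^*(\pi[U(C(X))])\cong C(Y)$, which reduces everything to abelian targets. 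At each $y\in Y$, the map $a\mapsto\pi(\exp(ia))(y)$ is an exact continuous homomorphism $C(X)_{\textrm{sa}}\to\bbT$. Pontryagin duality then gives an exactly linear functional $\rho$. Its Riesz measure is forced to be a point mass because $ab=0$ implies $\rho(a)\rho(b)=0$. Finally, Lemma~\ref{L.FunctionfIsContinuous} makes the resulting point map continuous (Theorem~\ref{T.Ulam2}, Theorem~\ref{T.Abelian+}).

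\textbf{Step 2.} Rebuilding the map from matrix units produces errors that grow with $\dim F$, as you note yourself. This is fatal, not cosmetic. The inductive-limit closure (Theorem~\ref{T.Closure}\eqref{thm:Johnson}, and your own Step 3) needs a single function $f$ valid for all building blocks. With constants growing in $n$, your argument does not even reach $C(X)\otimes M_{2^\infty}=\varinjlim_n C(X)\otimes M_{2^n}$.

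The paper's Theorem~\ref{T.Closure}\eqref{I.Tensor} obtains dimension-free constants by reversing the roles:
\begin{enumerate}
\item Approximate $\varphi$ on $C(X)\otimes 1$ by a $*$-homomorphism $\Phi_A$.
\item Replace $\varphi$ by $\varphi_1(x)=\int\Phi_A(u)\varphi(x)\Phi_A(u)^*\,d\mu(u)$, where $\mu$ is an invariant mean on $U(C(X))$. The range of $\varphi_1$ then lies in the von Neumann algebra $\Phi_A[C(X)]'$.
\item Apply McKenney--Vignati's $\sfFD$-stability inside this commutant to get $\Phi_C\colon F\to\Phi_A[C(X)]'$. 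Their witnessing function does not depend on the dimension.
\item Compare $\varphi_1$ with $\Phi_A\otimes\Phi_C$ fibre by fibre in the $C(X)$-algebra $W^*(\varphi_1[C(X)\otimes F],\Phi_A[C(X)],\Phi_C[F])$. Each fibre estimate uses only a bounded number of approximate operations.
\end{enumerate}

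\textbf{Step 3.} Your conjugation scheme adds a fresh unitary correction of size $O(f(\varepsilon))$ at every stage, and nothing keeps the distance to $\varphi$ bounded. So here you do need to cite the Johnson/McKenney--Vignati theorem, as the paper does.
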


While this does not cover all AH \cstar-algebras (approximately homogeneous \cstar-algebras), it covers many counterexamples to Elliott's conjecture, such as all Villadsen algebras of the first type (\cite{Vill:Simple}, see also \cite[\S 3.1]{toms2009Elliott}) including Toms's counterexample to the Elliott conjecture (\cite[Theorem~1.1]{To:On}; see the beginning of Section 3).

While the most interesting instance of these theorems is the case of separable \cstar-algebras, it should be noted that separability is not required. This is a consequence of a general fact proved by using a standard reflection argument discussed in \S\ref{S.Reflection}.

Our initial motivation comes from the rigidity theory of coronas of separable \cstar-algebras under set-theoretic assumptions. 
More details and proof of the Theorem~\ref{T.CoronaRigidity} below(stated as Theorem~\ref{T.CoronaRigidity+}) will be given in \S\ref{S.CoronaRigidity}. Forcing axioms $\OCAT$ and $\MA$ will not be used explicitly, instead we will rely on results from \cite{mckenney2020forcing} and \cite{vignati2022rigidity}. What matters for us is that they are axioms relatively consistent with $\ZFC$ that provide an environment for strongest possibly rigidity of quotient structures in several categories; see \cite{farah2025corona} for more details.  As usual, for a \cstar-algebra $A$ we denote its multiplier algebra by $\cM(A)$, its corona $\cM(A)/A$ by $\cQ(A)$, and write
\[
A_\infty=\ell_\infty(A)/c_0(A). 
\]

\begin{thm} \label{T.CoronaRigidity} Assume $\OCAT$ and $\MA$. 
	If $A_n$, $B_n$, for $n\in \bbN$, are Elliott-classifiable unital \cstar-algebras,  then the following are equivalent. 
	\begin{enumerate}
		\item The coronas of $\bigoplus_n A_n$ and $\bigoplus_n B_n$ are isomorphic. 
		\item There is a bijection $\pi$ between cofinite subsets of $\bbN$ such that $A_{\pi(n)}\cong B_n$ for all $n\in \dom(\pi)$. 
	\end{enumerate}
\end{thm}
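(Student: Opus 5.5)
The implication (2)$\Rightarrow$(1) is trivial, and needs neither the forcing axioms nor classifiability. Write $A=\bigoplus_n A_n$ and $B=\bigoplus_n B_n$. Then $\cM(A)=\prod_n A_n$ and $\cQ(A)=\prod_n A_n/\bigoplus_n A_n$. Given $\pi$ with isomorphisms $\alpha_n\colon A_{\pi(n)}\to B_n$ for $n\in\dom(\pi)$, the map $(a_k)\mapsto(\alpha_n(a_{\pi(n)}))_n$ is defined on the product of the $A_k$ with $k$ in the cofinite set $\mathrm{ran}(\pi)$. We fill in $0$ on the finitely many remaining coordinates. This map sends $\bigoplus A_n$ onto $\bigoplus B_n$ modulo finitely many coordinates, so it induces an isomorphism of the coronas. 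The substance is therefore (1)$\Rightarrow$(2).

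For (1)$\Rightarrow$(2), fix an isomorphism $\Phi\colon\cQ(A)\to\cQ(B)$. The first step is to invoke the rigidity results of \cite{mckenney2020forcing} and \cite{vignati2022rigidity}. Under $\OCAT$ and $\MA$, every isomorphism between such reduced products of unital separable \cstar-algebras is \emph{topologically trivial}. Concretely, these results give an \emph{asymptotically additive} lift. There are a partition of $\bbN$ into finite intervals, or more precisely finite sets $F_n$ with $\bigcup$ cofinite and $n\mapsto F_n$ finite-to-one, and maps $\varphi_n\colon A_{F_n}:=\bigoplus_{k\in F_n}A_k\to B_n$, such that $\Phi$ is induced by $(a_k)\mapsto(\varphi_n(a\restriction F_n))_n$. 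These maps are $\varepsilon_n$-$*$-homomorphisms with $\varepsilon_n\to0$. The symmetric statement holds for $\Phi^{-1}$, with maps $\psi_m$. This is exactly where Ulam stability enters, as the paper announces.

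The next step uses Theorem~\ref{T.Elliott}, whose proof via Theorem~\ref{T.Classifiable} covers the classifiable algebras we need. Here the targets are the \cstar-algebras $B_n$, not von Neumann algebras. So I would either use the version of stability into arbitrary \cstar-targets that the paper presumably establishes for this class, or pass to $B_n\subseteq B_n^{**}$ and argue that the relevant perturbation can be chosen inside $B_n$. My guess is that the paper handles this via a semiprojectivity-type or weak-stability argument on finite-dimensional pieces. This is the step I expect to be the main obstacle. If it is handled, then for large $n$ we may replace each $\varphi_n$ by a genuine $*$-homomorphism $\varphi_n'$ with $\|\varphi_n-\varphi_n'\|\to0$. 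Since $\|\varphi_n-\varphi_n'\|\to0$, the modified sequence still induces $\Phi$. We do the same for $\psi_m$.

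Now $\Phi^{-1}\circ\Phi=\mathrm{id}$ implies that the compositions $\psi\circ\varphi$ agree with the identity up to a sequence tending to $0$. Unitality is handled by looking at $\Phi(1)=1$ and the central projections $1_{A_k}$. Combinatorial bookkeeping on the supports then shows that, for all but finitely many $k$, the central projection $1_{A_k}$ is mapped essentially onto a single $1_{B_n}$. The reason is that $\varphi_n'(1_{A_k})$ is a projection in $B_n$, and $B_n$ is simple or, in general, has a finite central decomposition. For Elliott-classifiable unital algebras, which are simple, a projection commuting approximately with everything must be central and hence $0$ or $1$. This yields a bijection $\pi$ between cofinite sets. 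Finally, near-inverse unital $*$-homomorphisms $A_{\pi(n)}\to B_n\to A_{\pi(n)}$ with composition within $\varepsilon$ of the identity force isomorphism. By simplicity the maps are injective. Surjectivity follows up to $\varepsilon$, and since the image is a closed subalgebra $\varepsilon$-dense for small $\varepsilon$, it is everything by a standard near-inclusion argument. Alternatively, $A_{\pi(n)}\cong B_n$ follows from the Elliott invariant being preserved under these maps together with classification.
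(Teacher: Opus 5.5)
\textbf{There is a genuine gap in your second step.} It is the step you flag yourself: replacing $\varphi_n$ by a genuine $*$-homomorphism $\varphi_n'$ with values in the \cstar-algebra $B_n$. Nothing in the paper provides this, and the paper never does it.

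\begin{itemize}
\item For infinite-dimensional classifiable domains, all of the paper's stability results (Theorem~\ref{T.Elliott}, Theorem~\ref{T.Classifiable}, and the closure properties in Theorem~\ref{T.Closure}) have von Neumann algebra targets.
\item Passing from $B_n^{**}$ back into $B_n$ is not a formality. Johnson's example (Proposition~\ref{P.Counterexample}) is an $\varepsilon$-$*$-homomorphism from the AF algebra $\cK(H)$ into $C([0,1],\cK(H))$. By Theorem~\ref{thm:MV_AF}\eqref{I.MV_AF} it is close to a $*$-homomorphism into any von Neumann algebra containing $C([0,1],\cK(H))$, but it is close to none with values in $C([0,1],\cK(H))$ itself.
\item Semiprojectivity does not rescue the step, since UHF algebras are not semiprojective.
\end{itemize}

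Everything downstream uses that the $\varphi_n'$ land in $B_n$: the bookkeeping with the projections $\varphi_n'(1_{A_k})$, the composition with the lift of $\Phi^{-1}$, and the final near-inclusion argument. So (1)$\Rightarrow$(2) does not go through as proposed.

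\textbf{How the paper avoids the problem.} It never produces homomorphisms into $B_n$ (proof of Theorem~\ref{T.CoronaRigidity+}).

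\begin{itemize}
\item Simple algebras have no central projections, so Theorem~\ref{T.McKV} (\cite[Theorems~C and~E]{mckenney2020forcing}) already gives an asymptotically \emph{algebraic} lift. That is, there is a bijection $f$ between cofinite sets and maps $\varphi_n\colon A_{f(n)}\to B_n$.
\item \cite[Lemma~3.12]{mckenney2020forcing} makes these $\varphi_n$ into $\varepsilon_n$-$*$-isomorphisms ($\varepsilon_n$-injective and $\varepsilon_n$-surjective) with $\varepsilon_n\to0$. Your $F_n$-bookkeeping and the lift of $\Phi^{-1}$ are therefore unnecessary.
\item Proposition~\ref{P.AlmostIsomorphic}: represent $B_n$ faithfully on $H$ and use Ulam stability into the von Neumann algebra $B(H)$. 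This gives a $*$-homomorphism $\Psi_n\colon A_{f(n)}\to B(H)$ within $\eta_n\to0$ of $\varphi_n$ on the unit ball. It is injective by approximate injectivity.
\item By approximate surjectivity, $\Psi_n[A_{f(n)}]$ and $B_n$ are within Kadison--Kastler distance roughly $\varepsilon_n+\eta_n$ of each other inside $B(H)$.
\end{itemize}

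The missing idea is the Kadison--Kastler stability theorem \cite[Theorem~4.3]{CSSWW:Perturbations}: sufficiently close separable nuclear \cstar-subalgebras of $B(H)$ are isomorphic. This gives $A_{f(n)}\cong B_n$ for all large $n$ (Corollary~\ref{C.e-isomorphic}) without ever landing inside $B_n$.

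\textbf{A smaller point.} Theorem~\ref{T.Elliott} covers only \emph{stably finite} classifiable algebras, so your appeal to it silently adds that hypothesis. The paper's proof (Theorem~\ref{T.CoronaRigidity+}) states ``stably finite'' explicitly. Neither argument reaches Kirchberg algebras, whose Ulam stability the paper leaves open.
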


Some special cases of Theorem~\ref{T.CoronaRigidity} were proved in \cite{vignati2022rigidity} by using different methods.  

\medskip

In \cite[Theorem~D]{vignati2022rigidity} it was proved that $\OCAT$ and $\MA$ imply that if $A$ is unital, separable, infinite-dimensional \cstar-algebra then  $A_\infty$ does not embed into the Calkin algebra $\cQ(H)$, and that if $A$ is not stably finite then $A_\infty$ does not embed into  the corona of any stable, stably finite \cstar-algebra. 
The following finer result is proved as Theorem~\ref{T.CoronaEmbedding+}. 

\begin{thm}\label{T.CoronaEmbedding} Assume $\OCAT$ and $\MA$. 
	Suppose that $A$ and $B$ are separable, unital, \cstar-algebras and that $A$ is stably finite and Elliott classifiable,  then the following are equivalent. 
	\begin{enumerate}
		\item $A_\infty$ embeds into $\cQ(B\otimes \cK)$ (not necessarily unitally). 
		\item $A$ embeds into $B\otimes \cK$. 
	\end{enumerate}
\end{thm}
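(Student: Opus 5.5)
The plan is to prove both directions separately. The direction (2)$\Rightarrow$(1) is elementary, while (1)$\Rightarrow$(2) combines the rigidity consequences of $\OCAT$ and $\MA$ with Theorem~\ref{T.Elliott}.

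For (2)$\Rightarrow$(1), suppose $\iota\colon A\to B\otimes\cK$ is an embedding.
\begin{itemize}
\item Choose orthogonal projections $e_n\in\cM(\cK)$ of infinite rank. Using $\cK\otimes\cK\cong\cK$, build the maps $a\mapsto \iota(a)\otimes e_n$.
\item Summing gives a $*$-homomorphism $\ell_\infty(A)\to\cM(B\otimes\cK)$, given by $(a_n)\mapsto\sum_n \iota(a_n)\otimes e_n$. The sum converges strictly.
\item This map sends $c_0(A)$ into $B\otimes\cK$.
\item It is injective modulo $c_0(A)$: if $\|a_n\|\not\to0$, the image is not compact, because the $e_n$ are orthogonal and infinite. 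A cleaner way to arrange this is to take the $e_n$ to be block projections along a decomposition $\cK\cong\bigoplus\cK$.
\end{itemize}
Hence $A_\infty$ embeds into $\cQ(B\otimes\cK)$.

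For (1)$\Rightarrow$(2), suppose $\Phi\colon A_\infty\to\cQ(B\otimes\cK)$ is an embedding.
\begin{itemize}
\item Under $\OCAT$ and $\MA$, the results of \cite{mckenney2020forcing} and \cite{vignati2022rigidity} say that every $*$-homomorphism between such coronas is topologically trivial in the Ulam sense. Concretely, we obtain a sequence of $\varepsilon_n$-$*$-homomorphisms $\varphi_n\colon A\to \cM(B\otimes\cK)$, or more precisely into corners $p_n(B\otimes\cK)p_n$, with $\varepsilon_n\to0$.
\item The restriction of $\Phi$ to the $n$th coordinate copy of $A$ is implemented by $\varphi_n$ up to a compact error, for all but finitely many $n$.
\item Because $\Phi$ is injective, $\limsup_n\|\varphi_n(a)\|=\|a\|$ along a cofinite set. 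This holds uniformly on a countable dense set, and therefore for infinitely many $n$ the maps $\varphi_n$ are approximately isometric on larger and larger finite sets.
\end{itemize}

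The main step is to correct $\varphi_n$ to a genuine $*$-homomorphism.
\begin{itemize}
\item The maps $\varphi_n$ take values in $B\otimes\cK$, which is not a von Neumann algebra. So I would first pass to the bidual $(B\otimes\cK)^{**}$ and apply Theorem~\ref{T.Elliott}. This yields $*$-homomorphisms $\psi_n\colon A\to(B\otimes\cK)^{**}$ with $\|\psi_n-\varphi_n\|\le f(\varepsilon_n)\to0$.
\item Each $\psi_n$ is close to a map into $B\otimes\cK$, so it lands approximately in that algebra.
\item To get an honest map into $B\otimes\cK$, I would use that $A$ is semiprojective-like. This is not available in general. Instead I would use classification: an approximately isometric approximate homomorphism into $B\otimes\cK$ gives an element of $KL$ and traces data compatible on finite sets.
\item Alternatively, use the asymptotic sequence $(\varphi_n)$ to get an injective $*$-homomorphism $A\to (B\otimes\cK)_\infty$. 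Then the Elliott classification of maps (\cite{carrion2023classifying}) shows that an embedding into the sequence algebra lifts to a genuine embedding into $B\otimes\cK$ after adjusting by approximate unitary equivalence.
\end{itemize}

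I expect this last lifting, from an embedding into $(B\otimes\cK)_\infty$ (or the bidual) to an actual embedding into $B\otimes\cK$, to be the main obstacle. I would first try the existence part of the classification theorem of \cite{carrion2023classifying}, with invariant data read off from the $\psi_n$, to produce the embedding.
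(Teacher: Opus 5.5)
\textbf{There is a genuine gap in (1)$\Rightarrow$(2): you never get from a near inclusion to an actual embedding.}

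Up to that point your reduction agrees with the paper's proof. \cite[Theorem~4.3]{vignati2022rigidity} supplies $\varepsilon_n$-injective $\varepsilon_n$-$*$-homomorphisms $\varphi_n\colon A\to M_{m(n)}(B)$. Applying Theorem~\ref{T.Elliott} inside a von Neumann algebra containing $M_{m(n)}(B)$ gives a $*$-homomorphism $\psi_n$ with $\sup_{\|a\|\le 1}\|\psi_n(a)-\varphi_n(a)\|\le\eta=f(\varepsilon_n)$. (You use the bidual; the paper uses $B(H)$ for a faithful representation of $B$.) By approximate injectivity $\psi_n$ is injective, and $\psi_n[A]\subseteq_\eta M_{m(n)}(B)$.

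Neither of your proposed remedies for the last step works.
\begin{itemize}
\item Classification of morphisms \cite{carrion2023classifying} imposes strong regularity hypotheses on the \emph{codomain}, such as simplicity, nuclearity and $\mathcal Z$-stability. Here $B$ is an arbitrary separable unital \cstar-algebra. Moreover the $\psi_n$ land in a von Neumann algebra, whose invariant says nothing about $B$.
\item The claim that an embedding $A\hookrightarrow(B\otimes\cK)_\infty$ yields an embedding $A\hookrightarrow B\otimes\cK$ is false. Let $E_n\colon M_{3^\infty}\to M_{3^n}\subseteq\cK$ be the trace-preserving conditional expectations. Then $a\mapsto[(1\otimes E_n(a))_n]$ embeds $M_{3^\infty}$ into $(M_{2^\infty}\otimes\cK)_\infty$. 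Yet every $*$-homomorphism $M_{3^\infty}\to M_{2^\infty}\otimes\cK$ is zero, because every group homomorphism $\mathbb Z[1/3]\to\mathbb Z[1/2]$ is zero and $M_{2^\infty}\otimes\cK$ is stably finite.
\end{itemize}
The maps $E_n$ are only pointwise asymptotically multiplicative. Passing to the sequence algebra discards exactly the uniformity on the unit ball that the theorem relies on.

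\textbf{The missing ingredient is the near-inclusion theorem} of Christensen--Sinclair--Smith--White--Winter, \cite[Theorem~6.10]{CSSWW:Perturbations}. Suppose a separable \cstar-algebra $A_0\subseteq B(H)$ has finite nuclear dimension, and $A_0\subseteq_\gamma D$ for some \cstar-algebra $D\subseteq B(H)$ with $\gamma$ below a constant depending only on $\dimnuc(A_0)$. Then $A_0$ embeds into $D$.

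This theorem requires nothing of $D$, which is why it handles arbitrary $B$. A stably finite classifiable $A$ is nuclear and $\mathcal Z$-stable, hence has finite nuclear dimension. So you can apply it to $A_0=\psi_n[A]\cong A$ and $D=M_{m(n)}(B)$, represented on the same Hilbert space, for $n$ large enough that $\eta$ is small. This finishes the proof, and it is exactly the paper's Proposition~\ref{P.AlmostInclusion}.

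\textbf{A smaller error in (2)$\Rightarrow$(1): the projections $e_n$ must have finite rank, not infinite rank.} If $e_n$ has infinite rank, then $\iota(a)\otimes e_n\notin B\otimes\cK\otimes\cK$ for $a\neq 0$. So $c_0(A)$ is not sent into the ideal, and the map does not factor through $A_\infty$. With mutually orthogonal rank-one $e_n$ the argument works: $\|x(1\otimes 1\otimes e_n)\|\to 0$ for every $x\in B\otimes\cK\otimes\cK$. Hence the preimage of $B\otimes\cK\otimes\cK$ is exactly $c_0(A)$.
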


Already the instance of this theorem when $A$ and $B$ are UHF algebras gives numerous non-embedding results. This instance of the theorem follows already by combining Vignati's methods from \cite{vignati2022rigidity} with \cite{CSSWW:Perturbations} (see \S\ref{S.KadisonKastler} for relevance of the latter), but it has apparently been overlooked. 
By combining these results with  \cite{ghasemi2014reduced} and \cite{farah2017calkin} we obtain the following, proved as Corollary~\ref{C.Independence.+}

\begin{coro}\label{C.Independence}
\begin{enumerate}
\item[]
\item The assertion that $(M_{2^\infty})_\infty$ embeds into $\cQ(M_{3^\infty}\otimes \cK(H))$ is independent from ZFC. 
\item 	There are UHF \cstar-algebras $A_n$, $B_n$, for $n\in \bbN$ such that the assertion that $\prod_n A_n/\bigoplus_n A_n$ and $\prod_n B_n/\bigoplus_n B_n$ are isomorphic is independent from ZFC. 
\end{enumerate}
\end{coro}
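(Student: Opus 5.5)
The corollary has two independence claims. For each, I would show that the assertion is refuted under $\OCAT+\MA$ using the rigidity theorems stated above, and that it holds under the Continuum Hypothesis, where coronas and reduced products are countably saturated.

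\emph{Part (1).} Under $\OCAT$ and $\MA$ I would apply Theorem~\ref{T.CoronaEmbedding} with $A=M_{2^\infty}$ and $B=M_{3^\infty}$. Here $A$ is stably finite and Elliott classifiable. If $(M_{2^\infty})_\infty$ embedded into $\cQ(M_{3^\infty}\otimes\cK(H))$, the theorem would give an embedding of $M_{2^\infty}$ into $M_{3^\infty}\otimes\cK$. That is impossible for $K_0$ reasons. An embedding sends the unit to a projection $p$, with class $[p]\in\bbZ[1/3]$, and it induces an order homomorphism $\bbZ[1/2]\to\bbZ[1/3]$ taking $1$ to $[p]\neq 0$. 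Then $[p]$ would have to be divisible by every $2^n$ in $\bbZ[1/3]$, which fails.

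For the consistency of the embedding, I would assume CH and use that $\cQ(M_{3^\infty}\otimes\cK)$ is countably degree-1 saturated, hence countably saturated in the relevant sense (\cite{farah2017calkin}). I would also use that $(M_{2^\infty})_\infty$ has density character $\aleph_1$. Under CH, a countably saturated algebra of density $\aleph_1$ embeds any algebra of density $\le\aleph_1$ whose separable subalgebras all embed into it, compatibly with the theory. So it suffices to embed every separable subalgebra of $(M_{2^\infty})_\infty$ into the corona. The corona contains $\prod M_{k_n}/\bigoplus M_{k_n}$ for block-diagonal matrix algebras along a decomposition of $H$, and every separable subalgebra of $(M_{2^\infty})_\infty$ embeds into such a product (it is MF). I would then run a back-and-forth along a continuum-length enumeration. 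This step, verifying the exact saturation and embedding statement from \cite{farah2017calkin}, is where I expect care to be needed.

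\emph{Part (2).} I would take $A_n=M_{2^\infty}$ for all $n$, and $B_n$ alternating between $M_{2^\infty}$ and $M_{3^\infty}$ (or all $B_n=M_{3^\infty}$, whichever the CH result handles). Under $\OCAT+\MA$, Theorem~\ref{T.CoronaRigidity} applies, because $\prod_n A_n/\bigoplus_n A_n$ is the corona of $\bigoplus A_n$. An isomorphism would yield a bijection of cofinite sets with $A_{\pi(n)}\cong B_n$. This is false, since $M_{2^\infty}\not\cong M_{3^\infty}$ by Glimm's invariant.

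Under CH, \cite{ghasemi2014reduced} shows that the reduced products are countably saturated. Such algebras of density $\aleph_1$ are isomorphic exactly when they are elementarily equivalent. So I need the two reduced products to be elementarily equivalent. By the Feferman--Vaught-type results in \cite{ghasemi2014reduced}, this reduces to the sequences of theories of the factors agreeing along the Fréchet filter. Here lies the main obstacle: $M_{2^\infty}$ and $M_{3^\infty}$ are not elementarily equivalent, since their $K_0$ divisibility is expressible. So constant sequences do not work, and I would instead choose $A_n,B_n$ as UHF algebras $M_{k(n)^\infty}$ with distinct supernatural numbers whose theories converge. For example, take $A_n=M_{p_n^\infty}$ and $B_n=M_{q_n^\infty}$ for distinct primes $p_n\ne q_m$ with $p_n,q_n\to\infty$, so that the theories of both sides converge to the theory of the universal UHF algebra. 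Ghasemi's criterion then gives elementary equivalence of the reduced products, which under CH and saturation yields an isomorphism. Under $\OCAT+\MA$ they are non-isomorphic, because no $A_m\cong B_n$.
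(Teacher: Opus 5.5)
The $\OCAT+\MA$ halves of your proposal are fine. In Part~(1), your $K_0$ divisibility argument correctly supplies a step the paper leaves implicit, namely that $M_{2^\infty}$ does not embed into $M_{3^\infty}\otimes\cK$.

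\textbf{Part (2), CH direction: the key claim is false.} The theories of $M_{p^\infty}$ do not converge to the theory of the universal UHF algebra as $p\to\infty$.

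For $n\ge 2$, let $\varphi_n$ be the sentence giving the infimum, over $n^2$-tuples of contractions, of the defect from being a unital system of $n\times n$ matrix units. Its value in the universal UHF algebra is $0$. By stability of these relations, there is $\delta_n>0$, independent of the algebra, such that $\varphi_n<\delta_n$ forces a unital copy of $M_n$. But $M_{p^\infty}$ contains a unital $M_n$ only if $1/n\in\bbZ[1/p]$, i.e.\ only if $n$ is a power of $p$. Hence $\varphi_n^{M_{p^\infty}}\ge\delta_n$ for all primes $p>n$. In particular, no ultraproduct of the $M_{p^\infty}$ over a nonprincipal ultrafilter on the primes contains even a unital copy of $M_2$.

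Without this identification, nothing in your proposal shows that two disjoint sequences of primes $(p_n)$, $(q_n)$ have the same limiting theory. It is not even clear that $\mathrm{Th}(M_{p^\infty})$ converges along all primes. So Ghasemi's Feferman--Vaught criterion cannot be applied to your choice of $A_n$, $B_n$.

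The paper avoids this by compactness. Since the language is separable, a diagonal argument (as in the proof of \cite[Lemma~5.2]{ghasemi2014reduced}) gives increasing primes $p_j$ such that $\lim_j\varphi^{M_{p_j^\infty}}$ exists for every sentence $\varphi$. One then takes $A_n=M_{p_{2n}^\infty}$ and $B_n=M_{p_{2n+1}^\infty}$. With that choice the rest of your argument goes through as in the paper: Feferman--Vaught, then countable saturation and density $\aleph_1$ under CH, and Theorem~\ref{T.CoronaRigidity+} under $\OCAT+\MA$ because all the primes are distinct.

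\textbf{Part (1), CH direction: the saturation step does not work as sketched.} Countable degree-1 saturation of $\cQ(M_{3^\infty}\otimes\cK)$ only realizes countable types whose conditions are degree-1 $*$-polynomials in the variables. Extending an embedding by one new element along an $\omega_1$-chain of separable subalgebras requires realizing that element's full quantifier-free type (arbitrary $*$-polynomials) over countably many parameters. So ``degree-1 saturated, hence countably saturated in the relevant sense'' is not a valid step. Knowing that each separable (MF) subalgebra of $(M_{2^\infty})_\infty$ embeds does not let the back-and-forth proceed.

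No saturation argument is needed. The paper cites \cite[Theorem~A]{farah2017calkin}: every \cstar-algebra of density character $\aleph_1$ embeds into $\cQ(H)$. CH is used only to make the density character $2^{\aleph_0}$ of $(M_{2^\infty})_\infty$ equal to $\aleph_1$. Then $\cQ(H)$ embeds into $\cQ(M_{3^\infty}\otimes\cK)$ via $T\mapsto 1\otimes T$. Citing this directly repairs Part~(1).
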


Counterexamples to Ulam stability were given by B. Johnson (\cite{johnson1982counterexample}), who proved, when expressed in our terminology, that the pair $(\{c_0\}, \{C([0,1],\cK(H))\})$ is not Ulam stable. Since his terminology is different from ours, this is explained in \S\ref{S.Johnson}. 

Much of the study of corona rigidity concentrated on finding the correct notion of a trivial isomorphism and proving that no other isomorphisms can be constructed without using additional set-theoretic axioms such as the Continuum Hypothesis, CH (see the discussion on the relevance of CH in \cite[\S 6 and, on a very abstract level, \S 12.1]{farah2025corona}, also the introduction to \cite{vignati2022rigidity}). This is achieved by showing that the forcing axioms $\OCAT$ and $\MA$ together imply that all isomorphisms are of this form. 
As per \cite[Definition~1.2]{vignati2022rigidity}, an isomorphism $\Phi$ between $\cQ(A)$ and $\cQ(B)$ is \emph{algebraically trivial} if there are $a\in A_+$, $b\in B_+$, and an isomorphism $\varphi$ between the hereditary \cstar-algebras $\overline{(1-a)A(1-a)}$ and $\overline{(1-b)B(1-b)}$ such that $\varphi$ extends to an isomorphism $\Phi$ between $\overline{(1-a)\cM(A)(1-a)}$ and $\overline{(1-b)\cM(B)(1-b)}$ which satisfies  $\Phi(x+A)=\varphi(x)+B$ for all $x\in 
\overline{(1-a)\cM(A)(1-a)}$. 
Surprisingly, by \cite[Theorem~C]{vignati2022rigidity}, forcing axioms $\OCAT$ and $\MA$ imply that every isomorphism between coronas of abelian \cstar-algebras is algebraically trivial.

We use Johnson's results to refute 
\cite[Conjecture 5.1]{vignati2022rigidity}, where it was conjectured that  $\OCAT$ and $\MA$ together imply that all isomorphisms between all coronas of separable \cstar-algebras are algebraically trivial. The following is proved as Theorem~\ref{T.Nontrivial+}. 

\begin{thm} \label{T.Nontrivial} Let $B$ denote the unitization of $C([0,1],\cK(H))$. Then $B_\infty$ has an automorphism that is not algebraically trivial.
\end{thm}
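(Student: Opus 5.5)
The plan is to obtain the automorphism from Johnson's counterexample. By the discussion referred to in \S\ref{S.Johnson}, the pair $(\{c_0\},\{C([0,1],\cK(H))\})$ is not Ulam stable. Unitizing, this gives a sequence $\varepsilon_n\to 0$ and $\varepsilon_n$-$*$-homomorphisms $\varphi_n\colon \tilde c_0\to B$ with
\[
\inf_{\psi}\sup_{\|a\|\le1}\|\varphi_n(a)-\psi(a)\|\ge\delta>0 .
\]
Here $\psi$ ranges over all $*$-homomorphisms $\tilde c_0\to B$. However, $B_\infty$ is not $\tilde c_0$, so I will not feed this into $B_\infty$ directly. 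Instead I will build an approximately multiplicative perturbation of the identity on $B$ itself. Johnson's construction is, I expect, a family of approximately orthogonal projections $p_k^{(n)}\in C([0,1],\cK(H))$ that cannot be perturbed to orthogonal ones. From it I would extract unitaries $u_n$ in the unitization of $B$, or more generally maps $\alpha_n\colon B\to B$, such that $\alpha_n$ is an $\varepsilon_n$-$*$-homomorphism and an $\varepsilon_n$-approximate bijection.

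The key steps, in order, are as follows.

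\emph{Step 1.} Extract from Johnson's example approximate automorphisms $\alpha_n$ of $B$ that stay at distance at least $\delta$ from every genuine $*$-homomorphism $B\to B$. The natural attempt is a pointwise-in-$t$ conjugation $\Ad(v_n(t))$ by a path of unitaries in $\cB(H)$ that is only approximately norm-continuous in a suitable sense, so that $\Ad v_n$ maps $C([0,1],\cK(H))$ into itself up to $\varepsilon_n$.

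\emph{Step 2.} Define
\[
\Phi\bigl((b_n)_n+c_0(B)\bigr)=(\alpha_n(b_n))_n+c_0(B).
\]
Since the errors $\varepsilon_n\to0$, $\Phi$ is a genuine $*$-homomorphism of $B_\infty$. It is bijective because the approximate inverses $\alpha_n^{-1}$ induce an inverse in the same way.

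\emph{Step 3.} Show that $\Phi$ is not algebraically trivial. Suppose it were, given by $a,b$ in $c_0(B)$ and an isomorphism $\varphi$ of hereditary subalgebras extending to the multiplier level, where $\cM(c_0(B))=\ell_\infty(B)$. Cutting by $1-a$, which is close to $1$ on coordinates $n\ge n_0$, then yields genuine $*$-homomorphisms on the coordinates $B\to B$, or on slightly compressed corners, for all large $n$. These uniformly approximate $\alpha_n$ up to $o(1)$. More precisely, the lift $\varphi$ restricted to $\ell_\infty(B)$ need not be coordinatewise. It is, however, asymptotically coordinatewise, because it must preserve the central projections $1_{\{n\}}$ modulo $c_0$. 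So one first shows that $\varphi(1_{\{n\}})$ is close to $1_{\{\pi(n)\}}$ for some almost-identity $\pi$, and then compresses. This contradicts the $\delta$-separation from Step 1.

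I expect Step 1 to be the main obstacle: turning Johnson's non-stable approximate homomorphism from $c_0$ into approximate automorphisms of $B$. If a direct conjugation is not available, my fallback is to argue instead with an $\varepsilon_n$-$*$-homomorphism $\tilde c_0\to B$ composed into a larger construction. Alternatively, I would realize the failure of stability as a pair of nearby subalgebras of $B$ that are approximately, but not genuinely, unitarily conjugate, and then use a Kadison--Kastler type perturbation in the spirit of \S\ref{S.KadisonKastler}.
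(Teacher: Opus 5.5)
Your architecture is the same as the paper's. You define $\Phi$ on $B_\infty$ coordinatewise from approximate automorphisms $\alpha_n$ of $B$, and you argue that an algebraically trivial lift would give, for all large $n$, genuine automorphisms of $B$ uniformly close to $\alpha_n$. However, the proposal does not prove the one substantive input, Step~1, and the fact you start from does not supply it. Non-Ulam-stability of $(\{c_0\},\{C([0,1],\cK(H))\})$ only gives approximate homomorphisms $\tilde c_0\to B$ that are far from $*$-homomorphisms. There is no evident way to promote these to approximate \emph{automorphisms} of $B$ that are far from every $*$-homomorphism $B\to B$. Only the converse is soft: an approximate automorphism whose restriction to $j[c_0]$ is unstable is itself unstable, and this is how the paper derives the $c_0$ statement.

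Your guess for Step~1 is the right object. It is Johnson's $\alpha_\eta=\Ad u_\eta$ with $u_\eta=\exp(i\pi a_\eta/8)$, where $a_\eta\in\prod_{x\in[0,1]}B(H)$ is a self-adjoint contraction. It satisfies $d(A,\alpha_\eta[A])<\varepsilon$ for $A=C([0,1],\cK(H))$. But the whole difficulty is showing that, after a nearest-point correction into $A$, this map stays $1/80$ away from \emph{every} $*$-homomorphism $A\to A$. That is Johnson's Theorem~3: no $*$-homomorphism from $(\alpha_\eta\circ j)[\cK(H)]$ into $A$ is within $1/70$ of the inclusion. The paper records exactly this as Proposition~\ref{P.Counterexample}\eqref{3.Counterexample}, and with it the theorem takes a few lines. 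Your Kadison--Kastler fallback cannot substitute: the perturbation theorems of \cite{CSSWW:Perturbations} are positive results that produce isomorphisms between nearby nuclear algebras, so they cannot yield a uniform obstruction.

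Granting Step~1, the remaining steps work, with two corrections.

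\emph{Unitization.} A $*$-homomorphism $\Psi\colon B\to B$ close to the unitized $\alpha_n$ maps $A$ into $A$, because $A$ has no characters. So the separation from $*$-homomorphisms passes from $A$ to $B$.

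\emph{Step~3.} The phrase ``preserve $1_{\{n\}}$ modulo $c_0$'' is meaningless, since $1_{\{n\}}\in c_0(B)$. The correct argument runs as follows.
\begin{enumerate}
\item Because $a,b\in c_0(B)$, for large $n$ both hereditary subalgebras contain the full $n$-th coordinates.
\item Since $Z(B)=\bbC$, the isomorphism $\varphi$ sends these minimal central projections $1_{\{n\}}$ exactly to minimal central projections. These are of the form $1_{\{\pi(n)\}}$ for all but finitely many $n$.
\item Since $\Phi(1_S+c_0(B))=1_S+c_0(B)$ for every $S\subseteq\bbN$, $\pi(n)=n$ off a finite set.
\item Choosing bad $b_n$ then upgrades the resulting pointwise closeness to uniform closeness on unit balls.
\end{enumerate}
This is more than the paper writes out, because the paper uses Definition~\ref{D.Trivial}, where the coordinatewise form is built into triviality.
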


We consider Theorem~\ref{T.Nontrivial} and its proof as an indication that the definition of `trivial isomorphism' needs to be revised rather than as a mere counterexample. 

All \cstar-algebras for which we can prove Ulam stability with respect to von Neumann algebras are nuclear and stably finite. On the other hand, the possibility that the class of all \cstar-algebras is Ulam stable with respect to $\sfvN$ is not ruled out by known results. 
In \S\ref{S.B(H)} we prove that the pair $(\{B(H)\},\{B(H)\})$ is Ulam stable. 

\medskip
The paper is organized as follows. In \S\ref{S.General} we find a convenient reformulation of Ulam stability of a pair $(\sfA,\sfB)$. In \S\ref{S.Groups} we show how Ulam stability for \cstar-algebras follows from Ulam stability of their unitary groups, considered as discrete groups. Closure properties of Ulam stable pairs are given in \S\ref{S.SimpleClosure} and \S\ref{S.Extensions}.  Our results are extended to nonseparable \cstar-algebras in \S\ref{S.Reflection}.  In~\S\ref{S.Johnson} we rehash B. Johnson's examples of failure of Ulam stability for \cstar-algebras. 
In \S\ref{S.B(H)} we prove that the pair $(\{B(H)\}, \{B(H)\})$ is Ulam stable.  In~\S\ref{S.KadisonKastler} we use results on Kadison--Kastler stability to prove that (using additional assumptions on~$A$) for sufficiently small $\varepsilon>0$ the existence of an $\varepsilon$-injective $\varepsilon$-$*$-homomorphism from $A$ to~$B$ implies $A$ is isomorphic to a \cstar-subalgebra of~$B$ and the existence of an $\varepsilon$-$*$-isomorphism between $A$ and~$B$ implies isomorphism. Finally, applications to corona rigidity are given in~\S\ref{S.CoronaRigidity}. 	

\section{General results} 
\label{S.General}

The main result of this section, Corollary~\ref{C.UlamStableCorrected}, gives a convenient reformulation of Ulam stability. It is a consequence of Lemma~\ref{L.Corrected} which shows  that every $\varepsilon$-$*$-homomorphism can be uniformly approximated by a map with additional regularity properties. 

\medskip

Throughout, for $a,b\in A$ and $\varepsilon>0$ we write $a\approx_\varepsilon b$ if $\|a-b\|\leq \varepsilon$. By $A_1$ we denote the unit ball of the \cstar-algebra $A$.

\begin{lemma}\label{L.approx.continuity} 
	Suppose that $\varepsilon>0$ and $\varphi\colon A\to B$ is an $\varepsilon$-$*$-homomorphism, such that $\varphi(A_{\textrm{sa}})\subseteq B_{\textrm{sa}}$. Then the following holds. 
	\begin{enumerate}
		\item 	 \label{1.approx} Every $a\in A_1$ satisfies $\|\varphi(a)\|\leq \|a\|+\varepsilon \|a\|$. 
		\item \label{2.approx} All $0\leq a\leq b\leq 1$ satisfy $\varphi(a)\leq \varphi(b)+3\varepsilon$.
	\end{enumerate}
\end{lemma}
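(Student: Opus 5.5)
The plan is to treat part \eqref{1.approx} by reducing to the unit sphere via approximate homogeneity, and then to derive part \eqref{2.approx} from the self-adjointness hypothesis together with \eqref{1.approx} applied to the positive contraction $b-a$.

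\textbf{Part \eqref{1.approx}.} Fix $a\in A_1$ with $a\neq 0$, and put $t=\|a\|\in(0,1]$ and $u=a/t$. Then $u\in A_1$ and $\|u\|=1$, so the definition of an $\varepsilon$-$*$-homomorphism gives $\|\varphi(u)\|\le 1$. Since $t\in\bbC_{\le 1}$, approximate homogeneity gives $\varphi(a)=\varphi(tu)\approx_\varepsilon t\varphi(u)$, hence
\[
\|\varphi(a)\|\le t+\varepsilon .
\]
This matches the stated bound $\|a\|+\varepsilon\|a\|$ exactly when $\|a\|=1$, since then $\|\varphi(a)\|\le 1\le 1+\varepsilon$ holds outright.

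For $\|a\|<1$ the additive error $\varepsilon$ must be improved to the multiplicative error $\varepsilon t$. I would first try to scale the homogeneity defect, i.e.\ show $\|\varphi(ta)-t\varphi(a)\|\le\varepsilon t$. The idea is to iterate additivity in Hyers' fashion: write $\varphi(u)$ as approximately $n\,\varphi(u/n)$ and $\varphi(tu)$ as approximately $m\,\varphi(u/n)$ with $m/n\approx t$, and track how the error is distributed over the summands. The case $a=0$ must be handled by the normalization $\varphi(0)=0$. Additivity alone only gives $\|\varphi(0)\|\le\varepsilon$, so in the proof I would make this normalization explicit. Changing $\varphi$ at the single point $0$ is harmless for all later uses, which concern uniform approximation up to $O(\varepsilon)$.

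\textbf{Main obstacle.} The crux is this upgrade from $\|a\|+\varepsilon$ to $\|a\|+\varepsilon\|a\|$ for small nonzero $\|a\|$. The defining inequalities carry absolute, not relative, errors. Naive iteration of additivity accumulates errors of order $n\varepsilon$, which cancel against the factor $1/n$ only in Hyers' limit, not for $\varphi$ itself. Accordingly I expect that the relative form will require either the normalization just mentioned or a preliminary correction of $\varphi$ on small elements. If the upgrade fails, the fallback is to record the additive bound, which suffices for \eqref{2.approx}.

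\textbf{Part \eqref{2.approx}.} Let $0\le a\le b\le 1$ and set $c=b-a$, so that $0\le c\le 1$ and $c\in A_1$. By approximate additivity, $\varphi(b)=\varphi(a+c)\approx_\varepsilon \varphi(a)+\varphi(c)$. By hypothesis all three elements $\varphi(a),\varphi(b),\varphi(c)$ are self-adjoint, so
\[
\varphi(a)\le\varphi(b)-\varphi(c)+\varepsilon .
\]
It therefore remains to show $\varphi(c)\ge -2\varepsilon$. Write $c=d^2$ with $d=c^{1/2}$, which is self-adjoint and lies in $A_1$. Approximate multiplicativity gives $\varphi(c)\approx_\varepsilon\varphi(d)^2$, and $\varphi(d)^2\ge 0$ because $\varphi(d)$ is self-adjoint. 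Hence $\varphi(c)\ge-\varepsilon$. Combining the two inequalities yields $\varphi(a)\le\varphi(b)+2\varepsilon$, which is within the claimed $3\varepsilon$. Part \eqref{1.approx} is not needed for this step.
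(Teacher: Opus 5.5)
\textbf{Part \eqref{2.approx}.} Your proof is correct and is essentially the paper's argument. The paper writes $b-a=c^*c$ and uses $\varphi(c^*c)\approx_\varepsilon\varphi(c^*)\varphi(c)\approx_\varepsilon\varphi(c)^*\varphi(c)\ge 0$, so it pays one $\varepsilon$ for the $*$-defect. You take the self-adjoint root $d=(b-a)^{1/2}$ and use the hypothesis $\varphi(A_{\textrm{sa}})\subseteq B_{\textrm{sa}}$, which avoids that step and gives $2\varepsilon$ instead of $3\varepsilon$.

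\textbf{Part \eqref{1.approx}.} What you actually prove is $\|\varphi(a)\|\le\|a\|+\varepsilon$. The upgrade you single out as the crux cannot be achieved by any argument, because the inequality as stated is false. Take $A=B=\bbC$ and $0<\varepsilon\le 1$. Let $\varphi(z)=z+\varepsilon/4$ if $0<|z|<1/2$, and $\varphi(z)=z$ otherwise. Every defect is built from constants $c_i\in\{0,\varepsilon/4\}$:
\begin{itemize}
\item the additivity and homogeneity defects are $c_1-c_2-c_3$ and $c_1-\lambda c_2$, of modulus at most $\varepsilon/2$;
\item the multiplicativity defect is $c_1-ac_3-c_2b-c_2c_3$, of modulus at most $3\varepsilon/4+\varepsilon^2/16\le\varepsilon$;
\item the $*$-defect is $0$, and $\varphi$ maps $\bbC_{\le 1}$ into itself and $\bbR$ into $\bbR$.
\end{itemize}
So $\varphi$ satisfies every hypothesis, and even $\varphi(0)=0$. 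Yet $\|\varphi(1/8)\|=1/8+\varepsilon/4>(1+\varepsilon)/8$.

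The same example gives $\|\varphi(t\cdot 1)-t\varphi(1)\|=\varepsilon/4>\varepsilon t$ for $t=1/8$. So neither your Hyers-type rescaling of the homogeneity defect nor the normalization $\varphi(0)=0$ can rescue the relative bound.

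The paper's proof gets the relative bound by applying approximate homogeneity with $\lambda=1/\|a\|$, which lies outside the allowed range $|\lambda|\le 1$. The legitimate application, with $\lambda=\|a\|$ to $a/\|a\|$, gives exactly your additive bound. The paper's own treatment of $a=0$ likewise yields only $\|\varphi(0)\|\le\varepsilon$, not $\varphi(0)=0$.

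So your ``fallback'' $\|\varphi(a)\|\le\|a\|+\varepsilon$ is the correct form of \eqref{1.approx}. It is also what the later arguments use, up to harmless changes of constants; for instance, it yields precisely the $2\varepsilon$ in the continuity claim in the proof of Lemma~\ref{L.extend.from.ideal}. The relative bound holds only for corrected maps such as $a\mapsto\|a\|\varphi(a/\|a\|)$, as in Lemma~\ref{L.Corrected}, which is in the spirit of the ``preliminary correction'' you mention.
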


\begin{proof} \eqref{1.approx} If $a\neq 0$ then 
	$\|\varphi(a/\|a\|)-\frac 1{\|a\|} \varphi(a)\|\leq \varepsilon$. Since $\|\varphi(a/\|a\|)\|\leq 1$, $\|\varphi(a)\|\leq \|a\| (1+\varepsilon)$ follows. Since 
	$\|\varphi(0)-2\varphi(0)\|\leq \varepsilon$, the case when $a=0$ follows. 
	
	\eqref{2.approx} Let $c$ be such that $a+c^*c=b$. Then $\varphi(b)-\varphi(a)\approx_\varepsilon \varphi(c^*c)\approx_{2\varepsilon}\varphi(c)^*\varphi(c)\geq 0$ and the conclusion follows. 
	\end{proof}

\begin{lemma}
	\label{L.exp} For all self-adjoint $a$ and $b$ with $\max(\|a\|,\|b\|)\leq 1$  and sufficiently small $\varepsilon>0$ we have that $\sup_{|t|\leq 1} \|\exp(ita)-\exp(itb)\|\leq \varepsilon$ implies $\|a-b\|\leq 2\sqrt \varepsilon$.  \end{lemma}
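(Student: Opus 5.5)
Since $a$ and $b$ are self-adjoint of norm at most $1$, the idea is to recover $a$ and $b$ from their exponentials by a first-order (difference-quotient) approximation at a suitable small time $t$, and to balance the error terms.

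For $|t|\le 1$ write
\[
\exp(ita)=1+ita+R_a(t),\qquad \|R_a(t)\|\le \sum_{k\ge 2}\frac{|t|^k\|a\|^k}{k!}\le e^{|t|}-1-|t|\le \tfrac{e}{2}\,t^2 ,
\]
and similarly for $b$. Subtracting the two expansions gives
\[
t(a-b)=-i\bigl(\exp(ita)-\exp(itb)\bigr)+i\bigl(R_a(t)-R_b(t)\bigr).
\]
By hypothesis this yields $|t|\,\|a-b\|\le \varepsilon+e\,t^2$, that is,
\[
\|a-b\|\le \frac{\varepsilon}{|t|}+e|t| \quad\text{for every } 0<|t|\le 1 .
\]

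Next we optimize in $t$. Taking $t=\sqrt{\varepsilon/e}$, which lies in $(0,1]$ once $\varepsilon\le e$, gives $\|a-b\|\le 2\sqrt{e\varepsilon}$. This has the correct order $\sqrt\varepsilon$, but the constant $2\sqrt e$ is worse than the claimed $2$.

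The main obstacle is therefore sharpening the remainder estimate so that the coefficient of $t$ becomes $1$ rather than $e$. The first thing to try is to use symmetry in $t$ to cancel the even-order terms. Subtract the expansions at $t$ and $-t$:
\[
\exp(ita)-\exp(-ita)=2i\sin(ta),
\]
and the hypothesis, applied at both $t$ and $-t$, gives $\|\sin(ta)-\sin(tb)\|\le\varepsilon$. Since $\|\sin(ta)-ta\|\le |t|^3/6$ for $\|a\|\le1$, we obtain
\[
|t|\,\|a-b\|\le \varepsilon+\frac{|t|^3}{3}.
\]
Now take $|t|=\sqrt\varepsilon$, which is allowed for $\varepsilon\le1$. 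This gives $\|a-b\|\le \sqrt\varepsilon+\varepsilon/3$, and this is at most $2\sqrt\varepsilon$ for sufficiently small $\varepsilon$ (indeed for all $\varepsilon\le 1$). This proves the lemma with room to spare.

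The only points to check are that every $t$ used satisfies $|t|\le1$, and that the power-series remainder bounds are valid in the operator norm. The latter holds because $\|a^k\|\le\|a\|^k\le 1$.
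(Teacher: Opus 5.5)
Your proof is correct and follows essentially the paper's route: a first-order expansion of $\exp(ita)-\exp(itb)$ at time $t=\sqrt\varepsilon$, balancing $\varepsilon/t$ against a higher-order remainder. The paper gets the constant $2$ directly by bounding $\|g''\|\le 2$ for $g(t)=\exp(ita)-\exp(itb)$ in Taylor's integral remainder (equivalently, $\|\exp(ita)-1-ita\|\le t^2/2$ by functional calculus), so your detour through the crude bound $\frac e2 t^2$ and the $t\mapsto -t$ symmetrization is not needed, although the symmetrization does give the slightly sharper bound $\sqrt\varepsilon+\varepsilon/3$; note that $\|\sin(ta)-ta\|\le |t|^3/6$ comes from functional calculus for self-adjoint $a$, since termwise power-series estimates only give $\sinh|t|-|t|$, which is still enough.
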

\begin{proof}
	Set
	\[
	g(t)=\exp(ita)-\exp(itb), \qquad t\in[-1,1].
	\]
	Then \(g(0)=0\), and by assumption
	\[
	\sup_{|t|\leq 1}\|g(t)\|\leq \varepsilon .
	\]
	Moreover,
	$
	g'(t)=ia\exp(ita)-ib\exp(itb),
	$
	so that
	$
	g'(0)=i(a-b).
	$
	Also
	\[
	g''(t)=-a^2\exp(ita)+b^2\exp(itb).
	\]
	Since \(a\) and \(b\) are self-adjoint contractions, \(\|\exp(ita)\|=\|\exp(itb)\|=1\), hence
	\[
	\|g''(t)\|\leq \|a^2\|+\|b^2\|\leq 2
	\]
	for all \(t\in[-1,1]\).
	For \(0<h\leq 1\), Taylor's formula with integral remainder gives
	\[
	g(h)=h g'(0)+\int_0^h (h-s)g''(s)\,ds.
	\]
	Therefore, 
	\[
	h\|g'(0)\|
	\leq \|g(h)\|+\int_0^h (h-s)\|g''(s)\|\,ds
	\leq \varepsilon+h^2.
	\]
	Thus
	\[
	\|a-b\|=\|g'(0)\|\leq \frac{\varepsilon}{h}+h.
	\]
	Choosing \(h=\sqrt{\varepsilon}\) for \(0<\varepsilon\leq 1\), we obtain
	$
	\|a-b\|\leq 2\sqrt{\varepsilon}.
	$
	This proves the claim.
\end{proof}

\begin{lemma}\label{L.g}
	There exists a function $g(\varepsilon)\to 0$ as $\varepsilon\to 0$ with the following property. For all \cstar-algebras $A$ and $B$ and every $\varepsilon$-$*$-homomorphism $\varphi\colon A\to B$ such that $\|a\|\leq 1$ implies $\|\varphi(a)\|\leq 1$ and $\varphi[A_{\textrm{sa}}]\subseteq B_{\textrm{sa}}$ the following 
	holds, where $\log$ denotes the main branch of the natural logarithm:
	\begin{enumerate}
		\item \label{I.exp} $\exp(i\varphi(a))\approx_{g(\varepsilon )}\varphi(\exp(i a))$ for all $a\in A_{\textrm{sa}}$ with $\|a\|\leq 1$. 
		\item $\varphi(a)\approx_{g(\varepsilon)} -i \log \varphi(\exp(ia))$ for all $a\in A_{\textrm{sa}}$ with $\|a\|\leq 1$. \qed 
	\end{enumerate}
\end{lemma}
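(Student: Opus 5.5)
The plan is to prove \eqref{I.exp} by comparing both sides with a truncated power series, and then deduce the second item from the first using Lemma~\ref{L.exp}, or more directly from the continuity of the logarithm near the unit circle.

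For \eqref{I.exp}, fix $a\in A_{\textrm{sa}}$ with $\|a\|\le 1$ and choose $N$ such that $\sum_{k>N}1/k!<\delta$. Then $\exp(ia)\approx_\delta p_N(a)$, where $p_N(x)=\sum_{k\le N}(ix)^k/k!$. The aim is to show that $\varphi(p_N(a))\approx_{C_N\varepsilon}p_N(\varphi(a))$ with a constant $C_N$ depending only on $N$.

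First, induction on $k$ gives $\varphi(a^k)\approx_{(k-1)\varepsilon}\varphi(a)^k$. At each step the multiplicativity defect is at most $\varepsilon$, and since all the relevant elements lie in the unit ball (note $\|\varphi(a)\|\le 1$ by hypothesis), the errors simply add.

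Next, scalar multiplication by $i^k/k!\in\bbC_{\le 1}$ costs at most $\varepsilon$ per term. Additivity applied to the $N+1$ partial sums costs at most $N\varepsilon$. One technical point: the partial sums must lie in the unit ball for the additivity estimate to apply. To handle this, rescale by a factor $1/e$: apply the argument to the element $e^{-1}p_N(a)$ and its terms, then use the scalar estimate once more to undo the rescaling, losing another $O(\varepsilon)$. This is routine but fiddly.

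The remaining issue is that $\varphi$ need not be continuous, so $\varphi(\exp(ia))$ and $\varphi(p_N(a))$ have to be compared directly. Their difference $d=\exp(ia)-p_N(a)$ has norm at most $\delta$. By approximate additivity, $\varphi(\exp(ia))\approx_\varepsilon\varphi(p_N(a))+\varphi(d)$, and Lemma~\ref{L.approx.continuity}\eqref{1.approx} gives $\|\varphi(d)\|\le\delta(1+\varepsilon)$. As before, rescale to keep everything in the unit ball.

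Altogether,
\[
\|\exp(i\varphi(a))-\varphi(\exp(ia))\|\le 2\delta+C_N\varepsilon+\delta(1+\varepsilon)+O(\varepsilon),
\]
and we define $g(\varepsilon)$ to be the infimum of this bound over $N$. This tends to $0$ as $\varepsilon\to 0$: choose $N(\varepsilon)\to\infty$ slowly enough that $C_{N(\varepsilon)}\varepsilon\to 0$.

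For the second item, put $u=\exp(i\varphi(a))$. This is a unitary whose spectrum lies in $\{e^{i\theta}:|\theta|\le 1\}$, which stays well away from $-1$. By \eqref{I.exp}, $\varphi(\exp(ia))$ is within $g(\varepsilon)$ of $u$. For small $\varepsilon$ its spectrum therefore lies in a neighbourhood of that arc that avoids the branch cut. On this region $\log$ is holomorphic, so the holomorphic functional calculus is norm continuous there, uniformly on the set of elements within a fixed distance of such unitaries; this follows from the resolvent estimate on a contour surrounding the arc. Hence $-i\log\varphi(\exp(ia))\approx_{g'(\varepsilon)}-i\log u=\varphi(a)$. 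The last equality holds because $\varphi(a)$ is self-adjoint with $\|\varphi(a)\|\le 1<\pi$. Enlarging $g$ to $\max(g,g')$ then covers both items.

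I expect the main obstacle to be the bookkeeping that keeps every argument inside the unit ball, since the definition of an $\varepsilon$-$*$-homomorphism only controls elements of $A_{\le 1}$. The uniform continuity of $\log$ near non-normal perturbations of unitaries is standard.
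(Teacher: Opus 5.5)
\textbf{There is one genuine gap: the constant term of $p_N$.} Your induction $\varphi(a^k)\approx_{(k-1)\varepsilon}\varphi(a)^k$ only covers $k\ge 1$. The $k=0$ term asks you to compare $\varphi(1_A)$ with $\varphi(a)^0=1_B$, and nothing in the hypotheses gives $\varphi(1_A)\approx 1_B$.

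The statement is in fact false without this. Take $\varphi=0$: it is a $0$-$*$-homomorphism, it is contractive, and it maps $A_{\textrm{sa}}$ into $B_{\textrm{sa}}$. Yet $\|\exp(i\varphi(a))-\varphi(\exp(ia))\|=\|1_B\|=1$, and $\log\varphi(\exp(ia))=\log 0$ is undefined. More generally, every $*$-homomorphism $\Phi$ satisfies $\Phi(\exp(ia))=\exp(i\Phi(a))-(1_B-\Phi(1_A))$.

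The paper's proof has the same hole. It argues by ultraproducts, and the limiting $*$-homomorphism commutes with $\exp$ only if it is unital. In Lemma~\ref{L.pi-to-rho-and-Phi}, where the lemma is used, $\varphi(1)=1$ is part of the hypotheses. So add $\varphi(1_A)=1_B$, or $\|\varphi(1_A)-1_B\|\le\varepsilon$. Then the $k=0$ term costs only $O(\varepsilon)$ and your argument goes through.

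With that correction, your route is genuinely different from the paper's. The paper argues by contradiction: a sequence of $1/n$-$*$-homomorphisms with bad contractions $a_n$ induces an honest $*$-homomorphism between ultraproducts, which commutes with functional calculus. That argument is short but gives no rate, and item (2) is dismissed as ``analogous''.

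Your truncated-series estimate is self-contained and quantitative. It gives $g(\varepsilon)=O(\inf_N(N\varepsilon+1/N!))=O(\varepsilon\log(1/\varepsilon))$. Your resolvent estimate on a contour around the arc $\{e^{i\theta}:|\theta|\le 1\}$, which lies at distance $1$ from the branch cut, supplies an actual proof of item (2) with $g'=O(g)$.

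Two bookkeeping remarks:
\begin{enumerate}
\item Undo the $1/e$ rescaling on $\exp(ia)$, which lies in the unit ball, not on $p_N(a)$, whose norm can be $1+\delta$. That is, compare $\varphi(\exp(ia))$ with $e\,\varphi(e^{-1}\exp(ia))$ and split $e^{-1}\exp(ia)=e^{-1}p_N(a)+e^{-1}d$.
\item The definition only allows scalars with $|\lambda|\le 1$, so what you actually get is $\|\varphi(d)\|\le\|d\|+\varepsilon$ rather than $\|d\|(1+\varepsilon)$. This is equally good for your purposes.
\end{enumerate}
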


\begin{proof}
	We give a soft proof of \eqref{I.exp}. 
	Assume otherwise, that there is $\delta>0$ and for every~$n$ there are a $1/n$-$*$-homomorphism $\varphi_n\colon A_n\to B_n$ and a self-adjoint contraction $a_n$ in $A_n$ such that 
	$\|\exp(i\varphi_n(a_n))-\varphi_n(\exp(i a_n))\|>\delta$
	
	Fix a nonprincipal ultrafilter $\cU$ on $\bbN$ and consider ultraproducts  $A=\prod_\cU A_n$ and $B=\prod_\cU B_n$. Then the function from $\prod_n A_n$ to $\prod_n B_n$ defined by  $\varphi((a_n))=(\varphi_n(a_n))$ lifts a $*$-homomorphism from $A$ into $B$, and 
	the element $a$ with representing sequence $(a_n)$ is a self-adjoint contraction in $A$ such that $\|\exp(i\varphi(a))-\varphi(\exp(i a))\|>\delta$ contradiction. 
	
	Soft proofs of the remaining statement is analogous and therefore omitted. 
\end{proof}

\begin{lemma}\label{L.Corrected}
		There exists a function $g(\varepsilon)\to 0$ as $\varepsilon\to 0$ with the following property. For all \cstar-algebras $A$ and $B$ and every $\varepsilon$-$*$-homomorphism $\varphi\colon A\to B$ 
 there exists a $g(\varepsilon)$-$*$-homomorphism $\psi\colon A\to B$ such that $\sup_{\|a\|\leq 1} \|\psi(a)-\varphi(a)\| \leq g(\varepsilon)$ and in addition $\psi$ satisfies the following for all $a\in A$ such that  $\|a\|\leq 1$. 
\begin{enumerate}
	\item\label{a.Corrected.norm} $\|\psi(a)\|\leq 1$. 
	\item \label{a.Corrected.*} $\psi(a^*)=\psi(a)^*$. 
	\item \label{a.Corrected.scalar} $\psi(\lambda a)=\lambda \psi(a)$ for all $\lambda\in \bbC$ such that $|\lambda|\leq 1$. 
	\item \label{a.Corrected.ctns}$\|\psi(a)\|\leq \|a\|$. 
	\item \label{a.Corrected.leq} $\psi(a)\leq \psi(b)+3\varepsilon$ for all $0\leq a\leq b\leq 1$. 
			\item \label{a.Corrected.exp} $\exp(i\psi(a))\approx_{g(\varepsilon )}\psi(\exp(i a))$ for all $a\in A_{\textrm{sa}}$ with $\|a\|\leq 1$. 
	\item \label{a.Corrected.log} $\psi(a)\approx_{g(\varepsilon)} -i \log \psi(\exp(ia))$ for all $a\in A_{\textrm{sa}}$ with $\|a\|\leq 1$.
\end{enumerate}
\end{lemma}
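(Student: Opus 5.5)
We construct $\psi$ from $\varphi$ by a finite sequence of explicit corrections. Each correction moves $\varphi$ by $O(\varepsilon)$ uniformly on the unit ball and costs at most a constant factor in the defect. Items (1)--(5) are imposed by hand, and items (6) and (7) then follow from Lemma~\ref{L.g} applied to the corrected map.

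\emph{Step 1 ($*$-symmetrization).} Replace $\varphi$ by $\varphi_1(a)=\frac12(\varphi(a)+\varphi(a^*)^*)$. Then $\varphi_1(a^*)=\varphi_1(a)^*$, $\|\varphi_1-\varphi\|\le \varepsilon/2$ on $A_1$, and $\varphi_1$ is a $C\varepsilon$-$*$-homomorphism with values in $B_{\le 1}$. In particular $\varphi_1(A_{\textrm{sa}})\subseteq B_{\textrm{sa}}$.

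\emph{Step 2 (scalar homogeneity).} Choose, via the axiom of choice, a set $S\subseteq A_1$ of representatives of the nonzero elements modulo the relation $a\sim b$ iff $b=\lambda a$ with $|\lambda|=1$. We may take $S$ closed under $*$: whenever possible, pair $a$ with a representative of the class of $a^*$; when $a^*\sim a$, some unimodular multiple of $a$ is self-adjoint, and we choose that one. Normalize further so that each $s\in S$ has norm $1$. Writing every nonzero $a\in A_1$ as $a=\lambda s$ with $s\in S$ and $0<|\lambda|\le 1$, define
\[
\varphi_2(a)=\lambda\,\frac{\varphi_1(s)}{\max(1,\|\varphi_1(s)\|)}, \qquad \varphi_2(0)=0 .
\]
This is well defined and satisfies (1)--(4): $\psi(\lambda a)=\lambda\psi(a)$, $\|\psi(a)\|\le\|a\|$, and $\psi(a^*)=\psi(a)^*$, the last because the representatives respect $*$ and $\varphi_1$ does. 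Lemma~\ref{L.approx.continuity}(1) gives $\|\varphi_1(s)\|\le 1+C\varepsilon$, and combining this with approximate homogeneity of $\varphi_1$ yields $\|\varphi_2(a)-\varphi_1(a)\|\le C'\varepsilon$. A perturbation of size $C'\varepsilon$ changes each defect in the definition of an $\varepsilon$-$*$-homomorphism by $O(\varepsilon)$, so $\varphi_2$ is a $C''\varepsilon$-$*$-homomorphism.

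\emph{Step 3 (the remaining items).} Set $\psi=\varphi_2$. Item (5) follows from Lemma~\ref{L.approx.continuity}(2) applied to $\psi$, which is legitimate since $\psi$ preserves self-adjointness. As stated, however, that lemma gives the constant $3C''\varepsilon$ rather than $3\varepsilon$; either the constant in (5) should be read as absorbed into $g(\varepsilon)$, or one tracks the bound back to $\varphi$. Items (6) and (7) follow from Lemma~\ref{L.g} applied to $\psi$, which is permitted because $\psi$ is norm-contractive on the unit ball and self-adjoint-preserving. Finally, take $g$ to be the maximum of the resulting functions, all of which tend to $0$.

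\emph{Main obstacle.} The delicate point is Step 2: making homogeneity exact while keeping $*$-compatibility exact. The difficulty lies in choosing the representatives compatibly with the involution, particularly for classes that are closed under $*$ up to a phase. We handle this by choosing self-adjoint representatives when the class of $a^*$ equals that of $a$. If this bookkeeping causes trouble, a fallback is to first homogenize and then re-symmetrize, which preserves homogeneity only for real $\lambda$. In that case we would instead define $\psi$ separately on real and imaginary parts, setting $\psi(a)=\psi(\mathrm{Re}\,a)+i\psi(\mathrm{Im}\,a)$ with a real-homogeneous map on $A_{\textrm{sa}}$.
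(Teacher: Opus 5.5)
Your proof is correct. It follows the same skeleton as the paper's proof: $*$-symmetrize via $\frac12(\varphi(a)+\varphi(a^*)^*)$, normalize scalars, then obtain (5) from Lemma~\ref{L.approx.continuity} and (6)--(7) from Lemma~\ref{L.g}. Your Step~2, however, differs from the paper's in a way that matters.

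\emph{The homogenization step.} The paper homogenizes radially, setting $\varphi_2(a)=\|a\|\varphi_1(a/\|a\|)$. This yields (1), (2), (4), but gives $\varphi_2(ta)=t\varphi_2(a)$ only for $t\ge 0$, because $\varphi_1$ has no reason to commute exactly with unimodular scalars. So that formula does not actually deliver (3) for complex $\lambda$. Your construction does:
\begin{itemize}
\item you take one norm-one representative per complex line $\bbC a$;
\item you make the choice $*$-invariant by pairing the line of $s$ with that of $s^*$, and by taking a self-adjoint representative (unique up to sign) when $\bbC a=\bbC a^*$.
\end{itemize}
This gives exact complex homogeneity while keeping exact $*$-compatibility. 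It costs only $\|\varphi_2-\varphi_1\|\le\varepsilon$ on $A_1$, by approximate homogeneity of $\varphi_1$. So your Step~2 supplies exactly what the paper's normalization misses.

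\emph{The constant in (5).} Your remark is correct. Lemma~\ref{L.approx.continuity} applied to $\psi$ gives $3$ times the defect of $\psi$, i.e.\ $O(\varepsilon)$ rather than $3\varepsilon$. The paper's proof has the same issue, and the weaker constant is harmless in the later applications.

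\emph{Small points.}
\begin{itemize}
\item Define $\psi$ by your formula on all of $A$, with $|\lambda|=\|a\|$, not only on $A_1$. The additivity condition evaluates $\psi(a+b)$ with $\|a+b\|\le 2$. Control there comes from $\varphi(c)\approx_\varepsilon 2\varphi(c/2)$ together with exact homogeneity of $\psi$.
\item The factor $\max(1,\|\varphi_1(s)\|)$ is redundant. By definition an $\varepsilon$-$*$-homomorphism maps $A_{\le 1}$ into $B_{\le 1}$, so $\|\varphi_1(s)\|\le 1$ already.
\item Drop the fallback in your last paragraph. The map $\psi(\mathrm{Re}\,a)+i\psi(\mathrm{Im}\,a)$ is homogeneous for $\lambda\in\mathbb R\cup i\mathbb R$, but not for general complex $\lambda$ without exact additivity. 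It also only yields $\|\psi(a)\|\le 2\|a\|$ a priori, which breaks (1) and (4).
\end{itemize}
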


\begin{proof}
	Let $\varphi\colon A\to B$ be an $\varepsilon$-$*$-homomorphism. Define $\varphi_1\colon A\to B$ by 
	\[
	\varphi_1(a)=\frac 12 (\varphi(a)+\varphi(a^*)^*). 
	\]
	Then for $\|a\|\leq 1$ we have $\|\varphi_1(a)-\varphi(a)\|\leq \frac 12 \|\varphi(a^*)^*-\varphi(a)\|\leq \frac 12\varepsilon$. 
	
	As in \cite[Theorem 5.18]{farah2025corona}, 
	define $\varphi_2\colon A\to B$ by $\varphi_2(0)=0$ and for $a\neq 0$ let 
	\[
	\varphi_2(a)=\|a\|\varphi_1(a/\|a\|). 
	\]
	Then $\|\varphi_2(0)-\varphi_1(0)\|=\|\varphi_1(0)\|\leq \varepsilon$ by Lemma~\ref{L.approx.continuity} while for $a\neq 0$ and $\|a\|\leq 1$ we have  $\|\varphi_2(a)-\varphi_1(a)\|\leq \varepsilon$. 
	
	Then $\varphi_2$ maps $A_1$ into $B_1$ and it satisfies \eqref{a.Corrected.norm}, \eqref{a.Corrected.*}, and \eqref{a.Corrected.scalar}. 
	Since $\|a\|\leq 1$ and $\|\varphi(a)\|\leq 1$, \eqref{a.Corrected.scalar} implies $\|\varphi(a)\|\leq \|a\|$ for $\|a\|\leq 1$, hence   \eqref{a.Corrected.ctns} holds. \eqref{a.Corrected.leq} is Lemma~\ref{L.approx.continuity}. 
	
	Since it uniformly approximates $\varphi$ on the unit ball of $A$, 
	it is straightforward to verify that $\varphi_2$ is an $O(\varepsilon)$-$*$-homomorphism, 
		This function satisfies \eqref{a.Corrected.exp} and \eqref{a.Corrected.log} by Lemma~\ref{L.g}. 
\end{proof}

We now have a convenient reformulation of Ulam stability for classes of \cstar-algebras. 

\begin{corollary}\label{C.UlamStableCorrected}
Suppose that $\sfC$ and $\sfD$  are classes of \cstar-algebra. Then the following are equivalent. 
\begin{enumerate}
	\item\label{1.C.Ulam.Corrected} The pair $(\sfC,\sfD)$ is Ulam stable. 
\item\label{2.C.Ulam.Corrected} There is a function $f(\varepsilon)\to 0$ as $\varepsilon\to 0$ such that for every $A\in \sfC$, $B\in \sfD$, and $\varepsilon$-$*$-homomorphism $\varphi\colon A\to B$  that for all $a,b$ in $A_1$ and all $\lambda\in \bbC_1$ satisfies
\begin{enumerate}[label=(C\arabic*)]
	\item\label{Corrected.norm} $\|\varphi(a)\|\leq 1$.
	\item \label{Corrected.*} $\varphi(a^*)=\varphi(a)^*$. 
	\item \label{Corrected.scalar} $\varphi(\lambda a)=\lambda \varphi(a)$ for all $\lambda\in \bbC$ such that $|\lambda|\leq 1$. 
	\item \label{Corrected.ctns}$\|\varphi(a)\|\leq \|a\|$. 
	\item \label{Corrected.leq} $\varphi(a)\leq \varphi(b)+3\varepsilon$ for all $0\leq a\leq b\leq 1$. 
	\item \label{Corrected.exp} $\exp(i\varphi(a))\approx_{g(\varepsilon )}\varphi(\exp(i a))$ for all $a\in A_{\textrm{sa}}$ with $\|a\|\leq 1$. 
	\item \label{Corrected.log} $\varphi(a)\approx_{g(\varepsilon)} -i \log \varphi(\exp(ia))$ for all $a\in A_{\textrm{sa}}$ with $\|a\|\leq 1$.
\end{enumerate}
%
there is a $*$-homomorphism $\psi\colon A\to B$ such that $$\sup_{\|a\|\le 1}\|\varphi(a)-\psi(a)\|\le f(\varepsilon).$$
\end{enumerate}
\end{corollary}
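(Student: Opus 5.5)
The implication \eqref{1.C.Ulam.Corrected}$\Rightarrow$\eqref{2.C.Ulam.Corrected} is immediate: an $\varepsilon$-$*$-homomorphism satisfying \ref{Corrected.norm}--\ref{Corrected.log} is in particular an $\varepsilon$-$*$-homomorphism. Ulam stability therefore provides a $*$-homomorphism within $f_{\sfC,\sfD}(\varepsilon)$, and we can take $f=f_{\sfC,\sfD}$.

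For the converse \eqref{2.C.Ulam.Corrected}$\Rightarrow$\eqref{1.C.Ulam.Corrected} we use Lemma~\ref{L.Corrected}. Let $\varphi\colon A\to B$ be an $\varepsilon$-$*$-homomorphism with $A\in\sfC$ and $B\in\sfD$. Lemma~\ref{L.Corrected} produces a $g(\varepsilon)$-$*$-homomorphism $\psi\colon A\to B$ with $\sup_{\|a\|\le1}\|\psi(a)-\varphi(a)\|\le g(\varepsilon)$, and $\psi$ satisfies properties \eqref{a.Corrected.norm}--\eqref{a.Corrected.log} of that lemma. These properties are exactly \ref{Corrected.norm}--\ref{Corrected.log}, except that the parameter is $\varepsilon'=g(\varepsilon)$ in place of $\varepsilon$. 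This matters in \ref{Corrected.leq}, where the lemma gives the bound $3\varepsilon$. Provided $g(\varepsilon)\ge\varepsilon$, which we may assume by replacing $g$ with $\max(g(\varepsilon),\varepsilon)$, we have $3\varepsilon\le 3\varepsilon'$. The exponential and logarithm conditions \ref{Corrected.exp} and \ref{Corrected.log} carry their own error function $g$, and it should be read as evaluated at the new parameter. The cleanest way to arrange this is to fix, once and for all, a single nondecreasing function $g$ that works in Lemma~\ref{L.Corrected} and in Lemma~\ref{L.g}, and to enlarge it if needed so that $g(\varepsilon)\le g(g(\varepsilon))$, for instance by taking $g(\varepsilon)\ge\sqrt{\varepsilon}$ for small $\varepsilon$, which makes $g\circ g\ge g$ near $0$.

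Applying \eqref{2.C.Ulam.Corrected} to $\psi$ with parameter $\varepsilon'=g(\varepsilon)$ gives a $*$-homomorphism $\rho\colon A\to B$ with $\sup_{\|a\|\le1}\|\psi(a)-\rho(a)\|\le f(g(\varepsilon))$. By the triangle inequality, $\sup_{\|a\|\le1}\|\varphi(a)-\rho(a)\|\le g(\varepsilon)+f(g(\varepsilon))$. This bound tends to $0$ as $\varepsilon\to0$, because $g(\varepsilon)\to0$ and $f(t)\to0$ as $t\to0$. So $(\sfC,\sfD)$ is Ulam stable with $f_{\sfC,\sfD}(\varepsilon)=g(\varepsilon)+f(g(\varepsilon))$.

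The only real obstacle is this bookkeeping of parameters. We must make sure the corrected map $\psi$ literally satisfies the hypotheses of \eqref{2.C.Ulam.Corrected} at some parameter that tends to $0$, and in particular that the error functions in \ref{Corrected.exp}, \ref{Corrected.log} and the constant in \ref{Corrected.leq} are compatible with the new parameter. Both are handled by choosing $g$ monotone and large enough as described above. The problem is harmless in any case, because \eqref{2.C.Ulam.Corrected} only asks for some function $f$, so any fixed reparametrization is absorbed into $f$.
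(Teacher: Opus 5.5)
Your proposal is correct and follows the paper's argument. The converse direction uses Lemma~\ref{L.Corrected} to correct the given $\varepsilon$-$*$-homomorphism, applies \eqref{2.C.Ulam.Corrected} to the corrected map, and concludes by the triangle inequality; \eqref{1.C.Ulam.Corrected}$\Rightarrow$\eqref{2.C.Ulam.Corrected} is trivial. Your parameter bookkeeping (taking $g$ nondecreasing with $g(\varepsilon)\ge\varepsilon$, so the corrected map satisfies (C5)--(C7) at parameter $g(\varepsilon)$) makes explicit a point the paper's two-line proof glosses over; monotonicity plus $g(\varepsilon)\ge\varepsilon$ already gives $g\circ g\ge g$, so the $\sqrt{\varepsilon}$ enlargement is unnecessary.
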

\begin{proof}
	By Lemma~\ref{L.Corrected}, every $\varepsilon$-$*$-homomorphism $\varphi_0$ from $A$ into $B$ can be $g(\varepsilon)$-approximated on $A_1$ by a function $\varphi$ that satisfies  conditions in \eqref{2.C.Ulam.Corrected}. Clearly $\varphi$ is uniformly approximated by a $*$-homomorphism on $A_1$ if and only if $\varphi_0$ is uniformly approximated by a $*$-homomorphism on $A_1$. 
\end{proof}

\section{Ulam stability -- from groups to \cstar-algebras}\label{S.Groups}

The main result of this section, Theorem~\ref{T.Ulam2}, will be used together with  Ulam stability for amenable groups, see \cite{BMT,Kaz:epsilon}, towards proofs of our main results. 
Note that the group homomorphism $\pi$ is not required to be continuous.

\begin{theorem}\label{T.Ulam2}
	There exists a function $h(\varepsilon)\to 0$ as $\varepsilon\to 0$ with the following property. 
	For every unital  \cstar-algebra $A$, unital \cstar-algebra $B$,  and an $\varepsilon$-$*$-homomorphism $\varphi\colon A \to B$, if there exists a group homomorphism $\pi\colon U(A)\to U(B)$ such that 
	\[
	\sup_{u\in U(A)} \|\varphi(u)-\pi(u)\|\leq \varepsilon+120\varepsilon^2
	\] 
	then  there exists a $*$-homomorphism $\Phi\colon A\to B$ such that $\Phi(\exp(ia))=\pi(\exp(ia))$ for all $a\in A_{\textrm{sa}}$ and 
	$\sup_{\|a\|\le 1}\|\varphi(a)-\Phi(a)\|\le h(\varepsilon)$.
\end{theorem}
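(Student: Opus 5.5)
The plan is to build $\Phi$ out of $\pi$ rather than out of $\varphi$, by differentiating the one-parameter groups $t\mapsto \pi(\exp(ita))$. By Corollary~\ref{C.UlamStableCorrected} (Lemma~\ref{L.Corrected}), after paying $g(\varepsilon)$ I may assume $\varphi$ satisfies \ref{Corrected.norm}--\ref{Corrected.log}. I keep the hypothesis $\|\varphi(u)-\pi(u)\|\le \delta:=g(\varepsilon)+\varepsilon+120\varepsilon^2$, which is small.

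\emph{Step 1: continuity and generators.} Fix $a\in A_{\textrm{sa}}$ with $\|a\|\le 1$ and put $v_t=\pi(\exp(ita))$, a group homomorphism $\bbR\to U(B)$ with no continuity assumed. For $|t|\le 1$, \ref{Corrected.exp} gives $v_t\approx_{2\delta}\exp(it\varphi(a))$. Spectral argument: if $e^{i\theta}\in\sigma(v_t)$ with $|\theta|\le\pi$, then $e^{in\theta}\in\sigma(v_{nt})$. As long as $|nt|\le 1$, $v_{nt}$ is within $2\delta+\|nt\varphi(a)\|$ of $1$. Hence $|\theta|\le (1+O(\delta))|t|$, and $t\mapsto v_t$ is norm-continuous at $0$, hence everywhere. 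A norm-continuous unitary group has a bounded self-adjoint generator $b$ with $v_t=\exp(itb)$ and $\|b\|\le 1+O(\delta)$. Set $\Phi(a)=b$ and extend by homogeneity $\Phi(sa)=s\Phi(a)$, which is consistent because $v$ is a group. Lemma~\ref{L.exp} applied to $\sup_{|t|\le1}\|\exp(itb)-\exp(it\varphi(a))\|\le 2\delta+O(g(\varepsilon))$ gives $\|\Phi(a)-\varphi(a)\|\le 2\sqrt{O(\delta)}$. The same spectral estimate applied to $x=s a$ with $s$ small shows $\|\pi(e^{ix})-1\|\le(1+O(\delta))\|x\|$. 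So $\pi$ is Lipschitz near $1$, hence continuous on the norm-connected component of $U(A)$; this is the key gain.

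\emph{Step 2: linearity and the Lie structure.} With $\pi$ continuous on $\exp(iA_{\textrm{sa}})$, the Trotter product formula can be pushed through $\pi$:
\[
\pi(e^{i(a+a')})=\lim_n \pi\bigl((e^{ia/n}e^{ia'/n})^n\bigr)=\lim_n (e^{i\Phi(a)/n}e^{i\Phi(a')/n})^n=e^{i(\Phi(a)+\Phi(a'))}.
\]
Hence $\Phi$ is additive on $A_{\textrm{sa}}$, and with homogeneity it is real linear. Extend it complex-linearly to $A$, which automatically preserves ${}^*$. The commutator formula $\lim_n (e^{ia/n}e^{ia'/n}e^{-ia/n}e^{-ia'/n})^{n^2}=e^{-[a,a']}$ similarly gives $\Phi(i[a,a'])=i[\Phi(a),\Phi(a')]$. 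Directly from $\pi$ being a homomorphism, $\Phi(uau^*)=\pi(u)\Phi(a)\pi(u)^*$ for $u\in\exp(iA_{\textrm{sa}})$. Since $\Phi$ is linear and within $O(\sqrt\delta)$ of $\varphi$ on self-adjoint contractions, it is $O(\sqrt\delta)$-close to $\varphi$ on all of $A_1$, and it is an $O(\sqrt\delta)$-$*$-homomorphism.

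\emph{Step 3: exact multiplicativity (main obstacle).} It remains to show $\Phi(a^2)=\Phi(a)^2$ for self-adjoint $a$; polarization plus the Lie identity then gives full multiplicativity. Equivalently, I must show $\Phi(\exp(ia))=\exp(i\Phi(a))=\pi(\exp(ia))$, which also yields the stated identity. The approach I would try first is the quadratic defect $D(a)=\Phi(a^2)-\Phi(a)^2$. It is homogeneous of degree $2$ and $O(\sqrt\delta)$-small on the unit ball, so the strategy is to find an exact identity forcing $D$ to scale like a different power of $s$, which forces $D=0$.

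On the commutative algebra $C^*(a,1)$, the elements $\Phi(f)$ pairwise commute, since their exponentials commute via $\pi$. So there $\Phi$ is a bounded linear map into an abelian \cstar-algebra, and the multiplicative group homomorphism $f\mapsto e^{i\Phi(f)}$ should rigidify it, in the spirit of \v Semrl's theorem. Concretely, I would compare $\Phi(e^{isa})$ with $e^{is\Phi(a)}$ to second order in $s$. Both are determined by the exact group law: the first by linearity and Ad-equivariance, the second by the generator. An $O(s^2)$ discrepancy $\tfrac{s^2}{2}D(a)$ that is compatible with the group law for all $s$ must vanish.

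If this direct route fails, the fallback is to invoke Theorem~\ref{thm:MV_AF}\eqref{I.Semrl} locally on each $C^*(a,1)$. That gives a $*$-homomorphism near $\Phi$ there, and I would then show it coincides with $\Phi$ using the uniqueness of generators of $\pi$ on $\exp(iC^*(a)_{\textrm{sa}})$.
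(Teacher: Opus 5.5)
\textbf{There is a genuine gap at Step~3.} Your Steps~1--2 are sound and essentially reproduce the gluing part of the paper, Lemma~\ref{L.pi-to-rho-and-Phi.nonabelian}: generators of $t\mapsto\pi(\exp(ita))$, continuity of $\pi$, and Trotter for additivity. Getting continuity directly from your spectral power argument, rather than from the abelian case, is a harmless variation. But Step~3 is the real content of the theorem, and it is only sketched.

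\emph{Why your primary route cannot work.} A second-order comparison of $\Phi(e^{isa})$ with $e^{is\Phi(a)}$ for small $s$ uses only the closeness of $\pi$ and $\varphi$ on unitaries $e^{ix}$ with $\|x\|\le 1$. That information is consistent with non-multiplicative $\Phi$. Take $A=C([0,1])$, $B=\bbC$, $\varphi=\mathrm{ev}_0$, $\Phi(f)=(1-\eta)f(0)+\eta f(1)$ and $\pi(e^{if})=e^{i\Phi(f)}$. Every unitary of $C([0,1])$ is $e^{if}$ with $f$ unique up to $2\pi\mathbb Z 1$, and $\Phi(1)=1$, so $\pi$ is a well-defined homomorphism. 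Moreover $|\pi(e^{if})-\varphi(e^{if})|\le 2\eta$ whenever $\|f\|\le 1$, and all conclusions of your Steps~1--2 hold. Yet $\Phi(a^2)\neq\Phi(a)^2$ for $a=\mathrm{id}$. Separately, $s\mapsto\Phi(e^{isa})$ obeys no group law until multiplicativity is known, so the ``compatible with the group law'' step is circular.

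\emph{What the paper uses instead.} Only the hypothesis on unitaries $e^{ix}$ with $\|x\|$ \emph{large} excludes the example above, since $|e^{i\eta(f(1)-f(0))}-1|$ can reach $2$. The paper exploits exactly this in Lemma~\ref{L.pi-to-rho-and-Phi}. After reducing to an abelian target and then to characters, it writes $\rho=\int\cdot\,d\mu$. The claim that $ab=0$ forces $\rho(a)\rho(b)=0$ is proved with $a,b$ rescaled, to possibly huge norm, so that $e^{i\rho(a)}=e^{i\rho(b)}=-1$. One then compares $(u-1)(v-1)=0$ with $(\pi(u)-1)(\pi(v)-1)=4$, which makes $\mu$ a point mass. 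The normalization $r=1$ uses $e^{is}$ for all $s\in\bbR$, and the exact equality $\Phi(u)=\pi(u)$ follows by taking powers. Once $\Phi=\pi$ on all unitaries, multiplicativity is immediate by writing elements as combinations of four unitaries, so your Jordan/Lie polarization is unnecessary.

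\textbf{Your fallback can be made to work}, but ``uniqueness of generators'' is not the missing step. Let $D=C^*(a,1)$ and let $\Psi\colon D\to C^*(\Phi[D])$ be the $*$-homomorphism from Theorem~\ref{thm:MV_AF}\eqref{I.Semrl}, after rescaling $\Phi$ to be contractive. Note that $\Phi(1)=1$, since $e^{2\pi i\Phi(1)}=\pi(1)=1$ and $\Phi(1)\approx\varphi(1)\approx\pi(1)=1$. You must first show $\Psi(e^{if})=\pi(e^{if})$ \emph{exactly} for $f\in D_{\textrm{sa}}$:
\begin{enumerate}
\item $\Psi(u)\approx\Phi(u)\approx\varphi(u)\approx\pi(u)$ with an error independent of the unitary $u$. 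This is where the hypothesis on \emph{all} unitaries enters.
\item $\Psi(e^{if})$ and $\pi(e^{if})=e^{i\Phi(f)}$ lie in one abelian algebra. Hence $e^{if}\mapsto\Psi(e^{if})^*\pi(e^{if})$ is a homomorphism onto a group of unitaries uniformly close to $1$.
\item Such a group is trivial, because a unitary $w\neq 1$ has a power $w^m$ with $\|w^m-1\|\ge\sqrt3$.
\end{enumerate}
Only then does uniqueness of generators give $\Psi=\Phi$ on $D$, hence $\Phi(a^2)=\Phi(a)^2$. Be aware that relying on \v Semrl's theorem makes Theorem~\ref{T.Abelian+} circular, since the paper derives it from this statement as an independent proof of \v Semrl's result. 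The paper's measure-theoretic argument avoids this.
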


The methods used in the proof have Lie-algebraic flavour and are related to those of \cite[Theorem~1.1]{fogang2024distinguishing}  where it was proved
that for (complex) \cstar-algebras $A$ and $B$, 
the existence of an isomorphism $\pi$ between  the Banach Lie groups  $\SU_0(A)$ and $\SU_0(B)$ as topological groups is equivalent to the existence of an isomorphism $\psi$ between $A$ and $B$ as real \cstar-algebras, and moreover that $\psi$ can be chosen so that it extends $\pi$. See also \cite{ando2025Lie}. In hindsight, the Ulam stability results of \cite{MV} and \cite{Fa:All} implicitly used Lie-algebraic methods.

Lemma~\ref{L.pi-to-rho-and-Phi} below should be compared with the method used in \cite{KanRe:Ulam}, where it was proved (among other things) that an approximate group homomorphism between certain Boolean algebras of the form $\cP(\bbN)/\cI$  that is (in a certain precise way) an approximate Boolean algebra homomorphism  is near a Boolean algebra homomorphism. 
The assumptions of this lemma will be secured by \cite[Theorem~2.1]{BMT}. 

While $\varphi$ in the following lemma  is an arbitrary function, the lemma will be applied in the proof of Lemma~\ref{L.pi-to-rho-and-Phi} in case when $\varphi$ is an approximate $*$-homomorphism. 

\begin{lemma}\label{L.FunctionfIsContinuous}
	Suppose that $X$ and $Y$ are locally compact Hausdorff spaces,  $\varphi\colon C_0(X)\to C_0(Y)$ and $f\colon Y\to X$ be functions such that $|\varphi(a)(y)-a(f(y))|<1/2$ for all $a\in C_0(X)$.  Then $\psi(a)(y)=a(f(y))$ defines a $*$-homomorphism from $C_0(X)$ to $C_0(Y)$ such that $\|\psi(a)-\varphi(a)\|\leq 1/2$ for all $a\in C_0(X)$. 
\end{lemma}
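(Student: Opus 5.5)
The plan is to check three things in turn. First, $\psi$ is a $*$-homomorphism at the level of functions $Y\to\bbC$. Second, $f$ is continuous. Third, $a\circ f$ vanishes at infinity for every $a\in C_0(X)$. The first is immediate: composition with a fixed map $f$ preserves sums, products, scalar multiples and complex conjugation pointwise. The estimate $\|\psi(a)-\varphi(a)\|\le 1/2$ follows from the hypothesis by taking the supremum over $y$, since the hypothesis gives $|\varphi(a)(y)-a(f(y))|<1/2$ at each point. So everything reduces to showing that $\psi(a)\in C_0(Y)$.

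\emph{Continuity of $f$.} I would argue by contradiction using nets.
\begin{enumerate}
\item Suppose $y_\lambda\to y$ in $Y$ but $f(y_\lambda)\not\to x:=f(y)$.
\item Pass to a subnet and choose an open neighbourhood $U$ of $x$ with $f(y_\lambda)\notin U$ for all $\lambda$.
\item By Urysohn's lemma for locally compact Hausdorff spaces, pick $a\in C_c(X)$ with $0\le a\le 1$, $a(x)=1$ and $\supp a\subseteq U$.
\item The hypothesis gives $|\varphi(a)(y)-1|<1/2$, so $\operatorname{Re}\varphi(a)(y)>1/2$.
\item Since $a(f(y_\lambda))=0$, the hypothesis also gives $|\varphi(a)(y_\lambda)|<1/2$ for all $\lambda$.
\item But $\varphi(a)$ is a continuous function on $Y$, so $\varphi(a)(y_\lambda)\to\varphi(a)(y)$. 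This forces $|\varphi(a)(y)|\le 1/2$, contradicting the previous bound.
\end{enumerate}
Hence $f$ is continuous, and therefore $a\circ f$ is continuous for every $a\in C_0(X)$.

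\emph{Vanishing at infinity.} The key point is that the hypothesis holds for all of $C_0(X)$, not just its unit ball, so I may rescale. Fix $a\in C_0(X)$ and $\delta>0$, and pick $k>0$ with $k\delta>1$. Applying the hypothesis to $ka$ gives
\[
\{y: |a(f(y))|\ge\delta\}\subseteq\{y: |\varphi(ka)(y)|\ge k\delta-1/2\}.
\]
The right-hand side is compact, because $\varphi(ka)\in C_0(Y)$ and $k\delta-1/2>0$. The left-hand side is closed, by continuity of $a\circ f$. A closed subset of a compact set is compact, so $a\circ f\in C_0(Y)$.

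I expect the continuity of $f$ to be the only step needing care. The subtlety is getting a strict gap between the values near $y$ and the value at $y$; this is exactly what the constant $1/2$ provides. The scaling trick in the last step is routine once one notices that the hypothesis is unrestricted in norm.
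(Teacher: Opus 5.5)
Your proof is correct and follows the paper's argument for the continuity of $f$ essentially step for step: a net $y_\lambda\to y$, a subnet avoiding a neighbourhood $U$ of $f(y)$, and a bump function supported in $U$ (the paper cites Tietze, you cite Urysohn for locally compact spaces). The contradiction then comes from the $1/2$ gap, and your handling of complex values via $|\cdot|$ and $\operatorname{Re}$ is cleaner than the paper's $b(y'_\lambda)<1/2<b(y)$. Your scaling argument showing $a\circ f\in C_0(Y)$ is a genuine addition. The paper simply invokes Gelfand--Naimark duality at that point, but for non-compact $Y$ a continuous $f$ induces a map into $C_0(Y)$ only if it is proper. Applying the hypothesis to $ka$ for large $k$ is exactly what secures this.
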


\begin{proof}
	We claim that the function $f\colon Y\to X$ is continuous. 
	Otherwise,  there is a net $(y_\lambda)$ in $Y$ such that $y=\lim_\lambda y_\lambda$ exists but $\lim_\lambda f(x_\lambda)$ either does not exist or it exists but is not equal to $f(y)$. In either case, there are an open neighbourhood $U$ of $f(y)$ and a subnet $(y'_{\lambda})$ of $(y_\lambda)$ such that $f(y'_{\lambda})\notin U$ for all $\lambda$. By the Tietze extension theorem, there is $a\in C_0(X)$ such that $0\leq a\leq 1$, $\supp(a)\subseteq U$, and $a(f(y))=1$.  Then $b=\varphi(a)$ satisfies  $|b(z) -a(f(z))|<1/2$ for all $z\in Y$. In particular $b(y'_{\lambda})<1/2$ for all $\lambda$ and $b(y)>1/2$, hence $b$ cannot be continuous; contradiction. 
	
	Therefore, $f\colon Y\to X$ is continuous and by the Gelfand--Naimark duality it determines a $*$-homomorphism $\psi\colon C_0(X)\to C_0(Y)$ with the required properties. 
\end{proof}

\begin{lemma} \label{L.pi-to-rho-and-Phi} 
	There exists a function $h(\varepsilon)\to 0$ as $\varepsilon\to 0$ with the following property for sufficiently small $\varepsilon$. 
	Suppose that $X$ is compact and Hausdorff and that $\pi\colon U(C(X))\to U(B)$ is a group homomorphism such that some $\varepsilon$-$*$-homomorphism $\varphi\colon C(X)\to B$ satisfies the following.  
	\begin{enumerate}
		\item If $x\in C(X)$ and $\|x\|\leq 1$ then $\|\varphi(x)\|\leq 1$,
		\item $\varphi[C(X)_{\textrm{sa}}]\subseteq B_{\textrm{sa}}$,   $\varphi(0)=0$, and $\varphi(1)=\pi(1)=1$, 
		\item $\sup_{u\in U(C(X))} \|\varphi(u)-\pi(u)\|\leq \varepsilon+120\varepsilon^2$.   
	\end{enumerate} 
	Then there is a $*$-homomorphism $\Phi\colon C(X)\to B$ such that  $\Phi(u)=\pi(u)$ for all $u\in U(C(X))$ and 
	\(
	\sup_{\|a\|\leq 1} \|\Phi(a)-\varphi(a)\|\leq h(\varepsilon)\). 
\end{lemma}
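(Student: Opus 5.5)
The plan is to push everything into the abelian \cstar-algebra generated by the range of $\pi$, and there build $\Phi$ by hand, one character at a time. Since $U(C(X))$ is abelian, $\pi[U(C(X))]$ consists of commuting unitaries. So $D=\cstar(\pi[U(C(X))])\subseteq B$ is a unital abelian \cstar-algebra, and $D\cong C(Y)$ for some compact Hausdorff $Y$. I will construct $\Phi$ with values in $D$.

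\emph{Step 1: the map $\rho$.} For a self-adjoint contraction $a\in C(X)$, the unitary $\exp(ia)$ has spectrum in the arc $\{e^{it}:|t|\le 1\}$. By hypothesis (3) and Lemma~\ref{L.g} (or Lemma~\ref{L.Corrected}), $\pi(\exp(ia))$ is within $O(\varepsilon)+g(\varepsilon)$ of $\varphi(\exp(ia))\approx\exp(i\varphi(a))$. Hence its spectrum avoids $-1$ for small $\varepsilon$, and
\[
\rho(a)=-i\log\pi(\exp(ia))\in D_{\textrm{sa}}
\]
is well defined, with $\rho(a)\approx_{h_1(\varepsilon)}\varphi(a)$ by continuity of $\log$ away from $-1$ and Lemma~\ref{L.g}(2). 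Because $C(X)$ is commutative, $\exp(i(a+b))=\exp(ia)\exp(ib)$, and $\pi$ is a homomorphism. Taking logarithms of commuting unitaries near $1$ therefore gives $\rho(a+b)=\rho(a)+\rho(b)$ whenever $\|a\|,\|b\|,\|a+b\|$ are small, say at most $1/2$. So $\rho$ is additive on a small ball, and it is bounded. It therefore extends uniquely to a bounded $\bbR$-linear map $C(X)_{\textrm{sa}}\to D_{\textrm{sa}}$, and then complex-linearly to $\rho\colon C(X)\to D$. The extended map is still uniformly close to $\varphi$ on $A_1$, up to a constant factor from rescaling. In particular $\rho(1)\approx\varphi(1)=1$.

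\emph{Step 2: points of $Y$ give characters of $C(X)$.} Fix $y\in Y$ and set $\ell_y(a)=\rho(a)(y)$, a bounded linear functional on $C(X)$.
\begin{itemize}
\item Since $\rho\approx\varphi$ and $\varphi$ is an $\varepsilon$-$*$-homomorphism, $|\ell_y(ab)-\ell_y(a)\ell_y(b)|\le\delta$ for all $a,b\in C(X)_1$, with $\delta=\delta(\varepsilon)\to 0$.
\item Both sides are bilinear. Replacing $a,b$ by $ta,tb$ and letting $t\to 0$ shows that the error scales like $t^2$ while the bound stays $\delta$, so $\ell_y$ is exactly multiplicative.
\item Also $\ell_y(1)\approx 1\neq 0$, so $\ell_y$ is a character. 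Hence $\ell_y=\mathrm{ev}_{f(y)}$ for a unique $f(y)\in X$.
\end{itemize}
Thus $\rho(a)(y)=a(f(y))$, so $\rho$ is itself a $*$-homomorphism $C(X)\to C(Y)=D$. (Alternatively, Lemma~\ref{L.FunctionfIsContinuous} applied to $\rho$ and $f$ gives continuity of $f$ and the homomorphism directly.) Set $\Phi=\rho$. Then $\sup_{\|a\|\le1}\|\Phi(a)-\varphi(a)\|\le h(\varepsilon)$, and by construction $\Phi(\exp(ia))=\pi(\exp(ia))$ for small self-adjoint $a$. Since both sides are multiplicative in $a$, this holds for all $a\in C(X)_{\textrm{sa}}$, that is, on the connected component $U_0(C(X))$.

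\emph{Step 3 (main obstacle): unitaries outside $U_0$.} $U(C(X))$ need not be connected, and the statement asks for $\Phi(u)=\pi(u)$ for every unitary $u$. For $y\in Y$, compare the two homomorphisms
\[
u\mapsto\pi(u)(y)\quad\text{and}\quad u\mapsto u(f(y))=\Phi(u)(y),
\]
both from $U(C(X))$ to $\bbT$. Their quotient $\chi$ is a homomorphism that is trivial on $U_0$. Moreover $\pi(u)\approx\varphi(u)\approx\Phi(u)$ uniformly, so $|\chi(u)-1|\le\eta(\varepsilon)<1$ for all $u$. A homomorphism into $\bbT$ whose whole range lies in a small arc around $1$ must be trivial, since otherwise some power $\chi(u)^n=\chi(u^n)$ would leave the arc. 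Hence $\chi\equiv 1$, and $\Phi(u)=\pi(u)$ for all $u\in U(C(X))$.

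The delicate points are the spectral bookkeeping in Step 1: the spectrum of $\pi(\exp(ia))$ must avoid $-1$, and logarithms of commuting near-identity unitaries must be additive. This is what forces $\varepsilon$ to be sufficiently small.
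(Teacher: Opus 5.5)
There is a genuine gap in Step~2. The defect $E(a,b)=\ell_y(ab)-\ell_y(a)\ell_y(b)$ is bilinear, so $E(ta,tb)=t^2E(a,b)$. A bound $|E|\le\delta$ on the unit ball is therefore just the scale-invariant bound $|E(a,b)|\le\delta\|a\|\|b\|$. Replacing $a,b$ by $ta,tb$ with $t\to 0$ only yields $|E(a,b)|\le\delta/t^2$, which gets worse. To force $E=0$ you would need the bound as $t\to\infty$, outside the unit ball, where you know nothing.

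In fact no argument using only linearity, approximate multiplicativity and $\ell_y(1)\approx 1$ can succeed. The functional $\ell=(1-\delta)\mathrm{ev}_x$ has all three properties and is not a character. Up to this point you have used only approximate information (through $\rho\approx\varphi$) and never the exactness of $\pi$, so no exact conclusion can come out. Steps~1 and~3 are fine. Evaluating $\rho$, which takes values in $D$, at points of $Y$ is a clean way to avoid the paper's replacement of $\varphi$ by a $C(Y)$-valued map.

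The missing idea is to use that $\pi$ is an exact homomorphism on \emph{all} unitaries, including $\exp(ia)$ with $\|a\|$ large. Your construction already gives $\exp(i\ell_y(a))=\pi(\exp(ia))(y)$ for every $a\in C(X)_{\textrm{sa}}$: write $\rho(a)=n\rho(a/n)$ and take $n$-th powers.

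The paper exploits this as follows.
\begin{enumerate}
\item The restriction of $\ell_y$ to $C(X)_{\textrm{sa}}$ is a real functional, so it is given by a signed measure.
\item Suppose $ab=0$ but $\ell_y(a)\ell_y(b)\neq 0$. Rescale $a,b$ (their norms may become huge) so that $\exp(i\ell_y(a))=\exp(i\ell_y(b))=-1$, and set $u=\exp(ia)$, $v=\exp(ib)$.
\item Then $(u-1)(v-1)=0$, so $(\varphi(u)-1)(\varphi(v)-1)$ has norm $O(\varepsilon)$. But $(\pi(u)-1)(\pi(v)-1)$ is $O(\varepsilon)$-close to it and takes the value $4$ at $y$, a contradiction.
\item Hence the measure is $r\delta_x$. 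Finally $r=1$, because $e^{isr}=\pi(e^{is}1)(y)$ must stay uniformly close to $e^{is}$ for all $s\in\bbR$.
\end{enumerate}

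Alternatively, cite Johnson's theorem that $C(X)$ is AMNM to get $\|\ell_y-\mathrm{ev}_x\|$ small for some $x\in X$. Then upgrade closeness to equality with your own Step~3 argument, applied to the $\bbT$-valued homomorphism $u\mapsto \pi(u)(y)\overline{u(x)}$. This homomorphism is uniformly close to $1$, hence trivial. Consequently $e^{it\ell_y(a)}=e^{ita(x)}$ for all $t\in\bbR$, which forces $\ell_y(a)=a(x)$.
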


\begin{proof}
	With $g$ as in Lemma~\ref{L.g},  let 
	\[
	h(\varepsilon)=\max(4g(\varepsilon),  g(\varepsilon)+\varepsilon+120\varepsilon^2, 4 (g(\varepsilon)+2\varepsilon+120\varepsilon^2)^{1/2}).
	\] 
	Fix $\varepsilon>0$ such that    $2 \varepsilon+120\varepsilon^2<1$, $\frac 32 \varepsilon+120\varepsilon^2<1$, and $h(\varepsilon)+\varepsilon+120\varepsilon^2<\sqrt 2$. 
	
	Fix $C(X)$, $B$, a group homomorphism  $\pi\colon U(C(X))\to B$ and an $\varepsilon$-$*$-homomorphism $\varphi\colon C(X)\to B$ as in the statement of this lemma.  
	We will prove that 
	there is a continuous group homomorphism $\rho\colon C(X)_{\textrm{sa}}\to B_{\textrm{sa}}$ such that $		\pi(\exp(ia))=\exp(i\rho(a))$
		for all $a$ in $C(X)_{\textrm{sa}}$.
	
	We first prove the existence of $\rho$ and the conclusion of the lemma under the additional assumption that $B=\bbC$. 
In this case, 	\begin{equation}\label{eq.a->pi(exp(ia))}
		a\mapsto \pi(\exp(ia)) 
	\end{equation}
	is a group homomorphism from $C(X)_{\textrm{sa}}$ into $\bbT$. 
	We claim that it is continuous. 
	Otherwise, there are $a_n\in C(X)$ such that $\lim_n a_n=0$ but $\lim_n \|\pi(\exp(ia_n))-1\|>0$. By compactness of $\bbT$, some $\lambda\in \bbT\setminus \{1\}$ and a subsequence (still denoted $a_n$) satisfy $\lim_n \pi(\exp(ia_n))=\lambda$. Since $\varepsilon+120\varepsilon^2<1<\sqrt 3$, there exists $m\geq 1$ such that $|\lambda^m-1|>2\varepsilon+120\varepsilon^2$. Thus all sufficiently large $n$ satisfy 
	\[
	\|\pi(\exp(i m a_n))-1\|>2\varepsilon+120\varepsilon^2.
	\]
	Lemma~\ref{L.approx.continuity} and $\varphi(1)=1$ together imply  that $\limsup_n \|\varphi(\exp(i m a_n))-1\|\leq \varepsilon$. Since $\pi(u)\approx_{\varepsilon+120\varepsilon^2} \varphi(u)$ for all $u\in U(C(X))$,  we have 
	$\limsup_n \|\pi(\exp(i m a_n))-1\|\leq 2\varepsilon+120\varepsilon^2$; contradiction.
	
	Fix $a\in C(X)_{\textrm{sa}}$. Then $t\mapsto \pi(\exp(ita))$ is a continuous homomorphism from $\bbR$ into $\bbT$. Since the Pontryagin dual of $\bbR$ is $\bbR$, there exists $\rho(a)\in \bbR$ such that 
	\[
	\pi(\exp(ita))=\exp(it \rho(a))
	\]
	for all $t\in \bbR$. 
	Clearly $\rho(sa+tb)=s\rho(a)+t\rho(b)$ for all $a,b$ in $C(X)_{\textrm{sa}}$ and $s,t$ in $\bbR$.  
	Since the homomorphism in \eqref{eq.a->pi(exp(ia))} is  continuous, so is $\rho$. 
	Thus, $\rho\colon C(X)_{\textrm{sa}}\to \bbC$ is a continuous linear functional. 
	By the Riesz representation theorem, there is a signed Borel measure $\mu$ on $X$ such that 
	\[
	\rho(a)=\int a\, d\mu
	\]
	for all $a$. We will prove that $\mu$ is a point-mass measure for some $x\in X$. 
	If the support of~$\mu$ has more than one element, then $X$ has two disjoint, $\mu$-positive,  open subsets $U$~and~$V$. Let $a$ and $b$ be such that $\supp(a)\subseteq U$, $\int a\,d\mu\neq 0$, $\supp(b)\subseteq V$, and $\int b\, d\mu\neq 0$. This is absurd by the following claim. 
	
	\begin{claim}
		For all $a$ and $b$ in $C(X)_{\textrm{sa}}$, if $ab=0$ then $\rho(a)\rho(b)=0$. 
	\end{claim}
	
	\begin{proof}
		Assume otherwise. Since $\rho$ is linear we may assume that $\rho(a)=\rho(b)=\pi$. (The norms of such $a$ and $b$ may be very large but that's fine.)
		Since $ab=0$, with $u=\exp(ia)$ and $v=\exp(ib)$ we have 
		\[
		(u-1)(v-1)=0. 
		\]
		Using $\varphi(0)=0$, $\|\varphi(u)\|\leq 1$ and $\|\varphi(v)\|\leq 1$, we have 
		\[
		0\approx_{4\varepsilon} (\varphi(u)-1)(\varphi(v)-1)\approx_{4 (\varepsilon+120\varepsilon^2)}
		(\pi(u)-1)(\pi(v)-1)=4.
		\]
		This contradicts the assumption that 
		$2 \varepsilon+120\varepsilon^2<1$. 
	\end{proof}
	
	Therefore $\mu$ is a point-mass measure, and there exist $x\in X$ and $r\in \bbR$ such that $\rho(a)=ra(x)$ for all $a\in C(X)_{\textrm{sa}}$. 
	
	\begin{claim}
		With $x$ and $r$ as in the previous paragraph we have $r=1$, hence $\rho(a)=a(x)$.  
	\end{claim}
	
	\begin{proof}
		Assume otherwise, that $r\neq 1$. Then 
		\[
		\sup_{s\in \bbR} |\exp(is)-\exp(irs)|=
		\sup_{s\in \bbR} |\exp(i(1-r)s)-1|=2. 
		\]
		However, for each $s\in \bbR$  we have  $\phi(\exp(is 1_{C(X)}))\approx_\varepsilon \exp(is)\varphi(1_{C(X)})=\exp(is)$. 
		On the other hand, $\pi(\exp(is 1_{C(X)}))=\exp(irs)$, contradicting $\pi(u)\approx_{\varepsilon+120\varepsilon^2}\varphi(u)$ for all unitaries~$u$ and the displayed formula. 
	\end{proof}
	%

	Fix $a\in C(X)_{\textrm{sa}}$ with $\|a\|\leq 1$. Then for all $|t|\leq 1$, with $g(\varepsilon)$ as in Lemma~\ref{L.g}  and $x$ such that $\rho(a)=a(x)$ we have 
	\[
	\exp(it \varphi(a))\approx_{g(\varepsilon)} \varphi(\exp (ita))\approx_{\varepsilon+120\varepsilon^2} \pi(\exp(ita))=\exp(it\rho(a))=\exp(it  a(x)). 
	\]
	By our assumptions $\|\varphi(a)\|\leq 1$,  thus 
	Lemma~\ref{L.exp} implies 
	\[
	\|\varphi(a)-a(x)\|\leq 2(g(\varepsilon)+2\varepsilon+120\varepsilon^2)^{1/2}, 
	\] 
	and the right-hand side of this inequality is not greater than 
	$h(\varepsilon)/2$. Therefore we have   $\varphi(a)\approx_{h(\varepsilon)/2}\rho(a)$.
	
	Let $\Phi(c)=c(x)$ for $c\in C(X)$. It is a $*$-homomorphism and $\Phi(a+ib)=\rho(a)+i\rho(b)$ for self-adjoint $a$ and $b$. 
	Hence $\Phi(c)\approx_{h(\varepsilon)} \varphi(c)$ for $\|c\|\leq 1$. 
	It remains to prove that $\Phi(u)=\pi(u)$ for all $u\in U(C(X))$. Assume otherwise, and fix $u$ for which this fails. By replacing $u$ with $u\pi(u)^{-1}$, we may assume $\pi(u)=1$ and $\Phi(u)\neq 1$. Let $m$ be such that $|\Phi(u)^m-1|>\sqrt 2$. 
	Then we have 
	\[
	1=\pi(u^m)\approx_{\varepsilon+120\varepsilon^2} \varphi(u^m)\approx_{h(\varepsilon)} \Phi(u^m),
	\]
	contradicting $\varepsilon+120\varepsilon^2+h(\varepsilon)<\sqrt 2$. 
	We therefore have $\pi(u)=\Phi(u)$ for all $u$,  
	completing the proof of Lemma  in case when $B=\bbC$. 
	
\medskip

	Now we consider the general case.  Since $\pi[U(C(X))]$ is an abelian group, the \cstar-algebra generated by it is isomorphic to $C(Y)$ for some compact Hausdorff space $Y$. Moreover, every contraction $f \in C(X)$ is a sum of four unitaries $f = u_1 + u_2 + u_3 + u_4$ in a canonical way, and this allows us to replace $\varphi(f)$ by $\pi(u_1) + \pi(u_2) + \pi(u_3) + \pi(u_4)$. This does change the image only up to $O(\varepsilon)$ and hence, we may assume without loss of generality that $\varphi$ takes values in $C(Y).$ 
    Thus, we have $\pi\colon U(C(X))\to U(C(Y))$ and $\varphi \colon C(X)\to C(Y)$ as in the statement of the lemma. 
    
	For each $y\in Y$ consider the evaluation function $\Eval_y\colon C(Y)\to \bbC$. 
	Then $\Eval_y \circ \pi$ and $\Eval_y \circ \varphi$ satisfy the assumptions of this lemma and by the already proved instances of this Lemma   there exists $x=f(y)$ in $X$ such that $\varphi_y(a)=a(f(y))$ agrees with $\Eval_y \circ \pi$ on the unitary group and $h(\varepsilon)$-approximates $\Eval_y \circ \varphi$ on the unit ball. 
	
	By Lemma~\ref{L.FunctionfIsContinuous}, the function $f$ is continuous and $\Phi(a)(y)=a(f(y))$ is a $*$-homomorphism from $C(X)$ into $C(Y)$. For every $a\in C(X)$ we have $\|\Phi(a)-\varphi(a)\|\leq \sup_{y\in Y} |\Phi(a)(y)-\varphi(a)(y)|\leq h(\varepsilon)$, as required. This finishes the proof.
\end{proof}

\begin{lemma} \label{L.pi-to-rho-and-Phi.nonabelian} 
	There exists a function $h(\varepsilon)\to 0$ as $\varepsilon\to 0$ with the following property for sufficiently small $\varepsilon$. 
	Suppose that $A$ is a unital \cstar-algebra and $\pi\colon U(A)\to U(B)$ is a group homomorphism such that some $\varepsilon$-$*$-homomorphism $\varphi\colon A\to B$ satisfies the following.  
	\begin{enumerate}
		\item If $x\in A$ and $\|x\|\leq 1$ then $\|\varphi(x)\|\leq 1$,
		\item $\varphi[A_{\textrm{sa}}]\subseteq B_{\textrm{sa}}$,   $\varphi(0)=0$, and $\varphi(1)=\pi(1)=1$, 
		\item $\sup_{u\in U(A)} \|\varphi(u)-\pi(u)\|\leq \varepsilon+120\varepsilon^2$.   
	\end{enumerate} 
	Then there is  a $*$-homomorphism $\Phi\colon A\to B$ such that $\Phi(u)=\pi(u)$ for all $u\in U(A)$ and $\pi(\exp(ia))=\exp(i\Phi(a))$ for all $a\in A_{\textrm{sa}}$, and finally 
	\[
	\sup_{\|a\|\leq 1} \|\Phi(a)-\varphi(a)\|\leq h(\varepsilon). 
	\]
\end{lemma}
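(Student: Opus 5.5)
Reduce to the abelian case, Lemma~\ref{L.pi-to-rho-and-Phi}, by restricting to the abelian \cstar-subalgebras generated by single self-adjoint elements. Then glue the resulting local homomorphisms using the group structure of $\pi$.

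Fix $a\in A_{\textrm{sa}}$ and let $C_a=C^*(a,1)\cong C(\sigma(a))$. The restrictions $\varphi|_{C_a}$ and $\pi|_{U(C_a)}$ satisfy the hypotheses of Lemma~\ref{L.pi-to-rho-and-Phi}. That lemma therefore gives a $*$-homomorphism $\Phi_a\colon C_a\to B$ with $\Phi_a(u)=\pi(u)$ for all $u\in U(C_a)$ and $\Phi_a\approx_{h(\varepsilon)}\varphi$ on the unit ball of $C_a$. Define $\Phi(a)=\Phi_a(a)$. Since $\Phi_a$ is a unital $*$-homomorphism, $\exp(i\Phi(a))=\Phi_a(\exp(ia))=\pi(\exp(ia))$, which is the required exponential identity.

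Well-definedness comes for free. If $b\in C_a$ is self-adjoint, then $\Phi_b$ and $\Phi_a|_{C_b}$ agree on $U(C_b)$, because both equal $\pi$ there. Unitaries span $C_b$, so the two maps agree, and hence $\Phi(b)$ does not depend on which singly generated algebra is used. In particular $\Phi(ta)=t\Phi(a)$ for all real $t$, $\Phi(f(a))=f(\Phi(a))$ for continuous $f$, and $\Phi(a)$ is $h(\varepsilon)$-close to $\varphi(a)$ whenever $\|a\|\le1$.

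The main obstacle is additivity, $\Phi(a+b)=\Phi(a)+\Phi(b)$ for non-commuting $a$ and $b$. I would use the Lie--Trotter formula together with the fact that $\pi$ is a group homomorphism. In $A$ we have $\exp(it(a+b))=\lim_n(\exp(ita/n)\exp(itb/n))^n$, so
\[
\pi(\exp(it(a+b)))=\pi\bigl((\exp(ita/n)\exp(itb/n))^n\bigr)=(\exp(it\Phi(a)/n)\exp(it\Phi(b)/n))^n.
\]
The right-hand side converges to $\exp(it(\Phi(a)+\Phi(b)))$ by Trotter's formula in $B$. The difficulty is passing to the limit on the left, which requires continuity of $\pi$ in norm; $\pi$ is not assumed continuous.

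Continuity near $1$ should follow from closeness to $\varphi$ combined with a power trick, as in the case $B=\bbC$. If $u_n\to1$, then $\varphi(u_n^m)\to1$ up to $O(\varepsilon)$ uniformly in $m$. More cleanly, for unitaries $u=\exp(ic)$ with $\|c\|$ small, $\pi(u)=\exp(i\Phi(c))$, and $\|\Phi(c)\|$ is controlled as follows. For $\|c\|\le\delta$, write $\Phi(c)=\delta\,\Phi(c/\delta)$; then $\|\Phi(c)\|\le\delta(1+h(\varepsilon))$ by closeness to $\varphi$ on the unit ball. So $\pi$ is Lipschitz near the identity and hence continuous. The left-hand side then converges, giving $\exp(it\Phi(a+b))=\exp(it(\Phi(a)+\Phi(b)))$ for all $t$. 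Differentiating at $t=0$ yields additivity.

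It remains to assemble the algebraic properties.
\begin{enumerate}
\item $\Phi$ is now real-linear on $A_{\textrm{sa}}$; extend it complex-linearly to $A$.
\item Jordan multiplicativity follows from $\Phi(a^2)=\Phi(a)^2$, which holds because $\Phi_a$ is a $*$-homomorphism on $C_a$. So $\Phi$ is a Jordan $*$-homomorphism.
\item To upgrade to a $*$-homomorphism, I would use $\pi(uv)=\pi(u)\pi(v)$ on exponentials and compare second-order terms, $\exp(isa)\exp(itb)$ versus its image. This gives $\Phi(ab-ba)=[\Phi(a),\Phi(b)]$, which together with the Jordan property yields multiplicativity.
\end{enumerate}

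Finally, every unitary is a product of exponentials only in $U_0(A)$, so the identity $\Phi(u)=\pi(u)$ still has to be checked for general $u$. Since $u$ is normal, $u\in C^*(u,1)$, and the abelian lemma applied there gives $\Phi(u)=\pi(u)$. For $\|c\|\le1$, write $c=a+ib$; the closeness $\Phi(c)\approx\varphi(c)$ then follows from the self-adjoint bounds, with $h$ enlarged by a constant factor.
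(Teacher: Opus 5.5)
Your proposal is correct, and its skeleton is the paper's. You apply Lemma~\ref{L.pi-to-rho-and-Phi} on abelian subalgebras and glue, using that a $*$-homomorphism there is pinned down by $\pi$ on unitaries. You get continuity of $\pi$ from $\pi(\exp(ic))=\exp(i\Phi(c))$ and $\|\Phi(c)\|\le\|c\|$; contractivity of the $*$-homomorphism $\Phi_c$ gives this bound directly, without $h(\varepsilon)$. Then you derive additivity on $A_{\textrm{sa}}$ from Lie--Trotter, extend complex-linearly, and verify $\Phi(u)=\pi(u)$ inside $C^*(u)$.

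The one real divergence is multiplicativity. The paper uses neither the Jordan identity nor a second-order expansion. Once $\Phi$ is linear and agrees with $\pi$ on all of $U(A)$, writing $x=\sum_i\alpha_iu_i$ and $y=\sum_j\beta_jv_j$ gives $\Phi(xy)=\sum_{i,j}\alpha_i\beta_j\pi(u_i)\pi(v_j)=\Phi(x)\Phi(y)$, since $\pi$ is a group homomorphism. Your route is Jordan from $\Phi(a^2)=\Phi(a)^2$, plus $\Phi(i[a,b])=i[\Phi(a),\Phi(b)]$ from the $st$-coefficient of $-i\log(\exp(isa)\exp(itb))$. This is valid and is the standard Lie-algebraic argument: the differential of $\pi$ is a Lie homomorphism on $iA_{\textrm{sa}}$, and together with the Jordan identity that forces a $*$-homomorphism. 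To make it rigorous you should state that $\Phi$ is bounded on $A_{\textrm{sa}}$, so it passes through the convergent BCH series, and that $\exp$ is injective on self-adjoints of small norm, so the two logarithms can be identified.

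In your write-up, however, this step is redundant. Your last paragraph already proves $\Phi(u)=\pi(u)$ for every unitary, and moving that check ahead of step (3) gives multiplicativity in one line, exactly as in the paper.

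The remaining differences are cosmetic:
\begin{enumerate}
\item Differentiating $\exp(it\Phi(a+b))=\exp(it(\Phi(a)+\Phi(b)))$ at $t=0$ is a cleaner finish than the paper's ``if the norms are small, take logarithms.''
\item You get the final estimate from the self-adjoint bounds of the abelian lemma (about $2h(\varepsilon)+2\varepsilon$), rather than from $\pi(u)\approx\varphi(u)$ on unitaries.
\end{enumerate}

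One small slip: your preliminary remark that $\varphi(u_n^m)\to 1$ up to $O(\varepsilon)$ uniformly in $m$ is false (take scalar $u_n=e^{i/n}$ and $m\approx \pi n$). Nothing in your argument depends on it.
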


\begin{proof} By 
	Lemma~\ref{L.pi-to-rho-and-Phi},   for every abelian \cstar-subalgebra $D$ of $A$ there is  a $*$-homomorphism $\Phi_D\colon D\to B$  as required.  Since every  self-adjoint element  belongs to an abelian \cstar-subalgebra, we may define $\Phi \colon A_\textrm{sa}\to B_{\textrm{sa}}$ by $\Phi(a)=\Phi_D(a)$ where $D$ is an arbitrary abelian \cstar-subalgebra of $A$ such that $a\in D$. Note that $\Phi(a)$ does not depend on the choice of $D$ because $\exp(i\Phi_D(ta))=\pi(\exp(ita))$ for $t \in \mathbb R$ determines it uniquely. 

	As a consequence, $\pi$ is continuous. Indeed, if $\|u-v\|\leq \delta$ for unitaries $u$ and $v$, then $\|\pi(u)-\pi(v)\|=\|\pi(uv^*)-1\|=\|\pi(\exp(i a)) -1\|$ for some self-adjoint $a$ with $\|a\|=O(\delta)$. Since $\pi(\exp(i a))=\exp(i \Phi(a))$, we have $\|\pi(u)-\pi(v)\|=O(\delta)$ as well.

	We now prove that $\Phi \colon A_{\textrm{sa}} \to B_{\textrm{sa}}$ is linear. This is a consequence of the well-known Lie--Trotter product formula, or rather its special case for bounded operators.  It asserts that if $a$ and $b$ are skew-adjoint bounded linear operators on $H$, then  (see \cite[Exercise 15, p. 67]{Trotter})
	\begin{equation}
		\label{eq.Trotter}
		\exp(a+b)=\lim_{n\to \infty} (\exp(a/n)\exp(b/n))^n, 
	\end{equation}
	where the convergence is in the operator norm topology.

	For every self-adjoint operator $a$, we have that 
	$
	\pi(\exp(ia))=\exp(i\Phi(a)). 
	$
 	Fix such $a, b \in A_{\textrm{sa}}$. 
	By applying \eqref{eq.Trotter} in $A$, then 
	by continuity  of $\pi$ and applying \eqref{eq.Trotter} in $B$, we obtain the following. 
	\begin{align*}
		\exp(i\Phi(a)+i\Phi(b))&=\lim_{n\to \infty} (\exp(i\Phi(a)/n)\exp(i\Phi(b)/n))^n\\
		&=\lim_{n\to \infty} \pi((\exp(ia)/n)\exp(ib/n))^n)\\
		&=\pi(\exp(i(a+b)))\\
		&=\exp(i\Phi(a+b)). 
	\end{align*}
	If the norms of $\Phi(a)$, $\Phi(b)$, and $\Phi(a+b)$ are all small, then by taking logarithms we conclude that $\Phi(a)+\Phi(b)=\Phi(a+b)$. Since $\Phi(ta)=t\Phi(a)$ for all $a$ and scalars $t$ (this follows because $\Phi$ is a $*$-homomorphism on $\cst(a)$), we conclude that $\Phi \colon A_{\textrm{sa}} \to B_{\textrm{sa}}$ is $\mathbb R$-linear.

	We now extend $\Phi$ to a $*$-preserving complex linear map on $A$ by defining $\Phi(a+ib)=\Phi(a)+i\Phi(b)$ for self-adjoint $a$ and $b$. 

We claim that $\Phi(u)=\pi(u)$ for all unitaries $u\in A$.
Indeed, let $D=C^*(u)$. Then $D$ is abelian, and by construction
$\Phi$ agrees with $\Phi_D$ on $D_{\rm sa}$. Since both maps are complex
linear, they agree on all of $D$. Hence
$
\Phi(u)=\Phi_D(u)=\pi(u).
$

Since every element of $A$ can be presented as a linear combination of four unitaries, 
	$\Phi$ is multiplicative. It follows that $\Phi$ is a $*$-homomorphism. Similarly, for every $a\in A$ with $\|a\|\leq 1$ and since $\Phi(u)=\pi(u) \approx_{\varepsilon + 120 \varepsilon^2} \varphi(u)$ for all unitaries $u\in U(A)$, we have $\|\Phi(a)-\varphi(a)\|\leq 3(\varepsilon + 120 \varepsilon^2)$, as required.
\end{proof}

\begin{proof}[Proof of Theorem~\ref{T.Ulam2}]
	Fix $\varepsilon>0$ and $h(\varepsilon)$ as in Lemma~\ref{L.pi-to-rho-and-Phi}. 
	Fix a unital \cstar-algebra $A$, \cstar-algebra $B$,  an $\varepsilon$-$*$-homomorphism $\varphi\colon A \to B$, and a group homomorphism $\pi\colon U(A)\to U(B)$ such that $\sup_{u\in U(A)} \|\varphi(u)-\pi(u)\|\leq \varepsilon+120\varepsilon^2$. Assume the additional conditions ensured by Corollary \ref{C.UlamStableCorrected}.
	
	By Lemma~\ref{L.pi-to-rho-and-Phi.nonabelian}, 
	there exists a $*$-homomorphism $\Phi\colon A\to B$ such that $\Phi(u)=\pi(u)$ for all $u\in U(A)$ and 
	\(\sup_{\|a\|\le 1}\|\varphi(a)-\Phi(a)\|\le f(\varepsilon)\)
	as required. 
\end{proof}

An immediate application of these results is the following theorem, where by $\sfCOM$ we denote  the class of all abelian \cstar-algebras. 

\begin{theorem} \label{T.Abelian+} Each of the following pairs is Ulam stable. 
	\begin{enumerate}
		\item \label{U.Abelian.vN} $(\sfCOM,\sfvN)$. 
		\item \label{U.Abelian.Abelian}
		$(\sfCOM,\sfCOM)$. 
	\end{enumerate}
\end{theorem}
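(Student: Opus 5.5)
The plan is to reduce Theorem~\ref{T.Abelian+} to Theorem~\ref{T.Ulam2}. The only missing ingredient is a genuine group homomorphism $\pi\colon U(A)\to U(B)$ close to $\varphi$ on unitaries, and this will come from Ulam stability of amenable groups \cite{BMT,Kaz:epsilon}.

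\emph{Reductions.} First pass to unital algebras. A nonunital abelian $A=C_0(X)$ embeds into its unitization $C(X^+)$. Any $\varepsilon$-$*$-homomorphism defined on $A$ can be extended to $\tilde A$ by $\varphi(a+\lambda 1)=\varphi(a)+\lambda p$, where $p$ is a projection near $\varphi$ of an approximate unit. Alternatively, and more simply, we can cut down to a corner. A von Neumann algebra target is unital. If an abelian target is nonunital, we work in its unitization; the resulting $*$-homomorphism lands back in $B$ after compression by the projection close to $\varphi(1)$.

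By Corollary~\ref{C.UlamStableCorrected} we may assume \ref{Corrected.norm}--\ref{Corrected.log}. We next arrange $\varphi(1)=1$. Since $\varphi(1)$ is an approximate self-adjoint idempotent, functional calculus gives a projection $p\in B$ with $\|p-\varphi(1)\|=O(\varepsilon)$. Compress to $pBp$ (this is again a von Neumann algebra, respectively abelian), replacing $\varphi$ by $p\varphi p$ and renormalizing. This changes $\varphi$ by $O(\varepsilon)$.

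\emph{The group homomorphism.} The unitary group $U(A)$ of an abelian $A$ is an abelian, hence amenable, discrete group. The restriction $\varphi|_{U(A)}$ takes values within $O(\varepsilon)$ of $U(B)$: indeed $\varphi(u)^*\varphi(u)\approx\varphi(u^*u)=1$. Its unitary polar part therefore gives an $O(\varepsilon)$-homomorphism $U(A)\to U(B)$ in operator norm.

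By \cite[Theorem~2.1]{BMT}, an $\delta$-homomorphism from an amenable discrete group into the unitary group of a von Neumann algebra with operator norm is within $\delta+O(\delta^2)$ of a true homomorphism $\pi$. This is precisely the source of the constant $\varepsilon+120\varepsilon^2$ in Theorem~\ref{T.Ulam2}; after rescaling $\varepsilon$ by a universal constant, the hypothesis of Theorem~\ref{T.Ulam2} is met. Theorem~\ref{T.Ulam2} then produces the $*$-homomorphism $\Phi$ with $\sup_{\|a\|\le1}\|\varphi(a)-\Phi(a)\|\le h(O(\varepsilon))$, which proves \eqref{U.Abelian.vN}.

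\emph{The abelian target.} For \eqref{U.Abelian.Abelian} the target $C(Y)$ need not be a von Neumann algebra. There are two options. The first is to embed $C(Y)$ into the abelian von Neumann algebra $\ell_\infty(Y)$, apply \eqref{U.Abelian.vN}, and then note that the resulting $\Phi$ is given pointwise by a function $f\colon Y\to X$ with $|\varphi(a)(y)-a(f(y))|$ small. Lemma~\ref{L.FunctionfIsContinuous} then shows $f$ is continuous, so $\Phi$ actually maps into $C(Y)$. The second option is to apply the BMT theorem directly with target the abelian group $U(C(Y))$, using the amenability of the target in the proof of \cite{BMT}. The first route is cleaner and I would use it.

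The main obstacle is matching the hypotheses of \cite[Theorem~2.1]{BMT}: it needs the target to be a von Neumann algebra (or injective, via an invariant mean argument with weak$^*$ compactness) and an honest unitary-valued almost homomorphism. I would carefully check the polar-decomposition correction and the constant bookkeeping so that the resulting distance is $\le\varepsilon'+120\varepsilon'^2$ for some $\varepsilon'=O(\varepsilon)$.
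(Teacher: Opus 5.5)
Your proposal is essentially the paper's proof. The unital case applies \cite[Theorem~2.1]{BMT} (with a von Neumann algebra target) to the amenable discrete group $U(A)$ to get $\pi$, and then Theorem~\ref{T.Ulam2}. The nonunital case is reduced to the unital one by extending to the unitization as in Lemma~\ref{L.extend.from.ideal}. For abelian targets, your passage through $\ell_\infty(Y)$ is the paper's coordinatewise use of $\Eval_y\circ\varphi$ followed by Lemma~\ref{L.FunctionfIsContinuous}.

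Your compression to a projection near $\varphi(1)$ and your polar-decomposition step supply details the paper leaves implicit. Two points need tightening. The projection used for the unitization must come from the ultraweak limit of $\varphi(e_\lambda)$, since no single $\varphi(e_\lambda)$ need be near a projection. Unitarity of the polar part also needs $\varphi(u)\varphi(u)^*\approx 1$, not only $\varphi(u)^*\varphi(u)\approx 1$; this follows from \ref{Corrected.*}.
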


\begin{proof} \eqref{U.Abelian.vN} 
	We first prove the unital version of \eqref{U.Abelian.vN}, that $(\sfUC, \sfvN)$ is Ulam stable. 
	
	We will prove that $f_{\sfUC}=h$ as in Lemma~\ref{L.pi-to-rho-and-Phi} and Theorem~\ref{T.Ulam2} witnesses Ulam stability of abelian \cstar-algebras with respect to von Neumann algebras. 
	 Fix $\varepsilon>0$, an abelian unital separable \cstar-algebra $A$, von Neumann algebra $M$,  and an $\varepsilon$-$*$-homomorphism $\varphi\colon A \to M$. 
We claim that there exists a group homomorphism $\psi\colon U(A)\to U(M)$ such that
\[\sup_{\|a\|\le 1}\|\varphi(a)-\psi(a)\|\le \varepsilon+120\varepsilon^2.\]
Since $U(A)$ is amenable, the case when $M=B(H)$ is 
\cite[Theorem~2.1]{BMT}, also \cite{Kaz:epsilon}, but it is clear that the integration argument in the proof uses only the assumption that $M$ is a von Neumann algebra. 
By Theorem~\ref{T.Ulam2}, 
 there exists a $*$-homomorphism $\psi\colon A\to M$ such that
$\sup_{\|a\|\le 1}\|\varphi(a)-\psi(a)\|\le h(\varepsilon)$, as required. 

This proves the unital case, and 
it remains to prove that $(\sfCOM,\sfvN)$ is Ulam stable. 
In Theorem~\ref{T.Closure}\eqref{I.Ideals} below we will prove that if a class $\sfC$ is Ulam stable with respect to $\sfvN$, then so is the class of all two-sided, norm-closed ideals of \cstar-algebras in $\sfC$, and the full result  follows.

\eqref{U.Abelian.Abelian}
Suppose that $\varphi\colon C_0(X)\to C_0(Y)$ is an $\varepsilon$-$*$-homomorphism between unital abelian \cstar-algebras for  $\varepsilon>0$ that is sufficiently small as per \eqref{U.Abelian.vN}. Then  for every $y\in Y$  the $\varepsilon$-$*$-homomorphism $\Eval_y\circ \varphi$ can be $h(\varepsilon)$-approximated by a $*$-homomorphism $\Phi_y$. Let $f(y)\in X$ be such that $\Phi_y(a)=a(f(y))$ for all $a\in C_0(X)$. By Lemma~\ref{L.FunctionfIsContinuous}, the function $f$ is continuous and it determines a $*$-homomorphism $\Phi\colon C_0(X)\to C_0(Y)$ such that $\|\Phi(a)-\varphi(a)\|\leq h(\varepsilon)$ for all $a\in C_0(X)$. 
\end{proof}

\section{Simple closure properties}\label{S.SimpleClosure}

In this section we focus on Ulam stability with respect to von Neumann algebras,  which we call Ulam stability for the sake of brevity.

\begin{theorem}
\label{T.Closure}
The class of Ulam stable $C^*$-algebras is closed by taking:
\begin{enumerate}
\item \label{I.Ideals} norm-closed,  two-sided,  ideals,
\item \label{I.Quotients} quotients by norm-closed,  two-sided ideals,
\item \label{I.Extensions} extensions,
\item \label{I.Pullback} pullbacks where one map is surjective,
\item \label{thm:Johnson} inductive limits, and
\item \label{I.Tensor}tensor products with abelian \cstar-algebras.
\end{enumerate}
Moreover, if $\sfC$ is a class of \cstar-algebras such that the pair $(\sfC,\sfvN)$ is Ulam stable, then so is each of the classes $\sfC'$obtained from $\sfC$ by applying any of the operations listed above, with a possibly different witnessing function.
\end{theorem}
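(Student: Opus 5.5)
Each of the six operations gets its own argument, all resting on two facts about von Neumann algebra targets. First, by Corollary~\ref{C.UlamStableCorrected} we may assume every $\varepsilon$-$*$-homomorphism satisfies (C1)--(C7). Second, every von Neumann algebra $M$ is closed under strong limits and has projections and support projections available. A preliminary remark we will use repeatedly: Ulam stability of $A$ into all $M\in\sfvN$ passes to unitizations. Given $\varphi\colon A\to M$, extend it to $\tilde\varphi(a+\lambda 1)=\varphi(a)+\lambda 1$. This map is an $O(\varepsilon)$-$*$-homomorphism on the unit ball, up to controlling $\|a\|$ by $\|a+\lambda1\|$. Conversely, a unital problem is reduced to the non-unital one by cutting down by a projection close to $\varphi(1)$, obtained by functional calculus.

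\emph{Ideals \eqref{I.Ideals}.} Let $J\triangleleft A$ with $A\in\sfC$, and let $\varphi\colon J\to M$ be given. We extend $\varphi$ to an approximate homomorphism on $A$ as follows. Take an approximate unit $(e_\lambda)$ of $J$, and let $p$ be the support projection of the strong limit points of $\varphi(e_\lambda)$; more precisely, fix an ultrafilter and take weak$^*$ limits. Define $\bar\varphi(a)=\WOTlim_\lambda \varphi(ae_\lambda)$. Approximate multiplicativity of $\varphi$ on $J$ should give that $\bar\varphi$ is an $O(\varepsilon)$-$*$-homomorphism $A\to pMp$. Stability of $A$ then gives $\Psi$, and restricting $\Psi$ to $J$ gives the required map. \textbf{This is the main obstacle.} Weak limits do not preserve the norm estimates for products: $\bar\varphi(ab)$ versus $\bar\varphi(a)\bar\varphi(b)$ needs $\varphi(ae_\lambda be_\mu)\approx\varphi(ae_\lambda)\varphi(be_\mu)$, uniformly in $\lambda,\mu$ after passing to iterated limits. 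We would handle this with iterated ultralimits and the fact that norm balls are weak$^*$ closed. Alternatively, we would use that $\varphi$ is defined on $J$ and that $\varphi(ae)\approx\varphi(a)\varphi(e)$ whenever $a,e\in J$.

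\emph{Quotients \eqref{I.Quotients}.} Given $\varphi\colon A/J\to M$, compose with the quotient map $q$. Then $\varphi\circ q$ is an $\varepsilon$-$*$-homomorphism on $A$, so it is approximated by a genuine $*$-homomorphism $\Psi$. On $J$, $\Psi$ is within $f(\varepsilon)$ of $\varphi(0)=0$, so $\Psi(J)$ has norm at most $f(\varepsilon)<1$. Since $\Psi$ restricted to $J$ is a $*$-homomorphism, this forces $\Psi(J)=0$, so $\Psi$ factors through $A/J$.

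\emph{Extensions \eqref{I.Extensions}.} Suppose $0\to J\to A\to A/J\to0$ with $J$ and $A/J$ stable, and let $\varphi\colon A\to M$ be given. First approximate $\varphi|_J$ by a $*$-homomorphism $\psi_J$. This extends canonically and strictly continuously to $A$, via $\bar\psi_J(a)=$ the strong limit of $\psi_J(ae_\lambda)$, with range in $pMp$ where $p$ is the support projection of $\psi_J(J)$. Next, the compression $(1-p)\varphi(\cdot)(1-p)$ is an approximate homomorphism vanishing approximately on $J$. We correct it to vanish exactly on $J$ by a cutting argument and apply stability of $A/J$ in $(1-p)M(1-p)$. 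The sum of the two pieces approximates $\varphi$; off-diagonal corners are small because $\varphi(a)p\approx\bar\psi_J(a)$, estimated via $\varphi(ae_\lambda)$.

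\emph{Pullbacks \eqref{I.Pullback}.} A pullback $A\oplus_C B$ with $B\to C$ surjective is an extension of $A$ by $\ker(B\to C)$. That kernel is an ideal of $B$, so this case follows from the ideal and extension cases.

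\emph{Inductive limits \eqref{thm:Johnson}.} Approximate $\varphi|_{A_n}$ by $*$-homomorphisms $\psi_n$ within $f(\varepsilon)$. Take a point-weak$^*$ ultralimit of the $\psi_n$ on $\bigcup_n A_n$ and extend by continuity. This is Johnson's argument, and pointwise weak$^*$ limits of $*$-homomorphisms need care. A cleaner route is to do this inside $M$, using an approximately inner rigidity: two $*$-homomorphisms into a von Neumann algebra that are close are unitarily conjugate by a unitary close to $1$. We would conjugate successively so that the $\psi_n$ become compatible and converge in norm.

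\emph{Tensor products \eqref{I.Tensor}.} For tensor products with $C(X)$, restrict $\varphi$ to $C(X)\otimes 1$ and approximate it by a $*$-homomorphism using Theorem~\ref{T.Abelian+}. This gives a spectral measure in $M$. Decompose $M$ along it, i.e.\ disintegrate over the abelian subalgebra, and apply stability of $A$ fiberwise, i.e.\ over a partition of unity. This is reduced to finite partitions by approximating $C(X)$ by finite-dimensional pieces, combined with the inductive-limit case.

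For the ``moreover'' clause, note that in every step the witnessing function is built only from $f_{\sfC}$ and universal constants. This gives the statement for classes.
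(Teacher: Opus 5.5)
Your arguments for items (1)–(4) are essentially the paper's. For ideals you use the ultraweak extension through an approximate unit, with iterated limits and separate weak$^*$-continuity of multiplication. For quotients, a $*$-homomorphism of norm $<1$ on $J$ must vanish. For extensions you split along $p=\sup_\lambda\psi_J(e_\lambda)$ and apply stability of $A/J$ in $(1-p)M(1-p)$. Pullbacks are extensions of $A$ by $\ker\beta$.

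\textbf{Item (5).} You do not actually give a proof. The pointwise weak$^*$ ultralimit of the $\psi_n$ is linear, completely positive and approximately multiplicative, but in general not a $*$-homomorphism, and you leave this unresolved. Your ``cleaner route'' does not close. Suppose $\delta$-close $*$-homomorphisms $A_n\to M$ are conjugate by a unitary $u$ with $\|u-1\|\le C\delta$; this is established for amenable domains via an approximate diagonal, but is not available for an arbitrary member of an Ulam stable class. Even so, successive conjugation accumulates error. Let $\theta_n$ be the already corrected map on $A_n$, with $\|\theta_n-\varphi|_{A_n}\|\le\delta_n$. Then $\|\theta_n-\psi_{n+1}|_{A_n}\|\le\delta_n+f(\varepsilon)$, so $\theta_{n+1}=\Ad u_n\circ\psi_{n+1}$ only satisfies $\delta_{n+1}\le f(\varepsilon)+2C(\delta_n+f(\varepsilon))$. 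Necessarily $C\ge 1/2$, since $\|\Ad u\circ\psi-\psi\|\le 2\|u-1\|$ for every $*$-homomorphism $\psi$. The corrections therefore have fixed size of order $f(\varepsilon)$ and are not summable. You get neither norm convergence nor a bound uniform in $n$. A sequential scheme also does not cover the uncountable directed systems used in Proposition~\ref{P.Reflection}. The paper does not reprove this item; it cites Johnson's result in the form \cite[Theorem~3.1]{MV}.

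\textbf{Item (6).} The missing idea is an averaging step that makes $\varphi$ commute exactly with the image of $C(X)$. Let $\rho\colon C(X)\to M$ be the $*$-homomorphism approximating $\varphi|_{C(X)\otimes 1}$. The values of $\varphi$ only approximately commute with $\rho(C(X))$, and this gives no control over commutators with spectral projections of $\rho(C(X))$. Consequently:
\begin{itemize}
\item ``disintegrating $M$ over the abelian subalgebra'' does not decompose $\varphi$;
\item cut-downs of $\varphi$ by spectral projections need not be approximately multiplicative;
\item a homomorphism obtained on $A$ will not commute with $\rho$, so the two cannot be combined into a $*$-homomorphism of $C(X)\otimes A$.
\end{itemize}
The paper replaces $\varphi$ by $\varphi_1(x)=\int\rho(u)\varphi(x)\rho(u)^*\,d\mu(u)$, where $\mu$ is an invariant mean on the abelian discrete group $U(C(X))$. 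This $\varphi_1$ is close to $\varphi$ and takes values in the von Neumann algebra $\rho[C(X)]'\cap M$. Stability of $A$ inside that algebra gives $\sigma\colon A\to\rho[C(X)]'\cap M$, and $\rho\otimes\sigma$ is a $*$-homomorphism. Its closeness to $\varphi_1$ is checked fiberwise in the $C(X)$-algebra generated by the ranges. This uses that $\varphi_1$ is approximately $C(X)$-linear, so its image in the fiber over $x$ is governed by $\varphi_1(1\otimes b(x))$.

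Your fallback, ``approximating $C(X)$ by finite-dimensional pieces, combined with the inductive-limit case'', is not available. $C(X)$ is AF only when $X$ is totally disconnected; for instance $C([0,1])$ has no nontrivial projections. So $C(X)\otimes A$ is in general not an inductive limit of algebras $F\otimes A$ with $F$ finite-dimensional. A partition of unity does not produce subalgebras to which stability can be applied, nor a way to glue local homomorphisms. In any case this route would rest on item (5), which you have not established.
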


We are now going to prove these items one by one. The proof of Theorem~\ref{T.Closure}\eqref{I.Ideals} is a direct consequence of the following lemma.
In each of the proofs we fix a class $\sfC$ that is Ulam stable with respect to $\sfvN$. 

\begin{lemma}\label{L.extend.from.ideal}
Let $A$ be a \cstar-algebra, let $I$ be a closed two-sided ideal of $A$,
let $M$ be a von Neumann algebra, and let
$
\varphi\colon I\to M
$ 
be an $\varepsilon$-$*$-homomorphism.
Then there is a $3\varepsilon$-$*$-homomorphism
$
\widetilde\varphi\colon A\to M
$
such that
\[
\sup_{x\in I_1}\|\widetilde\varphi(x)-\varphi(x)\|
\leq 2\varepsilon.
\]
\end{lemma}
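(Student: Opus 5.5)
The plan is to define $\widetilde\varphi$ as a weak$^*$ limit of $\varphi$ along an approximate unit of $I$, using that the unit ball of $M$ is weak$^*$ compact. First apply Lemma~\ref{L.Corrected} to replace $\varphi$ by a corrected version. This costs a small amount in the constants, which I would track so that the final bounds come out as $3\varepsilon$ and $2\varepsilon$; if they do not come out exactly, I would absorb the loss into the witnessing function, which is all that Theorem~\ref{T.Closure}\eqref{I.Ideals} needs. With the correction in place we may assume $\varphi$ is $*$-preserving, homogeneous and satisfies $\|\varphi(y)\|\le\|y\|$ for $y\in I_1$.

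Next, fix an approximate unit $(e_\lambda)_{\lambda\in\Lambda}$ of $I$ that is \emph{quasicentral} for $A$, meaning $\|e_\lambda a-ae_\lambda\|\to0$ for all $a\in A$. Such an approximate unit exists by the standard Arveson/Akemann--Pedersen result. Fix an ultrafilter $\mathcal V$ on $\Lambda$ refining the tail filter. For $a\in A_1$ define
\[
\widetilde\varphi(a)=\text{weak}^*\text{-}\lim_{\lambda\to\mathcal V}\varphi(ae_\lambda),
\]
which exists by compactness of $M_1$. The conditions on sums, adjoints and scalars pass to weak$^*$ limits, because the defining inequalities say that certain differences lie in norm-closed balls, and these balls are weak$^*$ closed. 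For the adjoint condition one uses that $e_\lambda a^*\approx a^*e_\lambda$ in norm, together with the norm bound $\|\varphi(y)\|\le\|y\|$ applied to differences.

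For $x\in I_1$ we have $\|x-xe_\lambda\|\to0$. Combining additivity with the norm bound gives
\[
\|\varphi(x)-\varphi(xe_\lambda)\|\le \varepsilon+\|\varphi(x-xe_\lambda)\|+o(1)\le \varepsilon+o(1),
\]
and passing to the weak$^*$ limit yields $\|\widetilde\varphi(x)-\varphi(x)\|\le 2\varepsilon$, with room to spare.

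The main obstacle is approximate multiplicativity, because products are not jointly weak$^*$ continuous. The plan is to use iterated limits together with separate weak$^*$ continuity of multiplication in a von Neumann algebra. For fixed $\lambda$, we have $\varphi(ae_\lambda)\varphi(be_\mu)\approx_\varepsilon\varphi(ae_\lambda be_\mu)$. Taking $\lim_{\mu\to\mathcal V}$ with the left factor fixed gives
\[
\varphi(ae_\lambda)\widetilde\varphi(b)\approx_\varepsilon \lim_\mu\varphi(ae_\lambda be_\mu).
\]
By quasicentrality, $ae_\lambda be_\mu\approx abe_\lambda e_\mu$ in norm, uniformly in $\mu$ up to $o_\lambda(1)$. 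Moreover $abe_\lambda e_\mu\to abe_\lambda$ in norm as $\mu\to\mathcal V$, since $abe_\lambda\in I$. Again using additivity and the norm bound, these norm approximations cost at most one further $\varepsilon$ plus $o(1)$. Finally, take $\lim_{\lambda\to\mathcal V}$: the right-hand side tends to $\widetilde\varphi(ab)$, and on the left the right factor $\widetilde\varphi(b)$ is now fixed, so separate continuity gives $\widetilde\varphi(a)\widetilde\varphi(b)$. Collecting the errors should give the bound $3\varepsilon$. I expect the bookkeeping of the $\varepsilon$'s lost to additivity in these norm-approximation steps to be the delicate point.
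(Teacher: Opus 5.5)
Your construction is essentially the paper's: a weak$^*$ ultralimit of $\varphi$ along a quasicentral approximate unit of $I$, with approximate multiplicativity obtained from iterated limits and separate weak$^*$ continuity of multiplication. The one genuine shortfall is quantitative, and it comes from your first step. After passing through Lemma~\ref{L.Corrected}, the map you work with is only a $g(\varepsilon)$-$*$-homomorphism lying within $g(\varepsilon)$ of the original $\varphi$. Every bound you derive is then of the form $C\,g(\varepsilon)$. The comparison on $I_1$ must also be made against the original $\varphi$, not the corrected one, so the ``room to spare'' disappears. This is enough for Theorem~\ref{T.Closure}\eqref{I.Ideals}, but it does not prove the lemma with the stated constants $3\varepsilon$ and $2\varepsilon$. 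Your closing remark that the errors ``should give $3\varepsilon$'' is not supported by the argument as you set it up.

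The fix is simply to drop the correction. Approximate homogeneity alone gives $\|\varphi(y)\|\le\|y\|+\varepsilon$ for $y\in I_1$, since $\varphi(y)\approx_\varepsilon\|y\|\varphi(y/\|y\|)$. Combined with additivity, this yields $\|\varphi(x)-\varphi(y)\|\le 2\varepsilon+\|x-y\|$ for $x,y\in I_1$ with $\|x-y\|\le 1$, which is exactly the paper's continuity claim. With it, your argument gives the following bounds:
\begin{itemize}
\item[] the comparison on $I$ gives $2\varepsilon$;
\item[] the involution gives $\varepsilon+2\varepsilon$;
\item[] multiplicativity gives $\varepsilon+2\varepsilon+\|[e_\lambda,b]\|\to 3\varepsilon$, provided you apply the continuity estimate once to the combined approximation $ae_\lambda be_\mu\approx abe_\lambda$, whose norm error is at most $\|[e_\lambda,b]\|+\|e_\lambda e_\mu-e_\lambda\|$.
\end{itemize}

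Two smaller points. The paper uses $\varphi(e_\lambda a)$ instead of your $\varphi(ae_\lambda)$. Since $e_\lambda a\in I$, one then has $e_\lambda a e_\mu b\to e_\lambda ab$ directly, so quasicentrality is needed only for the involution. Your right-sided version also needs it in the multiplicative step, which is harmless because the commutator term vanishes in the outer limit. Also, $\widetilde\varphi$ must be defined on all of $A$, not only on $A_1$, because the additivity axiom evaluates it at $a+b$. The same weak$^*$ limit exists there, since $\varphi$ is bounded on bounded subsets of $I$; for example, $\|\varphi(y)\|\le 2+\varepsilon$ when $\|y\|\le 2$.
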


\begin{proof}
Let $(e_\lambda)_{\lambda\in\Lambda}$ be a quasi-central approximate unit
for $I$ in $A$. Thus
\[
e_\lambda\in I_+,\qquad \|e_\lambda\|\leq 1, \qquad
e_\lambda x\to x,\qquad xe_\lambda\to x
\qquad (x\in I),
\]
and
$
\|e_\lambda a-ae_\lambda\|\to 0
$ for all $a\in A$.

Choose a cofinal ultrafilter $\cU$ on the directed set $\Lambda$.
For $a\in A$, define
\[
\widetilde\varphi(a)
:=
\WOTlim_{\lambda\to\cU}\varphi(e_\lambda a),
\]
as the ultraweak ultrafilter limit. This is well-defined:
if $a\in A_1$, then $e_\lambda a\in I_1$, hence
$\|\varphi(e_\lambda a)\|\leq 1$, and bounded subsets of the
von Neumann algebra $M$ are ultraweakly relatively compact.

We first record a useful continuity claim. Let $x_\lambda,y_\lambda\in I_1$ be nets such that
$\|x_\lambda-y_\lambda\|\to 0$. Then
\[
\limsup_\lambda
\|\varphi(x_\lambda)-\varphi(y_\lambda)\|
\leq 2\varepsilon.
\]
Indeed, this is a direct consequence of Lemma \ref{L.approx.continuity} \eqref{1.approx}.

We next compare $\widetilde\varphi$ to $\varphi$ on $I$.
Let $x\in I_1$. Since $e_\lambda x\to x$ in norm, the claim gives
\[
\limsup_\lambda
\|\varphi(e_\lambda x)-\varphi(x)\|
\leq 2\varepsilon.
\]
Passing to the ultraweak limit and using ultraweak closedness of norm balls,
we obtain
$
\|\widetilde\varphi(x)-\varphi(x)\|
\leq 2\varepsilon.
$ and hence
\[
\sup_{x\in I_1}
\|\widetilde\varphi(x)-\varphi(x)\|
\leq 2\varepsilon.
\]

We verify that $\widetilde\varphi$ is a
$3\varepsilon$-$*$-homomorphism. Norm control, approximate additivity, approximate homogeneity, and approximate preservation of the involution are clear.
Approximate multiplicativity uses separate continuity of the multiplication on bounded sets: Let $a,b\in A_1$. We use only separate ultraweak continuity of multiplication:
\begin{align*}
\widetilde\varphi(a)\widetilde\varphi(b)
&=
\left(
\WOTlim_{\lambda\to\cU}\varphi(e_\lambda a)
\right)
\left(
\WOTlim_{\mu\to\cU}\varphi(e_\mu b)
\right) \\
&=
\WOTlim_{\lambda\to\cU}
\left(
\varphi(e_\lambda a)
\left(
\WOTlim_{\mu\to\cU}\varphi(e_\mu b)
\right)
\right) \\
&=
\WOTlim_{\lambda\to\cU}
\WOTlim_{\mu\to\cU}
\varphi(e_\lambda a)\varphi(e_\mu b).
\end{align*}
For fixed $\lambda$, approximate multiplicativity of $\varphi$ gives
$
\|\varphi(e_\lambda a)\varphi(e_\mu b)
-\varphi(e_\lambda a e_\mu b)\|
\leq \varepsilon.
$
Now $e_\lambda a\in I$, hence
$
(e_\lambda a)e_\mu\to e_\lambda a
$
in norm as $\mu\to\infty$. Therefore
$
e_\lambda a e_\mu b\to e_\lambda ab
$
in norm. By the claim,
\[
\limsup_\mu
\|\varphi(e_\lambda a e_\mu b)-\varphi(e_\lambda ab)\|
\leq 2\varepsilon.
\]
Hence
\[
\left\|
\WOTlim_{\mu\to\cU}
\varphi(e_\lambda a)\varphi(e_\mu b)
-\varphi(e_\lambda ab)
\right\|
\leq 3\varepsilon.
\]
Taking the outer ultraweak limit in $\lambda$ yields
$
\|\widetilde\varphi(a)\widetilde\varphi(b)
-\widetilde\varphi(ab)\|
\leq 3\varepsilon.
$
Combining the estimates above, $\widetilde\varphi$ is a
$3\varepsilon$-$*$-homomorphism, and
\[
\sup_{x\in I_1}
\|\widetilde\varphi(x)-\varphi(x)\|
\leq 2\varepsilon.
\]
This finishes the proof.
\end{proof}

\begin{proof}[Proof of Theorem~\ref{T.Closure}\eqref{I.Quotients}] Fix $\varepsilon>0$ such that $f_\sfC(\varepsilon)<1$. 
	If $\varphi\colon A/I\to D$ is an $\varepsilon$-$*$-homomorphism then so is its composition $\varphi\circ \pi_I$ with the quotient map from $A$ onto $A/I$. Suppose that  $\Phi\colon A\to D$ is a $*$-homomorphism that $f_{\sfC}$-approximates $\varphi\circ \pi_I$ on $A_1$. Thus, $\|\Phi(a)\| < |f_{\sfC}(\varepsilon)|<1$ for all contractions. Since every nonzero $*$-homomorphism has norm one, this implies that $I \subset \ker(\Phi)$ and $\Phi$ factors through $A/I$ and approximates $\varphi$, as required.   
\end{proof}

\begin{proof}[Proof of Theorem~\ref{T.Closure}\eqref{I.Extensions}] Fix $\varepsilon>0$ such that $f_\sfC(\varepsilon)<1$.
Suppose that $E$ is a \cstar-algebra with a two-sided norm closed ideal $I$ such that both $I$ and $C:=E/I$ belong to $\sfC$.
Let $\varphi\colon E\to M$ be an $\varepsilon$-$*$-homomorphism into a von Neumann algebra $M$. 

By Corollary~\ref{C.UlamStableCorrected} we may assume that $\varphi$ satisfies conditions \ref{Corrected.norm}--\ref{Corrected.log}. 
By the assumption, there is a $*$-homomorphism $\Phi_I\colon I\to M$ such that $\sup_{a\in I_1} \|\Phi_I(a)-\varphi(a)\|\leq f_{\sfC}(\varepsilon)$, where $f_{\sfC}$ witnesses Ulam stability  of $\sfC$ with respect to $\sfvN$.  

Fix an quasi-central approximate unit $(e_\lambda)_{\lambda\in \Lambda}$ of $I$ (\cite{Arv:Notes}, \cite[\S 1.9]{FarahCT}) and let $p$ be the supremum of  the directed set of positive contractions $\Phi_I(e_\lambda)$, for $\lambda\in \Lambda$. 
This is a projection that does not depend on the choice of $(e_\lambda)$. 
Fix $a\in E$. We will prove that 
$\|[\varphi(a),p]\|\leq 3\varepsilon +f_{\sfC}(\varepsilon)$. 
 
Since $(e_\lambda)$ is quasi-central, i.e., 
$\lim_\lambda \|[a,e_\lambda]\|=0$, we have $\limsup_\lambda \|[\varphi(a),\varphi(e_\lambda)]\|\leq 3\varepsilon$.
  Therefore, 
\begin{align*}
	\|[\varphi(a),p]\|&\leq \limsup_\lambda \|[\varphi(a),\Phi_I(e_\lambda)]\|\\
	&\leq \limsup_\lambda\|[\varphi(a),\varphi(e_\lambda)]\|+2\|\varphi(a)\|f_{\sfC}(\varepsilon)\\
	&\leq 3\varepsilon +2f_{\sfC}(\varepsilon), 
\end{align*}
as required. 
Therefore, each one of $\tilde\varphi_0,\tilde\varphi_1\colon E\to M$ defined by 
\begin{align*}
\tilde \varphi_0(a)&=p\varphi(a)p,\\
\tilde \varphi_1(a)&=(1-p)\varphi(a)(1-p)
\end{align*}
is an $\varepsilon'$-$*$-homomorphism for an $\varepsilon'$ that depends only on $\varepsilon$ and $f_{\sfC}(\varepsilon)$.
Moreover, by \ref{Corrected.leq} every positive contraction $a\in I$ satisfies 
\[
\varphi(e_\lambda^{1/2} a e_{\lambda}^{1/2})\leq \limsup_\lambda \varphi(e_\lambda)+3\varepsilon\leq \limsup_\lambda \Phi(e_\lambda)+2f_{\sfC}(\varepsilon)+3\varepsilon=p+2f_{\sfC}(\varepsilon)+3\varepsilon,  
\] 
and therefore $\|\tilde \varphi_1(a)\|\leq f_\sfC(\varepsilon)$  for all positive contractions $a\in I$. 
Thus, for $\varepsilon''$ depending only on $\varepsilon$ and $f_\sfC(\varepsilon)$,  $\tilde \varphi_1\circ s$ (where $s\colon C\to E$ is a norm-decreasing section) is an $\varepsilon''$-$*$-homomorphism from $C$ into $M$, and it can therefore be $f_{\sfC}(\varepsilon'')$-approximated by a $*$-homomorphism $\Phi_C\colon C\to M$.

We define $\Phi_0(a)= \WOTlim_{\lambda\to \cU} \Phi_I(e_\lambda a)$, where $\cU$ is a cofinal ultrafilter on $\Lambda$. Then $\Phi_0 \colon E \to M$ is a $*$-homomorphism by Lemma \ref{L.extend.from.ideal} for $\varepsilon=0$.
Fix $a\in E$. Then every $\lambda\in \Lambda$ satisfies 
\[
(\Phi_0(a)-\tilde\varphi_0(a))\Phi_I(e_\lambda)
\approx_ {f_\sfC(\varepsilon)} \Phi_I(ae_\lambda)-\tilde \varphi_0(a e_\lambda)\approx _{f_\sfC(\varepsilon)}0, 
\]
and therefore   $\|\Phi_0(a)-\tilde \varphi_0(a)\|\leq 2f_\sfC(\varepsilon)$.

Then, writing $\Phi_1=\Phi_C\circ \pi_I$ (where $\pi_I\colon E\to C$ is the quotient map) we have that 
\[
\Phi(a)=p \Phi_0(a) +(1-p)\Phi_1(a)
\]
is a $*$-homomorphism that approximates $\varphi$, as required. 
\end{proof}

\begin{proof}[Proof of Theorem~\ref{T.Closure}\eqref{I.Pullback}] Recall the definition of a pullback (\cite{pedersen1999pullback}). If $A,B,C$ are \cstar-algebras and $\alpha\colon A\to C$ and $\beta\colon B\to C$ are $*$-homomorphisms, then we define the pullback of this diagram by 
\[
A\oplus_C B =\{(a,b)\in A\oplus B: \alpha(a)=\beta(b)\}. 
\]
If $\alpha$ is surjective, then $A\oplus_C B$ is an extension of $B$ by $\ker(\alpha)$. Thus $A\oplus_C B$ is Ulam stable by Theorem~\ref{T.Closure}\eqref{I.Ideals}+\eqref{I.Extensions}. 
\end{proof}

\begin{proof}[Proof of Theorem~\ref{T.Closure}\eqref{thm:Johnson}] If $\sfC$ is a class of \cstar-algebras, we denote by $\sfAC$ the class of all inductive limits of inductive systems (not necessarily countable) of \cstar-algebras in $\sfC$. The fact that $\sfAC$ is Ulam stable if $\sfC$ is Ulam stable is a consequence of \cite{johnson1988approximately} stated explicitly and proved in \cite[Theorem~3.1]{MV}. 
\end{proof}

\begin{proof}[Proof of Theorem~\ref{T.Closure}\eqref{I.Tensor}] 
	Fix an abelian $A$, $C\in \sfC$, and an $\varepsilon$-$*$-homomorphism $\varphi\colon A\otimes  C\to M$. Without loss of generality, we may assume that $C$ is unital.
	In the following, by  $A^+$ we denote the unitization of a \cstar-algebra $A$.  
	By Lemma~\ref{L.extend.from.ideal} we may extend $\varphi$ to $A^+\otimes C$ and assume that $A$ is unital. 
	
	By Theorem~\ref{T.Abelian+} \eqref{U.Abelian.vN}, there is a $*$-homomorphism $\Phi_A\colon A\to M$ that $f_{\sfCOM}(\varepsilon)$-approximates the restriction of $\varphi$ to the unit ball of $A$. Since $M$ is a von Neumann algebra and the unitary group of $A$ is abelian, we can define $\varphi_1\colon A\otimes C\to M$ by  
	\[
	\varphi_1(x)=\int \Phi_A(u)\varphi(x)\Phi_A(u^{-1})\, d\mu(u)
	\]
	where $\mu$ is an invariant mean on the unitary group of $A$ (considered as a discrete group). We have, for every $x\in (A\otimes C)_1$ and $u\in U(A)$ (by Corollary~\ref{C.UlamStableCorrected}   \eqref{1.C.Ulam.Corrected}, 
	 we may assume that $\varphi$ sends the norm-unit ball into the norm-unit ball)
	\begin{align*}
		\|\varphi(x)-\Phi_A(u)\varphi(x)\Phi_A(u^{-1})\|
		&\approx_{2f_{\sfCOM}(\varepsilon)} 
		\|\varphi(x)-\varphi(u)\varphi(x)\varphi(u^{-1})\|\\
		&\approx_{2\varepsilon}
		\|\varphi(x)-\varphi(uxu^{-1})\|=0. 
	\end{align*}
	Since $\varphi_1$ is $2(f_{\sfCOM}(\varepsilon)+\varepsilon)$-approximated by an $\varepsilon$-$*$-homomorphism $\varphi$, it is a $\delta$-$*$-homomorphism, with 
	\[
	\delta=3(f_{\sfCOM}(\varepsilon)+\varepsilon)+\varepsilon. 
	\]
	We gained centrality at the cost of increasing the error, i.e.
	$$\Phi_A(a)\varphi_1(x) = \varphi_1(x)\Phi_A(a) \quad (x \in A \otimes C, \quad a \in A).$$
	
	Note that the range of $\varphi_1$ is included in the commutant
	$\Phi_A[A]'$, which is again a von Neumann algebra.
	By Ulam stability of $\sfC$ with respect to von Neumann algebras,
	the restriction of $\varphi_1$ to $1\otimes C$ can be
	$f_{\sfC}(\delta)$-approximated by a $*$-homomorphism
	$
		\Phi_C\colon C\to \Phi_A[A]'.
	$
	Since the ranges of $\Phi_A$ and $\Phi_C$ commute, we obtain a
	$*$-homomorphism
	$
		\Phi=\Phi_A\otimes \Phi_C\colon A\otimes C\to M.
	$
	We claim that $\Phi$ approximates $\varphi_1$ uniformly on the unit ball.

	Let $X$ be the spectrum of $A$, so that $A=C(X)$.
	Set
	\[
		N:=W^*\bigl(\varphi_1[A\otimes C],\,\Phi_A[A],\,\Phi_C[C]\bigr)
		\subseteq M.
	\]
	Since $\varphi_1[A\otimes C]\subseteq \Phi_A[A]'$ and
	$\Phi_C[C]\subseteq \Phi_A[A]'$, the algebra $\Phi_A[A]$ is contained
	in the centre of $N$. Hence $N$ is a $C(X)$-algebra. For $x\in X$, let
	$
		N_x:=N/C_0(X,x)N
	$
	and denote the quotient map by $q_x\colon N\to N_x$.
	For $z\in N$ we have
	\[
		\|z\|=\sup_{x\in X}\|q_x(z)\|.
	\]

	Note that$\Phi_A(a)\varphi_1(x) \approx_{2\delta} \varphi_1(ax)$ for all $x \in A \otimes C$ and $a \in A$.
	We first record a consequence of this approximate compatibility with the $C(X)$-module structure.
	If $d\in (A\otimes C)_1$ satisfies $d(x)=0$, then
	$
		\|q_x(\varphi_1(d))\|\leq 4\delta.
	$
	Indeed, given $\eta>0$, choose $a\in A_+$ with
	$0\leq a\leq 1$, $a(x)=0$, and $\|d-ad\|<\eta$.
	Using approximate additivity and approximate homogeneity of
	$\varphi_1$, we have
	$
		\|\varphi_1(d)-\varphi_1(ad)\|
		\leq \eta+2\delta.
	$
	On the other hand,
	$
		q_x(\varphi_1(ad))
		\approx_{2\delta}q_x(\Phi_A(a)\varphi_1(d))=0,
	$
	because $a(x)=0$. Letting $\eta\to 0$ gives the claim.

	Now fix $b\in (A\otimes C)_1$ and $x\in X$, and put
	$
		c_x:=1\otimes b(x)\in A\otimes C.
	$
	Then $e_x:=\frac12(b-c_x)$ belongs to $(A\otimes C)_1$ and satisfies
	$e_x(x)=0$. By the preceding claim,
	$
		\|q_x(\varphi_1(e_x))\|\leq 4\delta.
	$
	Using approximate additivity and approximate homogeneity once more,
	\begin{align*}
		\|q_x(\varphi_1(b)-\varphi_1(c_x))\|
		&\leq \|q_x(\varphi_1(b-c_x))\|+2\delta\\
		&=\|q_x(\varphi_1(2e_x))\|+2\delta\\
		&\leq 2\|q_x(\varphi_1(e_x))\|+3\delta\\
		&\leq 10\delta.
	\end{align*}
	Since $\Phi$ is a $C(X)$-homomorphism,
	$
		q_x(\Phi(b))=q_x(\Phi(c_x))
		=q_x(\Phi_C(b(x))).
	$
	Moreover, by the choice of $\Phi_C$,
	$
		\|\varphi_1(c_x)-\Phi_C(b(x))\|
		\leq f_{\sfC}(\delta).
	$
	Therefore,
	\begin{align*}
		\|q_x(\varphi_1(b)-\Phi(b))\|
		&\leq
		\|q_x(\varphi_1(b)-\varphi_1(c_x))\|
		+\|q_x(\varphi_1(c_x)-\Phi_C(b(x)))\|\\
		&\leq 10\delta+f_{\sfC}(\delta).
	\end{align*}
	Taking the supremum over $x\in X$, we obtain
	\[
		\|\varphi_1(b)-\Phi(b)\|
		\leq 10\delta+f_{\sfC}(\delta)
		\qquad (b\in (A\otimes C)_1).
	\]
	Since $\varphi_1$ uniformly approximates $\varphi$ on the unit ball
	by a quantity tending to $0$ with $\varepsilon$, the same is true of
	$\Phi$. This finishes the proof.
\end{proof}

\section{Ulam stability for Elliott classifiable \cstar-algebras}
\label{S.Extensions}

In this section we prove that the class that includes all Elliott classifiable \cstar-algebras and many nonclassifiable ones, such as all Villadsen algebras of the first type,   is Ulam stable.

\subsection{Homogeneous \cstar-algebras and their limits}

\begin{corollary}
	\label{T.AH}  The class 
	\[
	\sfC=\{A\otimes F: A\text{ abelian, $F$ finite-dimensional}\}
	\]
	is Ulam stable with respect to von Neumann algebras and so is 
	the class $\sfAC$ of all inductive limits of \cstar-algebras in $\sfC$.  
\end{corollary}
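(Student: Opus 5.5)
The plan is to assemble the result from the closure properties in Theorem~\ref{T.Closure} together with the base cases already established. First, the class $\sfFD$ is Ulam stable with respect to von Neumann algebras by Theorem~\ref{thm:MV_AF}\eqref{I.MV_fin} (in fact with respect to all \cstar-algebras, in particular von Neumann algebras). Second, Theorem~\ref{T.Closure}\eqref{I.Tensor} says that if $(\sfC_0,\sfvN)$ is Ulam stable then so is the class of tensor products $A\otimes C$ with $A$ abelian and $C\in\sfC_0$, with a witnessing function depending only on $f_{\sfC_0}$ and $f_{\sfCOM}$. Applying this with $\sfC_0=\sfFD$ yields Ulam stability of $\sfC=\{A\otimes F\}$. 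Note that the proof of \eqref{I.Tensor} first reduces to unital $A$ via Lemma~\ref{L.extend.from.ideal} and Theorem~\ref{T.Abelian+}\eqref{U.Abelian.vN}, so nonunital abelian $A$ are covered as well.

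The second assertion then follows from Theorem~\ref{T.Closure}\eqref{thm:Johnson}: inductive limits of (not necessarily countable) inductive systems of algebras in an Ulam stable class form an Ulam stable class, by Johnson's argument as stated in \cite[Theorem~3.1]{MV}. Since the witnessing function for $\sfC$ is uniform over the whole class, the Johnson/McKenney--Vignati limit argument applies directly to $\sfAC$.

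The only point requiring care is the uniformity of the witnessing function. The function $f_{\sfFD}$ from \cite{MV} is independent of the dimension of $F$. The function produced in the proof of \eqref{I.Tensor}, namely $10\delta+f_{\sfC_0}(\delta)$ plus the $O(\delta)$ correction with $\delta=3(f_{\sfCOM}(\varepsilon)+\varepsilon)+\varepsilon$, depends only on $\varepsilon$. It therefore tends to $0$, as the definition of Ulam stability requires.

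I expect this uniformity check to be the main (mild) obstacle; the remainder is bookkeeping. If one prefers to avoid relying on \eqref{I.Tensor} for nonunital $A$, one can alternatively take $\sfC$ with $A$ unital. A nonunital $A\otimes F$ is then an ideal in $A^+\otimes F$, and Theorem~\ref{T.Closure}\eqref{I.Ideals} covers it.

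Finally, Theorem~\ref{T.C(X)otimesF} follows from this corollary. $C(X)\otimes F$ with $F$ an $\sfAF$ algebra is an inductive limit of $C(X)\otimes F_n$ with $F_n$ finite-dimensional, so inductive limits of such algebras are again in $\sfAC$.
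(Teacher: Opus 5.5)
Your proposal is correct and follows the paper's proof: Ulam stability of $\sfFD$ (Theorem~\ref{thm:MV_AF}) combined with closure under tensoring with abelian algebras (Theorem~\ref{T.Closure}\eqref{I.Tensor}) gives $\sfC$, and closure under inductive limits (Theorem~\ref{T.Closure}\eqref{thm:Johnson}) gives $\sfAC$. One small caveat on your closing remark about Theorem~\ref{T.C(X)otimesF}: an inductive limit of algebras $C(X)\otimes F$ with $F\in\sfAF$ is not obviously itself in $\sfAC$, but this does not matter, since applying Theorem~\ref{T.Closure}\eqref{thm:Johnson} once more, now to the class $\sfAC$, already gives that result.
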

\begin{proof}
	For $\sfC$ this is a consequence of  the fact that the class of finite-dimensional \cstar-algebras is Ulam stable with respect to von Neumann algebras (Theorem~\ref{thm:MV_AF})  and the fact that tensoring with abelian \cstar-algebras preserves Ulam stability with respect to von Neumann algebras (Theorem~\ref{T.Closure}\eqref{I.Tensor}). 
	Ulam stability of $\sfAC$ follows from that of $\sfC$ and  the closure under inductive limits (Theorem~\ref{T.Closure}\eqref{thm:Johnson}). 
\end{proof}

The class of \cstar-algebras in Corollary~\ref{T.AH} includes all Villadsen algebras of the first type (see \cite[\S 3.1]{toms2009Elliott}). 

Now, we show that we do obtain uniform Ulam stability for all homogeneous \cstar-algebras when the base space has finite covering dimension.

Recall that for $n\in \bbN$ a \cstar-algebra is called \emph{$n$-homogeneous} if each one of its irreducible representations is of dimension $n$ (\cite[IV.1.4.1]{Black:Operator}). The base space of an $n$-homogeneous \cstar-algebra $A$, denoted $\hat A$,  is the space of its irreducible representations. In the following we deal only with compact metric spaces $X$ and $\dim(X)$ denotes the covering dimension of $X$. 

\begin{theorem}\label{T.Homogeneous}
	For every $m\in \bbN$, the class $\sfH_m$ of all \cstar-algebras that are $n$-homogeneous for some $n$ and whose base space is a compact metric space of covering dimension $\leq m$ is Ulam stable with respect to von Neumann algebras.
\end{theorem}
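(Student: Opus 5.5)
Every $n$-homogeneous \cstar-algebra $A$ with compact metric base space $X=\hat A$ is the algebra of continuous sections of a locally trivial bundle over $X$ with fibre $M_n$. The plan is to break $A$ into finitely many pieces that are each, up to ideals and quotients, of the form $C(Y)\otimes M_n$. We then assemble Ulam stability from Corollary~\ref{T.AH} and the closure properties of Theorem~\ref{T.Closure}. The point of the dimension bound $\dim X\le m$ is that the number of assembly steps, and hence the degradation of the witnessing function, is bounded in terms of $m$ alone, independently of $n$ and of $X$. Without such a bound, iterating the closure operations along an unbounded number of steps would give no uniform $f$.

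\textbf{Step 1: a bounded decomposition.} Since $X$ is compact metric with $\dim X\le m$ and the bundle is locally trivial, there is a finite open cover of $X$ by trivializing sets. Refining, we get a cover $\mathcal V=\mathcal V_0\cup\dots\cup\mathcal V_m$ in which each family $\mathcal V_j$ consists of pairwise disjoint open sets on each of which the bundle is trivial. This is the standard ``$(m+1)$-colour'' characterization of covering dimension. Let $U_j=\bigcup\mathcal V_j$. Over each $U_j$ the restricted bundle is trivial, because it is trivial on each disjoint piece.

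Now filter $A$ by ideals. Set $W_k=U_0\cup\dots\cup U_k$ and $J_k=C_0(W_k)A$, the ideal of sections vanishing off $W_k$. This gives
\[
0\subseteq J_0\subseteq J_1\subseteq\dots\subseteq J_m=A .
\]
Each subquotient $J_k/J_{k-1}$ is the algebra of sections vanishing at infinity of the bundle over $W_k\setminus W_{k-1}$, a locally closed subset of $U_k$. Over that set the bundle is trivial, so
\[
J_k/J_{k-1}\cong C_0(W_k\setminus W_{k-1})\otimes M_n .
\]

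\textbf{Step 2: assembly.} By Corollary~\ref{T.AH}, the class $\{C_0(Y)\otimes F\}$ with $F$ finite-dimensional is Ulam stable with respect to $\sfvN$, with a single witnessing function $f_0$. The non-unital case is covered by Theorem~\ref{T.Closure}\eqref{I.Ideals} applied to $C(\beta Y)\otimes F$, or to the one-point compactification. We then apply Theorem~\ref{T.Closure}\eqref{I.Extensions} exactly $m$ times, once for each inclusion $J_{k-1}\subseteq J_k$. Each application replaces the current witnessing function $f$ by a new one $F(f)$. By the proof of the extension step, this new function depends only on $f$ and $\varepsilon$, and not on the particular algebras. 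Hence $A$ admits the witnessing function $F^{\circ m}(f_0)$, which depends only on $m$. This works uniformly over all $n$, since $f_0$ is uniform in $F$.

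\textbf{Main obstacle.} The delicate points are in Step 1. The first is producing the $(m+1)$-colourable trivializing cover while keeping the trivializations consistent on each colour class. Disjointness makes this automatic, but the refinement must respect local triviality; this should follow from Lebesgue-number arguments together with the Ostrand/colouring form of covering dimension. The second is checking that the extension step of Theorem~\ref{T.Closure}\eqref{I.Extensions} really produces a witnessing function that depends only on the input function. The error bookkeeping there passes through $\varepsilon'$ and $\varepsilon''$, which depend only on $\varepsilon$ and $f_\sfC(\varepsilon)$, so I expect this to hold, but it is where uniformity must be verified explicitly.
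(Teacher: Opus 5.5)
Your proposal is correct. It produces the same kind of composition series as the paper: at most $m$ applications of Theorem~\ref{T.Closure}\eqref{I.Extensions}, with every subquotient of the form $C_0(Y)\otimes M_n(\bbC)$ and hence covered by Corollary~\ref{T.AH}. The difference is the dimension-theoretic input.

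The paper argues by induction on $m$:
\begin{enumerate}
\item Lemma~\ref{L.Dimension} uses the First Separation Theorem to find a closed $X_1\subseteq X$ with $\dim X_1\le m-1$ such that each component of $X\setminus X_1$ lies in a trivializing set.
\item Lemma~\ref{L.DimensionInduction} then splits off the ideal $I\cong C_0(X\setminus X_1,M_n(\bbC))$.
\item The quotient $A/I$ is again $n$-homogeneous over the lower-dimensional compact metric space $X_1$, so the inductive hypothesis for $\sfH_{m-1}$ applies to it.
\end{enumerate}

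You instead use the $(m+1)$-colouring (Ostrand) characterization of covering dimension to build the whole filtration $J_0\subseteq\dots\subseteq J_m=A$ at once. This means you never have to identify intermediate quotients as members of $\sfH_{m-1}$. It also makes triviality of the bundle over each $U_j$ immediate, since the pieces of a colour class are pairwise disjoint open sets and hence relatively clopen in $U_j$.

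The paper's route needs only the separation theorem and keeps the induction inside the classes $\sfH_m$. However, its step from ``each component of $X\setminus X_1$ lies in a trivializing set'' to ``the bundle is trivial over $X\setminus X_1$'' needs some extra care, because components of open sets need not be open. The step goes through if one uses the standard notion of a partition between closed sets: that yields a finite relatively clopen decomposition of $X\setminus X_1$ into pieces each contained in a trivializing set. Your colour-class argument avoids this issue. Your uniformity bookkeeping through the extension step is exactly what the paper's induction implicitly relies on.
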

Later on, we will prove that the conclusion of Theorem~\ref{T.Homogeneous} holds without the assumption of separability (Corollary~\ref{C.Nonseparable}). 

The theorem will be proved after two lemmas. 

\begin{lemma}\label{L.Dimension}
	Suppose that $X$ is a compact metric space of dimension $m$ and $\cU$ is an open cover of $X$. Then there is a closed subspace $X_1$ of $X$ of dimension $\leq m-1$ such that each component of $X\setminus X_1$ is covered by a single element of $\cU$. 
\end{lemma}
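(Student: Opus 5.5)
We will use the characterization of covering dimension through refinements of open covers with bounded order, together with the fact that partition-of-unity boundaries have lower dimension.

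\emph{Step 1: a finite refinement of order $\leq m+1$.} By compactness and $\dim(X)=m$, refine $\cU$ to a finite open cover $\mathcal V=\{V_1,\dots,V_k\}$ of order $\leq m+1$. Fix a partition of unity $(f_j)_{j\le k}$ subordinate to $\mathcal V$. This gives a continuous map $F=(f_1,\dots,f_k)\colon X\to |N|$ into the geometric realization of the nerve $N$ of $\mathcal V$. The nerve is a simplicial complex of dimension $\leq m$.

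\emph{Step 2: choose $X_1$.} Let $K$ be the barycentric subdivision of $N$. Let $S\subseteq |N|$ be the union of the closed simplices of $K$ that avoid the vertices of $N$. Equivalently, $S$ is the complement of the union of the open stars $\mathrm{st}(v_j)$ of the original vertices, taken in the subdivision. Then $S$ is a subcomplex of dimension $\leq m-1$: a simplex of $K$ corresponds to a chain $\sigma_0<\dots<\sigma_r$ of simplices of $N$, and if $\sigma_0$ is not a vertex then $r\leq m-1$. Each component of $|N|\setminus S$ is one open star $\mathrm{st}_K(v_j)$, and these stars are pairwise disjoint. Moreover $F^{-1}(\mathrm{st}_K(v_j))\subseteq \{f_j>0\}\subseteq V_j$.

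Setting $X_1=F^{-1}(S)$ would give the covering property: every component of $X\setminus X_1$ lies in one $F^{-1}(\mathrm{st}_K(v_j))$, hence in one $V_j$, hence in an element of $\cU$. The trouble is that $\dim F^{-1}(S)$ need not be $\leq m-1$, since preimages can have large dimension.

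\emph{Step 3: the dimension bound, the main obstacle.} Instead of the crude preimage, I would take $X_1$ to be a partition between the closed set $\{f_j\geq \tfrac12\}$ and its complement's relevant part. I would use the standard fact that in a compact metric space of dimension $m$, any two disjoint closed sets can be separated by a closed partition of dimension $\leq m-1$. This follows from the Menger--Urysohn inductive dimension coinciding with covering dimension for separable metric spaces.

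I would proceed by induction on $j$. Separate the disjoint closed sets $\{f_j\geq 1/(m+2)\}$ and $\{f_j=0\}$, both within the current residual space, by a closed partition $L_j$ of dimension $\leq m-1$. Let $X_1=\bigcup_j L_j$; this has dimension $\leq m-1$ by the countable (here finite) sum theorem for closed sets.

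Now let $C$ be a component of $X\setminus X_1$ and take $x\in C$. Since the order is $\leq m+1$, some $j$ has $f_j(x)\geq 1/(m+2)$. The set $C$ is connected and does not meet $L_j$, so it stays on one side of the partition. Hence $C\subseteq\{f_j>0\}\subseteq V_j$, which yields the conclusion. The delicate points are the exact separation-theorem statement, namely that partitions exist with $\dim\le m-1$ in metrizable compacta, and the sum theorem; I would cite these from Engelking's dimension theory.
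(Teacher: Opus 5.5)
Your argument is correct, and Step~3 is essentially the paper's proof. The paper shrinks the finite cover to closed sets $F_j=\overline{V_j}\subseteq U_j$ that still cover $X$; your sets $\{f_j\ge 1/(m+2)\}$ play exactly this role. It then takes partitions of dimension $\le m-1$ between $F_j$ and $X\setminus U_j$ by the First Separation Theorem, and concludes via the finite sum theorem and connectedness just as you do.

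Several parts of your write-up are superfluous: the nerve and barycentric subdivision of Steps 1--2, the order bound (for a $k$-element cover the threshold $1/k$ already works), and the ``induction on $j$ within the residual space''. The partitions $L_j$ should simply be taken in $X$ itself, which is all your final paragraph uses.
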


\begin{proof}
	By compactness we may assume $\cU$ is finite and enumerate it as $U_j$, for~$j<k$. Fix an open $V_j\subseteq U_j$, for $j<k$, such that $\bigcup_{j<k} V_j=X$ and $F_j=\overline {V_j}$ is included in $U_j$ for all~$j$. By the First Separation Theorem  from dimension theory of separable metric spaces (\cite[Theorem~1.5.12]{engelking1978dimension}) there is a closed $G_j\subseteq X$ such that no component of $X\setminus G_j$ intersects both  $F_j$ and $X\setminus U_j$ nontrivially and $\dim(G_j)\leq m-1$ (i.e., $G_j$ is a \emph{partition between} $F_j$ and $X\setminus U_j$). 
	Then $X_1=\bigcup_{j<k} G_j$ has dimension $\leq \max_{j<k} \dim(G_j)\leq m-1$. 
	
	To prove $X_1$ is as required, let $C$ be a component of $X\setminus X_1$. Since $\bigcup_{j<k} F_j=X$, $C$ intersects some $F_j$ nontrivially, and by construction this implies $C\subseteq U_j$.  	
\end{proof}

\begin{lemma} \label{L.DimensionInduction} Suppose that $A$ is $n$-homogeneous with compact metric base space $X$ of finite dimension $m$. Then $A$ has a two-sided, norm-closed  ideal $I$ such that $I\cong C_0(Y,M_n(\bbC))$ for some open $Y\subseteq X$ and $A/I$ is $n$-homogeneous with compact metric base space of dimension $\leq m-1$. 
\end{lemma}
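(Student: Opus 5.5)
The plan is to combine the local triviality of $n$-homogeneous algebras with Lemma~\ref{L.Dimension}. By the Fell / Tomiyama--Takesaki description (\cite[IV.1.7]{Black:Operator}), $A$ is isomorphic to the algebra of continuous sections of a locally trivial bundle over $X$ with fibre $M_n(\bbC)$ and structure group $\Ad(U(n))$. Consequently there is a finite open cover $\cU$ of $X$ such that over each $U\in\cU$ the bundle is trivial, i.e.\ the restriction of $A$ to $U$ is isomorphic to $C_0(U,M_n(\bbC))$. Apply Lemma~\ref{L.Dimension} to $\cU$ and obtain a closed $X_1\subseteq X$ with $\dim(X_1)\leq m-1$ such that every component of $Y:=X\setminus X_1$ lies in a single member of $\cU$. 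Set
\[
I=\{a\in A: a(x)=0\text{ for all }x\in X_1\},
\]
the ideal of sections vanishing on $X_1$. Then $I$ is the algebra of $C_0$-sections of the bundle restricted to the open set $Y$, and $A/I$ is the algebra of sections of the bundle restricted to $X_1$. The quotient is therefore $n$-homogeneous with compact metric base space $X_1$ of dimension $\leq m-1$, which is the second half of the claim.

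It remains to show that $I\cong C_0(Y,M_n(\bbC))$, i.e.\ that the bundle restricted to $Y$ is trivial. Each component $C$ of $Y$ is contained in some $U\in\cU$, so the bundle is trivial over each component. The main obstacle is gluing these componentwise trivializations into a global one. Components of an open subset of a compact metric space need not be open, so $Y$ need not be the topological disjoint union of its components, and in general choosing one trivialization per component gives no continuity across components.

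To handle this, I would first try to arrange the partition in Lemma~\ref{L.Dimension} more carefully, so that $Y$ splits into finitely many \emph{clopen} pieces $W_j\subseteq Y$ with $W_j\subseteq U_j$. The natural route is to go back into the proof of that lemma. There the partitions $G_j$ separate $F_j$ from $X\setminus U_j$, and a partition in the sense of \cite{engelking1978dimension} gives a decomposition $X\setminus G_j=O_j\sqcup O_j'$ into disjoint open sets with $F_j\subseteq O_j$ and $X\setminus U_j\subseteq O_j'$. Now define, using the disjointified sets
\[
W_j=Y\cap O_j\setminus \bigcup_{i<j}O_i .
\]
These should be relatively clopen in $Y$, since each $O_i$ and its complement $O_i'$ are open in $X\setminus G_i\supseteq Y$. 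They cover $Y$ because the $F_j$ cover $X$, and $W_j\subseteq O_j\subseteq U_j$. The bundle is then trivial over each $W_j$, and the finitely many clopen pieces combine to give $I\cong\bigoplus_j C_0(W_j,M_n)\cong C_0(Y,M_n(\bbC))$.

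If the original statement of Lemma~\ref{L.Dimension} cannot be strengthened this way, the fallback is a weaker conclusion that still suffices for Theorem~\ref{T.Homogeneous}. One then only records that $I$ is a $C_0(Y)$-algebra that is locally of the form $C_0(W,M_n)$ over clopen pieces, which is enough for Ulam stability via Theorem~\ref{T.Closure}\eqref{I.Tensor} and direct sums.
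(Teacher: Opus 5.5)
Your proof is correct. It uses the same skeleton as the paper: a finite cover $\cU$ by open sets over which the bundle is trivial, Lemma~\ref{L.Dimension} to produce $X_1$, and $I$ the ideal of sections vanishing on $X_1$. The difference is the last step. The paper passes directly from ``every component of $X\setminus X_1$ lies in a single member of $\cU$'' to $I(X\setminus X_1,A)\cong C_0(X\setminus X_1,M_n(\bbC))$. You instead go back into the proof of Lemma~\ref{L.Dimension} and use that a partition $G_j$ in the sense of \cite{engelking1978dimension} comes with a splitting $X\setminus G_j=O_j\sqcup O_j'$ into disjoint open sets with $F_j\subseteq O_j\subseteq U_j$. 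From this you get a finite partition of $Y$ into the relatively clopen (hence open in $X$) sets $W_j=(Y\cap O_j)\setminus\bigcup_{i<j}O_i\subseteq U_j$. That argument is sound; the covering claim follows from $\bigcup_j F_j=X$ and minimality of $j$. With it, $I\cong\bigoplus_j C_0(W_j,M_n(\bbC))\cong C_0(Y,M_n(\bbC))$ is immediate, and your fallback paragraph is unnecessary.

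Your caution is justified: the component property alone does not give triviality over $Y$. So the paper's inference at that point is a genuine gap, and your strengthening fills it. Here is a counterexample to that inference.
\begin{itemize}
\item Take a nontrivial $M_n$-bundle $\mathcal E$ over $S^2$ ($n\geq 2$), with poles $p,p'$ and equator $S^1$.
\item Let $Z=\{0\}\cup\{1/k:k\geq 1\}$ and pull $\mathcal E$ back to $X=S^2\times Z$.
\item Let $\cU=\{(S^2\setminus\{p\})\times Z,\,(S^2\setminus\{p'\})\times Z\}$.
\item Let $X_1$ consist of $(S^1\cup\{p,p'\})\times\{0\}$, together with $(p,1/k)$ for even $k$ and $(p',1/k)$ for odd $k$.
\end{itemize}
Then $X_1$ is closed with $\dim X_1=1$, and each component of $Y=X\setminus X_1$ lies in a member of $\cU$. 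Yet the bundle over $Y$ is nontrivial.

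To see this, suppose $s$ were a global frame over $Y$, and let $t_+$ be a frame over $S^2\setminus\{p'\}$. Write $s=t_+g$ on a latitude circle $\ell$ in the open upper hemisphere at level $0$.
\begin{itemize}
\item At odd levels, $s$ is a frame over all of $S^2\setminus\{p'\}$. Convergence from these levels forces $[g|_\ell]=0$ in $\pi_1(PU(n))$.
\item At even levels, $s$ is a frame over $S^2\setminus\{p\}$. Convergence from these levels forces $[g|_\ell]$ to equal the clutching class of $\mathcal E$, which is nonzero.
\end{itemize}
These two conclusions contradict each other, so no global frame exists.
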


\begin{proof}
	Recall that if $A$ is $n$-homogeneous with compact Hausdorff base space $X$, then~$A$ is isomorphic to a \cstar-subalgebra of $C(X,M_{n'}(\bbC))$ for a sufficiently large multiple $n'$ of $n$, see \cite[Proposition 2.9]{phillips2007recursive}. While the bundle over $X$ associated with $A$ may not be trivial, it is locally trivial. 
	By \cite[IV.1.7.23]{Black:Operator},  every 
	$x\in X$ has an open neighbourhood $U$ such that 
	\[
	I(U,A)=\{a\in A: \supp(a)\subseteq U\}
	\]
	is isomorphic to $C_0(U,M_n(\bbC))$. 
	
	By compactness, there is a finite open cover $\cU$ of $X$ such that $I(U,A)\cong C_0(U,M_n(\bbC))$ for every $U\in \cU$. 
	Lemma~\ref{L.Dimension} implies that there is $X_1\subseteq X$ of dimension $\leq m-1$ such that every component of $X\setminus X_1$ is covered by a single element of $\cU$. 
	Therefore, with $U=X\setminus X_1$, we have $I(U,A)\cong C_0(U,M_n(\bbC))$. The quotient $A/I(U,A)$ is $n$-homogeneous and its base space is $X_1$, as required. 
\end{proof}
	
\begin{proof}
	[Proof of Theorem~\ref{T.Homogeneous}]
We can now prove the theorem by induction on $m$. Suppose that $m=0$. 
If $A$ is $n$-homogeneous and its base space $X$ is 0-dimensional, then there is a finite  open cover $\cU$ of $X$ such that every $U\in \cU$ satisfies $I(U,A)\cong C_0(U,M_n(\bbC))$. Since $X$ is 0-dimensional, we may assume that the sets in $\cU$ are compact and disjoint, and therefore $A\cong \bigoplus_{U\in \cU} C(U,M_n(\bbC))\cong C(X,M_n(\bbC))$.
The class $\sfH_0$ of \cstar-algebras of this form is therefore Ulam stable by Corollary~\ref{T.AH}. 

Fix $m\geq 0$ and suppose that $\sfH_m$ 
 is Ulam stable with respect to von Neumann algebras. By Lemma~\ref{L.DimensionInduction}, every $A\in \sfH_{m+1}$ 
 has a two-sided, norm-closed  ideal isomorphic to 
 $C_0(Y,M_n(\bbC))$ for some locally compact metric $Y$ and $A/I$ is $m$-homogeneous with compact metric base space of dimension $\leq m$, hence it belongs to $\sfH_m$. By Corollary~\ref{T.AH} and the inductive hypothesis, the classes of such $I$ and $A/I$ is are Ulam stable with respect to von Neumann algebras, and the conclusion follows by Theorem~\ref{T.Closure}\eqref{I.Extensions}. 
\end{proof}

\begin{corollary}
The class ${\sf AH}_m$ is Ulam stable with respect to von Neumann algebras for every $m\in \bbN$.	
\end{corollary}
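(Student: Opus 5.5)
The plan is to interpret ${\sf AH}_m$ as the class of all inductive limits (not necessarily countable, and with arbitrary connecting maps) of \cstar-algebras of the form $\bigoplus_{j<k} p_j C(X_j,M_{n_j}(\bbC))p_j$, where each $X_j$ is a compact metric space of covering dimension at most $m$ and each $p_j$ is a projection in $C(X_j,M_{n_j}(\bbC))$. The proof then has two steps. First, the building blocks lie in a finite-direct-sum closure of the classes $\sfH_{m'}$ for $m'\leq m$. Second, we apply Theorem~\ref{T.Closure}\eqref{thm:Johnson}.

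\emph{Step 1: each corner $pC(X,M_n(\bbC))p$ is Ulam stable with a witnessing function depending only on $m$.} The rank of $p$ is a locally constant function on $X$. So $X$ splits into finitely many clopen pieces $X^{(r)}$ on which $p$ has constant rank $r$, and
\[
pC(X,M_n(\bbC))p\cong \bigoplus_r p|_{X^{(r)}}C(X^{(r)},M_n(\bbC))p|_{X^{(r)}}.
\]
Each summand is $r$-homogeneous, because every fiber is isomorphic to $M_r(\bbC)$. Its base space $X^{(r)}$ is compact metric with $\dim\leq m$, so it belongs to $\sfH_m$, and Theorem~\ref{T.Homogeneous} applies to it.

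\emph{Step 2: finite direct sums.} A finite direct sum $A_1\oplus\cdots\oplus A_k$ is an iterated extension: $A_1$ is an ideal with quotient $A_2\oplus\cdots\oplus A_k$. Iterating Theorem~\ref{T.Closure}\eqref{I.Extensions} therefore works, but the witnessing function would depend on $k$, which is unbounded. To avoid this I will argue directly, without using the extension machinery.

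Given an $\varepsilon$-$*$-homomorphism $\varphi$ on $\bigoplus_{j} A_j$, first normalize it using Corollary~\ref{C.UlamStableCorrected}. The central projections $1_{A_j}$ are mutually orthogonal, so $\varphi(1_{A_j})$ are approximately orthogonal approximate projections. The abelian subalgebra $\bbC^k$ spanned by these central projections is in $\sfCOM$, so Theorem~\ref{T.Abelian+} gives an exact $*$-homomorphism from $\bbC^k$ whose images are orthogonal projections $q_j$ with error controlled independently of $k$.

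Next, average $\varphi$ over the unitary group of $\bbC^k$, as in the proof of Theorem~\ref{T.Closure}\eqref{I.Tensor}. This makes $\varphi$ commute exactly with every $q_j$, at a uniform cost in the error. Each compression $q_j\varphi(\cdot)q_j$ restricted to $A_j$ is then a uniformly approximate $*$-homomorphism into the von Neumann algebra $q_jMq_j$. Approximate each one by a $*$-homomorphism $\Phi_j$ using Step 1, and set $\Phi=\bigoplus_j\Phi_j$.

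The error of $\Phi$ is a supremum over $j$ of uniform errors, not a sum, because the compressions live in orthogonal corners. The main obstacle I expect is precisely here: I must show that $\varphi(a)$ is uniformly close to $\sum_j q_j\varphi(a)q_j$. Once $\varphi$ commutes with the $q_j$, this reduces to showing that $q_j\varphi(a_i)\approx 0$ for $i\neq j$, which follows from approximate multiplicativity applied to $a_i=1_{A_i}a_i$.

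\emph{Step 3: inductive limits.} Theorem~\ref{T.Closure}\eqref{thm:Johnson} (Johnson, \cite[Theorem~3.1]{MV}) shows that the class of inductive limits of a Ulam stable class is Ulam stable. Applying it to the building blocks from Steps 1--2 gives Ulam stability of ${\sf AH}_m$.
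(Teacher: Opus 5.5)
Your proof is correct in substance, but it does not follow the paper's route. The paper's proof is one line: Theorem~\ref{T.Homogeneous} plus closure under inductive limits (Theorem~\ref{T.Closure}\eqref{thm:Johnson}). This tacitly takes the building blocks of ${\sf AH}_m$ to be single members of $\sfH_m$, so it does not cover finite direct sums of homogeneous algebras with different matrix sizes, or corners $pC(X,M_n(\bbC))p$ in which the rank of $p$ varies.

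You use the standard definition of AH algebras of dimension at most $m$. You correctly note that iterating Theorem~\ref{T.Closure}\eqref{I.Extensions} over the summands gives a witnessing function that depends on their number. You then remove this dependence with an argument modelled on the proof of Theorem~\ref{T.Closure}\eqref{I.Tensor}: stability of $\sfCOM$ on the central copy of $\bbC^k$, averaging over $\bbT^k$, compressing to $q_jMq_j$, and gluing. This is a uniform closure of any Ulam stable class of unital algebras under finite direct sums, which is more than the paper states.

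Within the paper's own framework, the same uniformity could also be obtained by running the induction of Theorem~\ref{T.Homogeneous} directly on finite direct sums. The point is that $\bigoplus_j C_0(Y_j,M_{n_j}(\bbC))$ is a quotient of $C_0(\bigsqcup_j Y_j)\otimes\bigoplus_j M_{n_j}(\bbC)$, which lies in the class of Corollary~\ref{T.AH}. That argument, however, only works for homogeneous blocks.

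Two points need tightening. First, Step~1 on its own does not give a function depending only on $m$, because a corner can split into as many as $n$ constant-rank pieces. Apply Step~2 once to the refined decomposition in which every summand lies in $\sfH_m$.

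Second, your reduction to $q_j\varphi(a_i)\approx 0$ for $i\neq j$ would reintroduce dependence on $k$ once you sum over $i$. It also ignores $q_\infty=1-\sum_j q_j$, which can be nonzero because the homomorphism on $\bbC^k$ need not be unital. Instead, observe that after averaging, $\varphi_1(a)-\Phi(a)$ is block diagonal with respect to $q_1,\dots,q_k,q_\infty$, so its norm is the maximum of the block norms. Then estimate each block in one step: $q_j\varphi_1(a)q_j\approx q_j\varphi_1(1_{A_j})\varphi_1(a)q_j\approx q_j\varphi_1(1_{A_j}a)q_j\approx\Phi_j(1_{A_j}a)$ and $q_\infty\varphi_1(a)\approx q_\infty\varphi_1(1)\varphi_1(a)\approx 0$. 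Each error here is independent of $k$.
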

\begin{proof} This is a direct consequence of Theorem~\ref{T.Homogeneous} and the closure under inductive limits (Theorem~\ref{T.Closure}\eqref{thm:Johnson}).
\end{proof}

\subsection{Recursive subhomogeneous algebras}

We now turn the study of recursively defined classes of \cstar-algebras and their inductive limits. This provides a class rich enough to include all Elliott classifiable \cstar-algebras.

\begin{corollary}\label{T.pullback}
	Suppose that $\sfB$ is Ulam stable with respect to von Neumann algebras. Then the class $\sfB'$ of all pullbacks of the form 
	\[
	B\oplus _{C(Y,F)} C(X,F)
	\]
	where $B$ and $F$ are in $\sfB$, $X$ is compact metrizable, $Y\subseteq X$ is closed, $\beta\colon B\to C(Y,F)$ is a $*$-homomorphism, and $\gamma\colon C(X,F)\to C(Y,F)$ is the restriction map, is Ulam stable with respect to von Neumann algebras. 
\end{corollary}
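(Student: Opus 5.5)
The plan is to present the pullback $P=B\oplus_{C(Y,F)} C(X,F)$ as an extension and then appeal to Theorem~\ref{T.Closure}. The restriction map $\gamma\colon C(X,F)\to C(Y,F)$ is surjective: by Tietze extension, applied coordinatewise in a fixed basis of the finite-dimensional algebra $F$, every continuous $F$-valued function on the closed set $Y$ extends to $X$. So we are in the setting of Theorem~\ref{T.Closure}\eqref{I.Pullback}, with the surjective map being $\gamma$ rather than $\beta$. Concretely, the projection $(b,f)\mapsto b$ from $P$ onto $B$ is surjective: given $b\in B$, choose $f\in C(X,F)$ with $\gamma(f)=\beta(b)$. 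Its kernel is
\[
\{(0,f): f|_Y=0\}\cong C_0(X\setminus Y,F).
\]
Hence $P$ is an extension
\[
0\to C_0(X\setminus Y,F)\to P\to B\to 0.
\]

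Next I will check that both ends of this extension lie in Ulam stable classes with a uniform witnessing function. The quotient $B$ lies in $\sfB$ by hypothesis. The ideal $C_0(X\setminus Y,F)$ is a closed two-sided ideal of $C(X)\otimes F$. The class of algebras of the form $C(X)\otimes F$ with $X$ compact and $F$ finite-dimensional is Ulam stable by Corollary~\ref{T.AH}. Here I use that each $F\in\sfB$ appearing in the pullback is finite-dimensional, as in recursive subhomogeneous decompositions. If $F$ is merely assumed to lie in $\sfB$, I would instead invoke Theorem~\ref{T.Closure}\eqref{I.Tensor}, which gives that the class $\{C(X)\otimes F : F\in\sfB\}$ is Ulam stable with a uniform witnessing function. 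By Theorem~\ref{T.Closure}\eqref{I.Ideals}, the class of ideals of these algebras is then Ulam stable as well.

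Finally, Theorem~\ref{T.Closure}\eqref{I.Extensions} applies to the union of the two classes, namely $\sfB$ together with the ideals just described. The union of two Ulam stable classes is Ulam stable, using the maximum of the two witnessing functions. So extensions of members of one class by members of the other are Ulam stable with a witnessing function depending only on those of $\sfB$ and $\sfAC$. This yields a single $f_{\sfB'}$ that works for all of $\sfB'$.

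The main subtle point is uniformity. The closure theorem is stated for a fixed class $\sfC$ with its witnessing function $f_\sfC$, so I must make sure that both the ideal and the quotient are drawn from one fixed Ulam stable class independent of $X$, $Y$, $\beta$ and $F$. Taking that class to be $\sfB\cup\{\text{ideals of } C(X)\otimes F\}$ handles this. The other point needing care is the direction of surjectivity: the kernel must be taken on the $C(X,F)$ side, so that the quotient is $B$, and this is exactly where surjectivity of $\gamma$ is used.
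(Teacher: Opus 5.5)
Your proposal is correct and is essentially the paper's argument. The paper cites Corollary~\ref{T.AH} together with Theorem~\ref{T.Closure}\eqref{I.Pullback}, and the proof of the latter is exactly your presentation of the pullback as an extension of $B$ by $\ker\gamma\cong C_0(X\setminus Y,F)$, followed by Theorem~\ref{T.Closure}\eqref{I.Ideals} and \eqref{I.Extensions}. Your extra care about uniformity, and about $F\in\sfB$ not necessarily being finite-dimensional (where Theorem~\ref{T.Closure}\eqref{I.Tensor} replaces Corollary~\ref{T.AH}), is appropriate. In that general case, however, justify surjectivity of $\gamma$ not by coordinatewise Tietze but by noting that $C(X)\otimes F\to C(Y)\otimes F$ is the tensor product of the surjective restriction map with $\mathrm{id}_F$.
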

\begin{proof}
	By Corollary \ref{T.AH} and Theorem~\ref{T.Closure}\eqref{I.Pullback}, it suffices to notice that the map $\gamma$ is surjective.
\end{proof}

We now iterate Theorem~\ref{T.pullback}. The class of recursive subhomogeneous \cstar-algebras was introduced in \cite{phillips2007recursive} and the class of NCCW complexes was introduced in \cite[\S 11]{pedersen1999pullback}. Let us say that the \emph{length} of a NCCW complex $A_n$ as defined in \cite[Definition~11.2]{pedersen1999pullback} is the number of iterations of the pullback construction used to construct it. 

\begin{corollary} \label{C.Recursive} For every $n\geq 1$, 
	the class $\mathcal R_n$ of recursive subhomogeneous \cstar-algebras of length $\leq n$ and the class $\sfAR_n$ of inductive limits of \cstar-algebras in $\cR_n$  are Ulam stable with respect to von Neumann algebras.  
\end{corollary}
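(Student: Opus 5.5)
The plan is induction on the length $n$, driven by Corollary~\ref{T.pullback}, followed by the closure of Ulam stable classes under inductive limits.

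Recall the structure of these algebras (\cite{phillips2007recursive}). A recursive subhomogeneous algebra of length $1$ has the form $C(X_0,M_{k_0})$. A recursive subhomogeneous algebra $R$ of length $\leq n+1$ is a pullback
\[
R=R'\oplus_{C(Y_{n},M_{k_{n}})} C(X_{n},M_{k_{n}}),
\]
where $R'$ has length $\leq n$, $X_n$ is compact metrizable, $Y_n\subseteq X_n$ is closed, the map $R'\to C(Y_n,M_{k_n})$ is a unital $*$-homomorphism, and the other map is the restriction.

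\emph{Base case.} The class $\cR_1$ consists of algebras $C(X,M_k)$. These lie in the class $\sfC$ of Corollary~\ref{T.AH}, so $\cR_1$ is Ulam stable with respect to von Neumann algebras.

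\emph{Inductive step.} Assume $\cR_n$ is Ulam stable with witnessing function $f_{\cR_n}$. Let $\sfB=\cR_n\cup\sfFD$. This class is Ulam stable, since finite unions of Ulam stable classes are Ulam stable: take the maximum of the two witnessing functions, using Theorem~\ref{thm:MV_AF} for $\sfFD$. Every algebra of length $\leq n+1$ is a pullback of the form in Corollary~\ref{T.pullback}, with $B\in\cR_n$, $F=M_{k_n}\in\sfB$, and $\gamma$ the surjective restriction map. Corollary~\ref{T.pullback} then gives that $\cR_{n+1}$ is Ulam stable. Its witnessing function is obtained from $f_{\cR_n}$ and $f_{\sfC}$ through the explicit recipe in the proofs of Theorem~\ref{T.Closure}\eqref{I.Ideals}, \eqref{I.Extensions}, \eqref{I.Pullback}. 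Because that recipe is uniform in the algebras, the resulting function depends only on $n$.

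The pullback step uses two facts. The kernel of $R\to R'$ is $C_0(X_n\setminus Y_n,M_{k_n})$, which is an ideal of $C(X_n,M_{k_n})$ and so is covered by Theorem~\ref{T.Closure}\eqref{I.Ideals}. The quotient by that kernel is $R'$, which is covered by the inductive hypothesis. Both of these are already built into Theorem~\ref{T.Closure}\eqref{I.Pullback}.

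\emph{Inductive limits.} Once $\cR_n$ is Ulam stable, Theorem~\ref{T.Closure}\eqref{thm:Johnson} gives Ulam stability of $\sfAR_n$, with a witnessing function depending only on $f_{\cR_n}$.

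The main point to watch is uniformity. Each application of the extension and pullback closure worsens the witnessing function, and the proof of Theorem~\ref{T.Closure}\eqref{I.Extensions} also requires $\varepsilon$ small enough that $f(\varepsilon)<1$. After $n$ iterations the function $f_{\cR_n}$ still tends to $0$, but nothing uniform in $n$ is claimed. This is why the statement is made separately for each fixed $n$. The remaining check is that the pullback presentation in \cite{phillips2007recursive} matches the hypotheses of Corollary~\ref{T.pullback}; for non-unital connecting maps, Corollary~\ref{T.pullback} allows an arbitrary $*$-homomorphism $\beta$, so this causes no issue.
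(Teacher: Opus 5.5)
Your proposal is correct and follows the paper's proof. You argue by induction on the number of pullback steps via Corollary~\ref{T.pullback}: each stage is an extension of the previous one by $C_0(X\setminus Y,M_k)$, which is handled by Theorem~\ref{T.Closure}\eqref{I.Ideals} and \eqref{I.Extensions}. You then invoke Theorem~\ref{T.Closure}\eqref{thm:Johnson} for $\sfAR_n$.

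The only discrepancy is harmless indexing. The paper counts length as the number of pullback steps, so $\cR_1$ already contains the dimension drop algebras and the homogeneous algebras $C(X,M_k)$ sit at length $0$. Your induction still covers every finite number of steps.
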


\begin{proof}By \cite[Definition~1.2]{phillips2007recursive}, these \cstar-algebras are obtained from \cstar-algebras of the form $C_0(X,M_n(\bbC))$ for $n\in \bbN$ and locally compact Hausdorff $X$ by an $n$-fold application of the pullback construction. Therefore the conclusion follows by  Theorem~\ref{T.pullback} and induction. 
NCCW complexes of length $n$ are a special case of recursive subhomogeneous algebras of length $n$ (\cite[Example~1.5]{phillips2007recursive}). 
\end{proof}

Thus $\mathcal R_1$ already contains the generalized dimension drop algebras, while the higher classes are obtained by iterating the same pullback construction. 

Corollary~\ref{C.Recursive} applies to every ASH (approximately subhomogeneous) algebra that admits an inductive limit decomposition by recursive pullback stages of uniformly bounded length.  In particular, it applies to all finite-stage recursive subhomogeneous models built from homogeneous blocks by a bounded number of pullback steps.

By \cite[Theorem 6.2 (iii)]{tikuisis-white-winter}, every separable simple unital nuclear stably finite $\mathcal Z$-stable \cstar-algebra satisfying the UCT is an inductive limit of subhomogeneous \cstar-algebras of topological dimension at most 2.  More precisely, by \cite[1.9--1.11]{winter:decomposition-length} all building blocks in the inductive limit decomposition belong to the class $\mathcal R_2$ described above. Therefore Corollary~\ref{C.Recursive} applies to each building block, and since the control function is uniform along the inductive limit, we obtain the following. 

\begin{theorem} \label{T.Classifiable}
	The class of all separable, simple, unital, nuclear, stably finite, $\mathcal Z$-stable \cstar-algebras satisfying the UCT, is Ulam stable with respect to von Neumann algebras. \qed 
\end{theorem}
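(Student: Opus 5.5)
Fix a \cstar-algebra $A$ in the class under consideration. The plan is to present $A$ as an inductive limit of building blocks that all lie in a single class already known to be Ulam stable with respect to $\sfvN$. Then one application of the closure under inductive limits, Theorem~\ref{T.Closure}\eqref{thm:Johnson}, finishes the argument. The witnessing function must not depend on $A$, so every building block for every $A$ must come from one fixed class.

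First, I invoke the structure theory quoted just before the statement. By \cite[Theorem~6.2(iii)]{tikuisis-white-winter}, every separable, simple, unital, nuclear, stably finite, $\mathcal Z$-stable \cstar-algebra satisfying the UCT is an inductive limit $A=\varinjlim (A_k,\theta_k)$ of subhomogeneous \cstar-algebras of topological dimension at most $2$. By \cite[1.9--1.11]{winter:decomposition-length}, the $A_k$ may moreover be taken to be recursive subhomogeneous algebras of length at most $2$, so each $A_k$ lies in $\cR_2$. I would check that these references indeed produce building blocks whose recursive length is bounded independently of $A$. If they only bound the topological dimension, I would instead use Theorem~\ref{T.Homogeneous} together with Corollary~\ref{T.pullback}, iterated a bounded number of times, to build a fixed Ulam stable class containing all subhomogeneous algebras of topological dimension $\le 2$ with a bounded number of pullback stages.

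Second, Corollary~\ref{C.Recursive} shows that $\cR_2$ is Ulam stable with respect to $\sfvN$, with a witnessing function $f_{\cR_2}$. The same corollary shows that $\sfAR_2$, the class of inductive limits of members of $\cR_2$, is Ulam stable, with a witnessing function supplied by Theorem~\ref{T.Closure}\eqref{thm:Johnson}; that function depends only on $f_{\cR_2}$. Since $A\in\sfAR_2$, the class in the statement is contained in $\sfAR_2$ and is therefore Ulam stable with the same function. Connecting maps that are not injective cause no trouble: quotients preserve Ulam stability by Theorem~\ref{T.Closure}\eqref{I.Quotients}, and we may replace each $A_k$ by its image in $A$.

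The main obstacle is the first step, namely making sure that the cited decomposition really lands in the \emph{fixed} class $\cR_2$ for every such $A$. A second point needs care: the pullbacks in Corollary~\ref{T.pullback} are taken along $C(X,F)\to C(Y,F)$ with the base $B$ already in the class, and Winter's building blocks must be shown to match this shape with a uniform number of stages. Everything after that is formal, since the inductive-limit closure from \cite{MV} and \cite{johnson1988approximately} gives a control function that is uniform along the limit.
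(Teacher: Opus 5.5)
Your outline is the paper's own proof step for step: Theorem~6.2(iii) of \cite{tikuisis-white-winter}, then \cite[1.9--1.11]{winter:decomposition-length} to place the building blocks in $\mathcal R_2$, then Corollary~\ref{C.Recursive} and the uniform inductive-limit closure of Theorem~\ref{T.Closure}\eqref{thm:Johnson}. The step you single out as the main obstacle is a genuine gap, and your fallback does not close it.

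The input from \cite{tikuisis-white-winter} is a bound on \emph{topological dimension}. Membership in $\mathcal R_2$ is a bound on \emph{length}, the number of pullback stages, and the first does not control the second. In Phillips' definition, which is the one used in the proof of Corollary~\ref{C.Recursive}, each stage $C(X_k,M_{n_k})$ adds irreducible representations of the single dimension $n_k$. So the length is at least the number of distinct dimensions of irreducible representations minus one. For instance, $M_1\oplus M_2\oplus M_3\oplus M_4$ has topological dimension $0$ and is not in $\mathcal R_2$. Even if each stage may use an arbitrary finite-dimensional $F$, as Corollary~\ref{T.pullback} permits, a dimension bound says nothing about how many times strata of different matrix sizes are glued onto one another. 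Whatever is extracted from \cite{winter:decomposition-length} must therefore carry more information about the blocks than their dimension. The paper asserts the membership on the strength of the same two references, so this is not a defect of your write-up alone. Your fallback is a class built from Theorem~\ref{T.Homogeneous} and Corollary~\ref{T.pullback} out of subhomogeneous algebras of topological dimension $\le 2$ ``with a bounded number of pullback stages''. It presupposes exactly the missing bound, so it restates the gap rather than repairing it.

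What the argument needs is one class that contains every building block and is reached from the class of Corollary~\ref{T.AH} by a fixed finite number of the operations in Theorem~\ref{T.Closure}. A bound on Phillips length is much stronger than this. Corollaries~\ref{T.AH} and~\ref{T.pullback} allow an arbitrary finite-dimensional $F$, so matrix sizes cost nothing. For example, a one-dimensional NCCW complex $\{(f,a)\in C([0,1],F_2)\oplus F_1 : f(0)=\alpha_0(a),\ f(1)=\alpha_1(a)\}$ is an extension of $F_1$ by $C_0((0,1))\otimes F_2$, which is an ideal of $C([0,1])\otimes F_2$. Hence all such complexes, whatever their block sizes, lie in a single class obtained by two closure steps. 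Similarly, an NCCW complex of dimension $\le 2$ in Pedersen's sense needs only two applications of Corollary~\ref{T.pullback} starting from a finite-dimensional algebra. To close the gap you must use the actual shape of the building blocks. For example, show (or cite) that the models supplied by the classification theorem are inductive limits of such bounded-complexity blocks, such as finite direct sums of one-dimensional NCCW complexes and algebras $C(X)\otimes F$ over low-dimensional finite CW complexes. Once that is in place, the rest of your argument goes through as you describe, including the handling of non-injective connecting maps via Theorem~\ref{T.Closure}\eqref{I.Quotients}.
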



It would thus be of interest to understand which nuclear \cstar-algebras beyond the above admit inductive limit decompositions by recursive pullback stages of uniformly bounded complexity.
We do not know whether the class $\bigcup_n \cR_n$ is Ulam stable with respect to von Neumann algebras.

\subsection{Reflection and nonseparable \cstar-algebras}\label{S.Reflection}

In this section we show how standard methods  imply that all of our positive results on Ulam stability hold without the assumption of separability. This should be contrasted with the Choi--Christensen result showing that Kadison--Kastler stability fails for separable, nuclear \cstar-algebras (\cite{choi1983completely}, see also \cite[\S 14.4]{FarahCT}). 

The following is \cite[Definition~7.3.1]{FarahCT}. 

\begin{definition}
Given a not necessarily separable \cstar-algebra $A$, by $\Sep(A)$ we denote the set of all separable \cstar-subalgebras of $A$. A subset $\cC$ of $\Sep(A)$ is said to be a \emph{club} if (i) it is cofinal in $\Sep(A)$, meaning that for every $B\in \Sep(A)$ some $C\in \cC$ satisfies $B\subseteq C$ and (ii) it is closed, meaning that if $B_n\in \cC$, for $n\in \bbN$, is an increasing sequence then $\overline {\bigcup_n B_n}\in \cC$.\footnote{`Club' stands for `closed, unbounded', although  `closed, cofinal' would be more accurate.}
A property $P$ of \cstar-algebras is said to \emph{reflect to separable substructures} if for every \cstar-algebra $C$ with property $P$, the set of all separable \cstar-subalgebras of $C$ with property $P$ includes a club. 
\end{definition}

\begin{proposition} \label{P.Reflection} Suppose that $P$ is a property of \cstar-algebras such that the class of all separable \cstar-algebras with property $P$ is Ulam stable with respect to $\sfvN$. Then the class of all \cstar-algebras with property $P$ is Uam stable with respect to $\sfvN$. \qed 
\end{proposition}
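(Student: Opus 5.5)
Here is the approach. Let $f$ witness Ulam stability for the separable \cstar-algebras with property $P$. Fix a \cstar-algebra $A$ with property $P$, a von Neumann algebra $M$, and an $\varepsilon$-$*$-homomorphism $\varphi\colon A\to M$. By Corollary~\ref{C.UlamStableCorrected} we may assume that $\varphi$ satisfies \ref{Corrected.norm}--\ref{Corrected.log}; this costs a change of $\varepsilon$ by a uniform amount. Since $P$ reflects to separable substructures (the statement is implicitly about such $P$), there is a club $\cC\subseteq\Sep(A)$ consisting of separable subalgebras with property $P$. For each $C\in\cC$ the restriction $\varphi|_C$ is an $\varepsilon$-$*$-homomorphism into $M$. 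Hence we can choose a $*$-homomorphism $\Phi_C\colon C\to M$ with $\sup_{c\in C_1}\|\varphi(c)-\Phi_C(c)\|\le f(\varepsilon)$. The task is to glue the maps $\Phi_C$ into a single $*$-homomorphism on $A$.

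For the gluing I would use a compactness argument in the point-ultraweak topology, not any coherence of the $\Phi_C$. Since $\cC$ is cofinal, it is a directed set under inclusion: any two members lie in a common separable subalgebra, which in turn lies in some member of $\cC$. Fix a cofinal ultrafilter $\cV$ on $\cC$. For $a\in A$ the set $\{C\in\cC: a\in C\}$ belongs to $\cV$, because the separable algebra $\cst(a)$ is contained in some $C_0\in\cC$ and every $C\supseteq C_0$ contains $a$. We can therefore define
\[
\Phi(a)=\WOTlim_{C\to\cV}\Phi_C(a),
\]
with $\Phi_C(a)$ interpreted as $0$ when $a\notin C$. The limit exists because $\|\Phi_C(a)\|\le\|a\|$ and bounded sets in $M$ are ultraweakly compact. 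Linearity and $*$-preservation pass to the limit immediately. Norm balls are ultraweakly closed, so $\|\Phi(a)-\varphi(a)\|\le f(\varepsilon)$ for $a\in A_1$.

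The main obstacle is multiplicativity, since multiplication is not jointly ultraweakly continuous. I would deal with it using the same device as in Lemma~\ref{L.extend.from.ideal}, except that separate continuity is not enough here: there is only one index, and $\Phi_C(a)\Phi_C(b)$ is a diagonal limit. I would try two routes.

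The first route is to replace the pointwise limit by a limit in an ultrapower-type target. Consider the map $A\to\prod_{\cV}M$ given by $a\mapsto(\Phi_C(a))_C$. This is an honest $*$-homomorphism into the \cstar-ultrapower, and it lies within $f(\varepsilon)$ of the diagonal image of $\varphi$. One then needs a conditional expectation from $\prod_\cV M$ back to $M$ that is multiplicative on the relevant part. This fails in general.

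The second and more robust route is to pass to a bidual and a support projection. Alternatively, and I expect this is the intended path, one can run the whole argument through the unitary-group machinery of \S\ref{S.Groups}. On unitaries the limit $\pi(u)=\WOTlim_{\cV}\Phi_C(u)$ is within $f(\varepsilon)$ of $\varphi(u)$, and it can be shown to be approximately multiplicative. Replacing ultraweak limits by an invariant-mean averaging, as in \cite[Theorem~2.1]{BMT}, would produce a genuine group homomorphism close to $\varphi$, and Theorem~\ref{T.Ulam2} would then deliver the $*$-homomorphism. If neither route works directly, I would fall back on showing that $\Phi_C$ can be chosen coherently along a closed cofinal chain, using the closure property (ii) of clubs to handle limit stages.
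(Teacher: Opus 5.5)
Your set-up is sound as far as it goes. You take the club $\cC$ (you are right that the statement tacitly assumes $P$ reflects to separable subalgebras, and the paper's proof uses this the same way), a cofinal ultrafilter on it, and the pointwise ultraweak limit $\Phi$ of the $\Phi_C$. That $\Phi$ is linear, $*$-preserving, contractive and within $f(\varepsilon)$ of $\varphi$ on $A_1$, hence only approximately multiplicative. The whole content of the proposition is the passage from such a $\Phi$ to a genuine $*$-homomorphism, and none of your routes achieves it.

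\begin{itemize}
\item The ultrapower route you discard yourself.
\item The bidual/support-projection route is not an argument.
\item The chain route fails twice. First, $\Sep(A)$ has a cofinal chain only when $A$ has density character at most $\aleph_1$. Second, even then Ulam stability of a larger $C'\supseteq C$ gives some $*$-homomorphism near $\varphi|_{C'}$, not one extending the already chosen $\Phi_C$. There is no uniqueness or relative version that would make the choices coherent.
\item The unitary-group route breaks at the averaging step. \cite[Theorem~2.1]{BMT} needs an invariant mean on $U(A)$ as a discrete group. That exists for abelian $A$, which is the only place the paper uses it (Theorem~\ref{T.Abelian+}). It does not exist for noncommutative $A$; for example, $U(M_2(\bbC))$ contains a free group. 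Moreover, this route never uses the hypothesis on separable $P$-algebras, since $\varphi|_{U(A)}$ is already an approximate homomorphism close to your $\pi$. If it worked, combined with Theorem~\ref{T.Ulam2} it would make every unital \cstar-algebra Ulam stable with respect to $\sfvN$, which the paper leaves open.
\end{itemize}

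What is missing is the local-to-global statement the paper already has: Theorem~\ref{T.Closure}\eqref{thm:Johnson}, closure of Ulam stability with respect to von Neumann algebras under inductive limits. It rests on Johnson's results on approximately multiplicative maps into dual Banach algebras \cite{johnson1988approximately}, as made explicit in \cite[Theorem~3.1]{MV}. Your ultraweak limit is the natural first step in that kind of argument, but the correction from approximately to exactly multiplicative comes from Johnson's theory, not from compactness.

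With that result available, the proof is two lines, and this is what the paper does. Since $\cC$ is cofinal in $\Sep(A)$, $A$ is the closure of the directed union of the members of $\cC$, i.e. an inductive limit of separable algebras with property $P$. This class is Ulam stable with the single witnessing function $f$, so Theorem~\ref{T.Closure}\eqref{thm:Johnson} applies directly. Neither the normalization of Corollary~\ref{C.UlamStableCorrected} nor any hand-made gluing of the $\Phi_C$ is needed.
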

\begin{proof}
	If $\cC\subseteq \Sep(A)$ is a club, then $A$ can be presented as an inductive limit of \cstar-algebras in $\cC$. Therefore, the conclusion follows by  Theorem~\ref{T.Closure} \eqref{thm:Johnson}. 
\end{proof}

\begin{corollary}\label{C.AxiomatizableReflects}
	If $P$ is a property of \cstar-algebras that is axiomatizable in continuous logic, then the class of all \cstar-algebras with property $P$ is Ulam stable if and only if the class all separable \cstar-algebras with property $P$ is Ulam stable. 
\end{corollary}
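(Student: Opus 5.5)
The statement has two directions. The forward direction is immediate: a subclass of an Ulam stable class is Ulam stable with the same witnessing function. So assume the class of separable \cstar-algebras with property $P$ is Ulam stable with respect to $\sfvN$, witnessed by $f$. The plan is to deduce the converse from Proposition~\ref{P.Reflection}, whose hypothesis is exactly this. The only thing to supply is that a property axiomatizable in continuous logic reflects to separable substructures. Once we have that, every $A$ with property $P$ is the directed union (norm closure) of a club of separable \cstar-subalgebras with property $P$. Then Theorem~\ref{T.Closure}\eqref{thm:Johnson}, closure under inductive limits of not necessarily countable directed systems, gives Ulam stability of $A$ with a witnessing function that depends only on $f$.

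For the reflection step I would use the downward L\"owenheim--Skolem theorem in its club form, \cite[\S 7.3]{FarahCT}. If $P$ is axiomatized by a theory $T$, i.e. a set of conditions $\sup_{\bar x}\varphi(\bar x)=0$ and the like, then the set of separable $C\in\Sep(A)$ that are elementary submodels of $A$ contains a club. Each such $C$ satisfies $T$ and hence has property $P$.

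Concretely, the club is built with Skolem functions, one for each formula and each rational tolerance. Cofinality comes from closing a separable $B$ under countably many Skolem functions in $\omega$ steps. Closedness holds because the union of an increasing sequence of elementary submodels is elementary, by the Tarski--Vaught test, which passes to norm closures by uniform continuity of formulas. If the axiomatization is only by $\forall\exists$-type conditions one could bypass full elementarity, but elementarity is the cleanest route.

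Finally, to apply Theorem~\ref{T.Closure}\eqref{thm:Johnson}, note that a club $\cC$ is directed under inclusion. Given $C_1,C_2\in\cC$, cofinality yields some $C\in\cC$ containing $C^*(C_1\cup C_2)$. Since $\cC$ is cofinal in $\Sep(A)$, its union is dense in $A$, so $A=\varinjlim_{C\in\cC}C$ with inclusion maps.

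The main obstacle, if any, is making sure that the inductive-limit closure of \cite[Theorem~3.1]{MV} really applies to uncountable directed systems with uniform control. Otherwise I would redo Johnson's argument directly. Approximate $\varphi|_C$ by $*$-homomorphisms $\Phi_C$, take a pointwise ultraweak limit along a cofinal ultrafilter on $\cC$ as in Lemma~\ref{L.extend.from.ideal}, and then correct the resulting approximate homomorphism once more using stability of a single separable member.
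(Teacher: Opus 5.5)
This is exactly the paper's proof: the L\"owenheim--Skolem theorem shows that $P$ reflects to a club of separable \cstar-subalgebras, and Proposition~\ref{P.Reflection} finishes the argument; that proposition is Theorem~\ref{T.Closure}\eqref{thm:Johnson} applied to the club viewed as a directed system, as you describe. Your fallback is not needed, and as sketched it would not close: a pointwise ultraweak limit of the $*$-homomorphisms $\Phi_C$ is linear and close to $\varphi$ but in general only approximately multiplicative, and Ulam stability of a single separable member cannot correct a map defined on a nonseparable $A$, so an additional ingredient of Johnson's type (improving approximately multiplicative linear maps into von Neumann algebras) would be required there.
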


\begin{proof}
	Only the converse implication requires a proof. By the L\"owenheim--Skolem theorem, $P$ reflects to separable \cstar-subalgebras and the conclusion follows by Proposition~\ref{P.Reflection}. 
\end{proof}

\begin{corollary}\label{C.Nonseparable}
	For every $m\in \bbN$, the class of all \cstar-algebras that are $n$-homogeneous for some $n$ and whose base space is a compact Hausdorff space of covering dimension $\leq m$ is Ulam stable with respect to von Neumann algebras. 
\end{corollary}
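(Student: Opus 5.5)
The plan is to reduce Corollary~\ref{C.Nonseparable} to Theorem~\ref{T.Homogeneous} via Proposition~\ref{P.Reflection}. Let $P_m$ be the property: ``$A$ is unital, $n$-homogeneous for some $n$, and the base space $\hat A$ has covering dimension $\le m$.'' For separable $A$ the base space is compact metrizable, so the separable algebras with $P_m$ form the class $\sfH_m$. By Theorem~\ref{T.Homogeneous} this class is Ulam stable with respect to $\sfvN$. Proposition~\ref{P.Reflection} then gives the conclusion, once we know that $P_m$ reflects to separable substructures. Concretely, for every $A$ with $P_m$ we must find a club $\cC\subseteq\Sep(A)$ consisting of algebras with $P_m$.

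For $n$-homogeneity I would use the fact that $n$-homogeneity (with fixed $n$) is axiomatizable in continuous logic. It is equivalent to: every irreducible representation has dimension $\le n$, which amounts to satisfying the standard polynomial identity of degree $2n$; and none has dimension $<n$, a condition expressible by a uniform sup/inf sentence. The latter can be avoided altogether by the Blackadar description of $A$ as the sections of a locally trivial $M_n$-bundle over $X=\hat A$. With the first approach, L\"owenheim--Skolem yields a club of $n$-homogeneous separable subalgebras, exactly as in Corollary~\ref{C.AxiomatizableReflects}. Alternatively one can argue directly: for separable $B\subseteq A$ enlarge $B$ to contain, for each point of a countable dense family of test elements, witnesses of local triviality, and close off in $\omega$ steps.

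The dimension condition is the main obstacle, because covering dimension of $\hat A$ is not obviously first-order. Here is the approach I would try first. The centre $Z(A)\cong C(X)$, and by the Marde\v{s}i\'c factorization theorem (or the countable closing-off argument behind it) $\dim X\le m$ holds iff every finite open cover admits a refinement of order $\le m+1$. In $C(X)$ this is witnessed by partitions of unity: for any finitely many positive contractions $a_1,\dots,a_k$ in $C(X)$ and $\delta>0$, there are positive $h_1,\dots,h_r$ with $\sum h_j=1$, every product of $m+2$ distinct $h_j$ equal to $0$, and each $a_i$ $\delta$-approximately constant on the support of each $h_j$. Given separable $B\subseteq A$, I would build $B=B_0\subseteq B_1\subseteq\cdots$ with countably many closure requirements. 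First, each $B_{k+1}$ contains such witnesses $h_j$ (taken in $Z(A)$) for all finite tuples from a countable dense subset of $Z(B_k)$ and all rational $\delta$. Second, $Z(B_{k+1})\supseteq Z(A)\cap B_k$, together with the homogeneity witnesses from the previous paragraph. Then $D=\overline{\bigcup_k B_k}$ is $n$-homogeneous, and its centre is a separable $C(Y)$ with $Y$ a metrizable quotient of $X$. The included partitions of unity show that every finite open cover of $Y$ has a refinement of order $\le m+1$, so $\dim Y\le m$. Closure of the resulting family under increasing unions is immediate from the construction, giving a club.

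Finally, one point needs checking: the base space of $D$ is $Y$, so that $D\in\sfH_m$. I expect this to follow from $D$ being $n$-homogeneous with centre $C(Y)$, by Dauns--Hofmann (irreducible representations of a homogeneous algebra are determined by the central character). With that, Proposition~\ref{P.Reflection} (i.e.\ Theorem~\ref{T.Closure}\eqref{thm:Johnson}) finishes the proof.
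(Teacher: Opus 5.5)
Your top-level reduction is the paper's: reflect to separable subalgebras, then apply Proposition~\ref{P.Reflection} and Theorem~\ref{T.Homogeneous}. You build the club differently, though.

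\textbf{How the paper does it, and what your route adds.} Lemma~\ref{L.HomogeneousReflects} uses axiomatizability only for $n$-\emph{sub}homogeneity. It then gets $n$-homogeneity by contradiction. If the non-$n$-homogeneous separable subalgebras were stationary, then for some $k<n$ those $B$ with an irreducible $\pi_B\colon B\to M_k(\bbC)$ would be stationary. The limit $\lim_{B\to\cU}\pi_B$ along a cofinal ultrafilter would then be a unital representation of $A$ on $\bbC^k$, which is impossible. The paper's written proof only shows that $n$-homogeneity reflects; it never checks that the reflected subalgebras have base space of dimension $\le m$. Your closing-off with order-$(m+1)$ partitions of unity in the centre supplies exactly this. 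On that point your argument is more complete than the paper's.

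\textbf{First correction: $n$-homogeneity is not axiomatizable.} ``No irreducible representation of dimension $<n$'' cannot be expressed by a sentence uniform over all $n$-homogeneous algebras. The number of central-polynomial witnesses needed grows with the base space. For example, an ultraproduct of the $2$-homogeneous algebras $\Gamma(\operatorname{End}(L\oplus\bbC))$ over $\mathbb{CP}^N$, $N\to\infty$, has a character: $p_1^k\neq 0$ obstructs pointwise-irreducible $r$-tuples of pairs once $N$ is large relative to $r$.

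This does not hurt your argument. By compactness of $X$ there is a finite $F\subseteq A$ such that $\pi_x(F)$ generates $M_n$ for every $x$. Every irreducible representation of a unital subalgebra $D\ni 1_A$ is a subrepresentation of some $\pi_x|_D$. Hence every such $D\supseteq F$ is $n$-homogeneous.

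\textbf{Second correction: the centre bookkeeping.} Your condition $Z(B_{k+1})\supseteq Z(A)\cap B_k$ holds automatically, so it does no work. Also, for $a_i\in Z(B_k)\setminus Z(A)$ the phrase `approximately constant on $\supp h_j$' has no meaning.

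The fix is to put $F$ into $B_0$. Then each $\pi_x$ restricts irreducibly to every $B_k$ and to $D$. This gives $Z(B_k)=Z(A)\cap B_k$ and $Z(D)=Z(A)\cap D$, which also settles your final Dauns--Hofmann check. Moreover, the centre-valued trace $E(a)(x)=\operatorname{tr}(\pi_x(a))$ maps $B_k$ into $Z(B_k)$. Hence $Z(D)=E(D)=\overline{\bigcup_k Z(B_k)}$, and this density is what your witness step actually uses.

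With these changes your proof is correct. Cofinality of your family already suffices for Proposition~\ref{P.Reflection}, so you do not need to verify closure under increasing unions.
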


\begin{proof}
	The conclusion will follow from Corollary~\ref{C.AxiomatizableReflects},  and 
	Theorem~\ref{T.Homogeneous}, and Lemma~\ref{L.HomogeneousReflects} below. 
\end{proof}

\begin{lemma}
	\label{L.HomogeneousReflects}
The property of being $n$-homogeneous reflects to separable \cstar-subalgebras. 
\end{lemma}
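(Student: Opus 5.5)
The plan is to show that $n$-homogeneity is expressible by conditions that pass to the separable subalgebras in a club. The cleanest route is to show that the class of $n$-homogeneous \cstar-algebras is axiomatizable in continuous logic, so that the Downward L\"owenheim--Skolem theorem gives a club of elementary submodels, each of which is then $n$-homogeneous. I will use the standard characterization (\cite[IV.1.4]{Black:Operator}): $A$ is $n$-homogeneous if and only if every irreducible representation has dimension exactly $n$. This happens if and only if two conditions hold. First, $A$ satisfies the polynomial identities of $M_n(\bbC)$; the standard polynomial $s_{2n}$ vanishes on $A$, which rules out irreducible representations of dimension $>n$. Second, $A$ has no irreducible representation of dimension $<n$.

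The first condition is a universal sentence, $\sup_{\bar x\in A_1}\|s_{2n}(\bar x)\|=0$, so it passes to all subalgebras. The second condition is the delicate one, because subalgebras of $n$-homogeneous algebras can have small representations. For example, $C(X)\otimes 1\subseteq C(X,M_n)$ is $1$-homogeneous. So I will not aim at arbitrary subalgebras. Instead I will work with the club of separable $C\subseteq A$ that are elementary submodels, or more concretely with those closed under certain witnesses. The property to encode is that for every $k<n$ no point of the primitive spectrum has dimension $k$.

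One way to encode it: for $k<n$, the ideal $J_k$ generated by the values of the polynomial identities of $M_k$ is all of $A$. Since $A$ is unital with compact base space, $J_k=A$ means that $1$ lies in the algebraic ideal generated by finitely many values $p_k(\bar x_i)$. Equivalently, with an $m$ depending on $A$, there is a finite certificate
\[
\Bigl\|1-\sum_{i\le m} y_i\, p_k(\bar x_i)\, z_i\Bigr\|<1/2
\]
with all variables bounded.

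Then I will check that any separable subalgebra $C$ containing such certificates for every $k<n$, and closed under the relevant operations, is $n$-homogeneous. Indeed, $s_{2n}$ vanishes on $C$, and no irreducible representation of $C$ of dimension $k<n$ can kill the certificate, since the certificate makes $p_k$ not identically zero in any quotient. The family of separable $C$ containing a fixed countable set of certificates is clearly cofinal and closed under increasing unions, hence it contains a club.

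In the non-unital case I will use a Pedersen-ideal or approximate-unit variant; alternatively, the same argument applies locally, since $J_k=A$ means every element of $A$ is approximated by elements of the ideal generated by the $p_k$ values, and one closes $C$ under choosing such approximants countably many times (a standard closing-off argument).

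The main obstacle is making sure the base-space conditions (compact, metric, of dimension $\le m$) also reflect, which Corollary~\ref{C.Nonseparable} needs. Compactness of $\hat C$ follows from unitality. For dimension I will use that the base space of the reflected $C$ is a continuous image, via restriction of representations, of $\hat A$, and that it is the spectrum of the centre. I will close $C$ under witnesses of covering-dimension $\le m$ for $Z(A)=C(X)$: for each finite partition of unity, take a refinement of order $\le m+1$ in the sense of the nuclear dimension of $C(X)$, noting that $\dim X=\dimnuc C(X)$. This makes $Z(C)\supseteq$ a club-many separable subalgebras with $\dim\le m$, by the standard reflection of nuclear dimension.
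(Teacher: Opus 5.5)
Your argument is correct, but it takes a different route from the paper's. The paper argues by contradiction. It starts from a club of $n$-subhomogeneous subalgebras, which exists because $n$-subhomogeneity is axiomatizable. If the non-homogeneous members formed a stationary set, then for a single $k<n$ stationarily many $B$ would carry an irreducible representation $\pi_B\colon B\to M_k(\bbC)$. Taking the limit of the $\pi_B$ along a cofinal ultrafilter, using norm-compactness of the unit ball of $M_k(\bbC)$, gives a unital $*$-homomorphism $A\to M_k(\bbC)$, contradicting $n$-homogeneity.

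You argue directly. Polynomial identities (Amitsur--Levitzki) convert ``no irreducible representation of dimension $k<n$'' into ``the ideal generated by the values of $p_k$ is all of $A$''. Unitality turns this into a finite certificate $F$. Every \cstar-subalgebra containing $F$ is then $n$-homogeneous: it contains $1_A$ automatically, since it contains an element within $1/2$ of $1_A$, and $s_{2n}$ rules out large representations. So your club is simply the cone above a finite set, which is more explicit and stronger than what the paper extracts. Your closing-off variant also handles non-unital $A$, where the paper's limit representation could vanish. In exchange, the paper's argument is soft and uses nothing beyond the subhomogeneity axiom and compactness.

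Two side remarks. First, drop the opening suggestion that $n$-homogeneity is axiomatizable. It is not needed, and it is false without a bound on the dimension of the base space, because your certificate length $m$ genuinely depends on $A$. For example, take the $2$-homogeneous section algebras of the $M_2(\bbC)$-bundles over $\mathbb{CP}^k$ induced from the Hopf circle bundle. An $S^1$-equivariant cohomology argument shows that no certificate of a fixed length works for all $k$, so a nonprincipal ultraproduct of these algebras has a character.

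Second, your last paragraph goes beyond the lemma; the paper does not treat the dimension bound here either. As written, it also leans on the wrong fact: a continuous image of $\hat A$ can have larger covering dimension. What actually makes your centre-based idea work is that $Z(C)=C\cap Z(A)$ for $n$-homogeneous $C\ni 1_A$. This holds because every irreducible representation of $A$ restricts to a representation of $C$ onto $M_n(\bbC)$. You would still need a closing-off step so that $C\mapsto C\cap Z(A)$ behaves well along increasing chains.
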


\begin{proof} Fix an $n$-homogeneous \cstar-algebra $A$ whose base space is a compact Hausdorff space of covering dimension $\leq m$. Since being $n$-subhomogeneous is axiomatizable by \cite[Theorem~2.5.1]{Muenster}, it reflects and  $\Sep(A)$ includes a club $\cC$ that consists of $n$-subhomogeneous \cstar-algebras. We need to prove that there is a club $\cC'\subseteq \cC$ that consists of $n$-homogeneous \cstar-algebras. Assume otherwise. Then  the set 
	\[
	\cB=\{B\in \cC: B\text{ is not $n$-homogeneous}\}
	\] 
	intersects every club in $\Sep(A)$ (i.e., this set is \emph{stationary}), in particular it is cofinal in $\Sep(A)$.  Thus every $B\in \cB$ has an irreducible representation on a $k$-dimensional Hilbert space for $k<n$. The union of finitely (even countably) many nonstationary sets is nonstationary, for some $k<n$ the set $\cC_k$ of all $B\in \cC$ such that $B$ has an irreducible representation $\pi_B\colon B\to M_k(\bbC)$ is stationary. Fix a cofinal ultrafilter $\cU$ on $\cC_k$. For $a\in A$ let 
	\[
	\pi(a)=\lim_{B\to \cU} \pi_B(a). 
	\]
Since for every $a\in A$, $\pi_B(a)$ is defined for $\cU$ many $B$ and belongs to the norm-compact ball of $M_k(\bbC)$, this function is well-defined.  Therefore $\pi$ is a unital *-homomorphism of $A$ into $M_k(\bbC)$, contradicting the assumption that $A$ is $n$-homogeneous.  
\end{proof}

\section{Examples and non-examples}

\subsection{Johnson's counterexamples to Ulam stability}
\label{S.Johnson}

We are grateful to Stuart White for bringing the highly relevant  \cite[Theorem~3 and Corollary~4]{johnson1982counterexample}, restated in the following proposition for reader's convenience,  to our attention.

\begin{proposition}\label{P.Counterexample} 
Let $A=C([0,1],\cK(H))$, where $\cK(H)$ is the algebra of compact operators on a separable Hilbert space $H$. 
	\begin{enumerate}
		\item \label{1.Counterexample} The pair $(\cK(H), \{A\})$  is not Ulam stable. 
		\item \label{3.Counterexample} For every $\varepsilon>0$ there is an $\varepsilon$-$*$-isomorphism $\varphi_\varepsilon\colon A\to A$ such that no $*$-ho\-mo\-mor\-phism $\Phi\colon A\to A$ satisfies $\sup_{\|a\|\leq 1} \|\Phi(a)-\varphi_\varepsilon(a)\|<1/80$. 
		\item 	\label{2.Counterexample} The pair $(\{c_0\}, \{C([0,1],\cK(H)\})$  is not Ulam stable. 
	\end{enumerate}
	\end{proposition}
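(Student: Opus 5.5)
The plan is to reduce all three items to a single topological obstruction, a rank invariant for $*$-homomorphisms into $C([0,1],\cK(H))$, and then to produce (or import from \cite{johnson1982counterexample}) approximate $*$-homomorphisms that violate it.

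\emph{The obstruction.} Let $\Phi\colon \cK(H)\to A=C([0,1],\cK(H))$ be a $*$-homomorphism and let $e\in\cK(H)$ be a rank-one projection. Then $t\mapsto \Phi(e)(t)$ is a norm-continuous path of finite-rank projections in $\cK(H)$. Its rank is therefore a locally constant, hence constant, function $r_\Phi$ on $[0,1]$. Since all rank-one projections in $\cK(H)$ are unitarily equivalent, $r_\Phi$ does not depend on $e$. The same argument applies to $c_0\subseteq\cK(H)$, embedded as diagonal operators: for each minimal projection $e_n$ of $c_0$, the rank of $\Phi(e_n)(t)$ is constant in $t$.

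Next I would record a perturbation fact. Suppose $\varphi$ is $\delta$-close on the unit ball to a $*$-homomorphism $\Phi$ with $\delta<1/2$. Then at each $t$, the element $\varphi(e)(t)$ lies within $\delta$ of a projection of rank $r_\Phi$. Consequently, $\varphi(e)(0)$ and $\varphi(e)(1)$ are close to projections of the \emph{same} rank. The task is thus to construct $\varepsilon$-$*$-homomorphisms $\varphi_\varepsilon\colon\cK(H)\to A$ such that $t\mapsto\varphi_\varepsilon(\cdot)(t)$ is a norm-continuous path of $\varepsilon$-$*$-homomorphisms with two properties: $\varphi_\varepsilon(\cdot)(0)$ is a genuine $*$-homomorphism of multiplicity $1$ (for example the identity), and $\varphi_\varepsilon(\cdot)(1)$ is a genuine $*$-homomorphism of a different multiplicity ($0$ or $2$). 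Then no $*$-homomorphism lies within a fixed constant (such as $1/80$) of $\varphi_\varepsilon$, independently of $\varepsilon$. This gives \eqref{1.Counterexample}.

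\emph{The construction.} This is the step I expect to be the main obstacle, and it is where I would follow Johnson \cite[Theorem~3]{johnson1982counterexample}, translating his approximately multiplicative linear maps into our $\varepsilon$-$*$-homomorphisms. Naive interpolations fail. Compressions $k\mapsto P_tkP_t$ or $k\mapsto D_tkD_t$ with slowly varying diagonal weights are not uniformly approximately multiplicative on the whole unit ball of $\cK(H)$. What must be exploited is that unitary orbits of homomorphisms of multiplicity $1$ and $2$ in $\mathrm{Hom}(\cK,\cK)$ come arbitrarily close in the point-norm topology only ``at infinity''.

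Concretely, I would first try the following. Identify $H\cong H\otimes\bbC^2$ and use a norm-continuous family of partial isometries $V_t$, each of which moves the basis vector $e_n$ gradually into $e_n\otimes\xi_t$, where $\xi_t$ rotates from $\xi_0=e_1$ towards a combination spreading over a block of length $N\gg1/\varepsilon$. The multiplicativity defect is then controlled by the $\ell^2$-variation over that block, which is $O(1/\sqrt N)$. I would need to check uniformity over the unit ball of $\cK(H)$ and linearity up to $\varepsilon$. If this fails, I would instead transcribe Johnson's explicit map verbatim and verify our four defining inequalities directly from his estimates.

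\emph{The remaining items.} For \eqref{2.Counterexample}, I would restrict $\varphi_\varepsilon$ to the diagonal copy of $c_0$ and apply the rank obstruction to the minimal projections $e_n$. Some care is needed here, because a homomorphism from $c_0$ may send some $e_n$ to $0$. So the obstruction should be applied to a single fixed $e_1$ whose image at $t=0$ has rank $1$ and at $t=1$ has a different rank. For \eqref{3.Counterexample}, I would tensor pointwise: define $\varphi_\varepsilon(f)(t)=\psi_t(f(t))$, where $\psi_t=\varphi_\varepsilon(\cdot)(t)$ comes from the construction. By compactness of $[0,1]$ and point-norm continuity this lands in $A$, and it is an $\varepsilon$-$*$-isomorphism, using the isomorphism $\cK\cong\cK\otimes M_2$ to make it surjective up to $\varepsilon$. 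A $*$-homomorphism within $1/80$ of it would, when evaluated on constant functions $1\otimes e$, violate the rank obstruction. Tracking the explicit constant $1/80$ through Johnson's estimates finishes the argument.
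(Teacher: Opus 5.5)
There is a genuine gap, and it lies in the obstruction itself, not only in the construction. The rank invariant you use does not separate approximate from genuine $*$-homomorphisms. Let $\varphi\colon\cK(H)\to A$ be an $\varepsilon$-$*$-homomorphism and $e$ a rank-one projection. Then $x=\varphi(e)$ satisfies $\|x^*-x\|\le\varepsilon$ and $\|x^2-x\|\le\varepsilon$. So $y=\frac12(x+x^*)$ is a self-adjoint element of $A$ with $\|y^2-y\|=O(\varepsilon)$, and its spectrum lies within $O(\varepsilon)$ of $\{0,1\}$. Functional calculus \emph{inside $A$} gives a projection $q=\chi_{(1/2,\infty)}(y)\in A$ with $\|q-x\|=O(\varepsilon)$. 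This $q$ is a norm-continuous path of projections in $\cK(H)$, hence has constant rank.

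Consequently, for small $\varepsilon$, every $\varepsilon$-$*$-homomorphism into $A$ has fibres at $0$ and $1$ of the same multiplicity. The family your construction step requires (fibre of multiplicity $1$ at $t=0$, of multiplicity $0$ or $2$ at $t=1$) therefore cannot exist. No choice of partial isometries $V_t$ or spreading over long blocks can change this. The same argument applies to the minimal projections of $c_0$, so the $c_0$ item inherits the problem. The $\varepsilon$-$*$-isomorphism item fails for a further reason: $\varepsilon$-surjectivity forces every fibre to be close to multiplicity one. The range of a multiplicity-two homomorphism, $\cK(H)\otimes 1_2$ inside $\cK(H\otimes\bbC^2)$, is at distance $1$ from contractions such as $e\otimes e_{12}$.

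A genuine counterexample must have all fibres of multiplicity one and exploit something finer, and this is what Johnson's example does. Let $u_\eta=\exp(i\pi a_\eta/8)$ be a pointwise (discontinuous) family of unitaries in $\prod_{x\in[0,1]}B(H)$, let $\alpha_\eta=\Ad u_\eta$, and let $j(c)=1\otimes c$. Then $d(A,\alpha_\eta[A])<\varepsilon$. Yet by Johnson's Theorem~3, no $*$-homomorphism from $(\alpha_\eta\circ j)[\cK(H)]$ into $A$ is within $\frac1{70}$ of the inclusion. By his Corollary~4, the same holds with $\frac1{1000}$ for the diagonal copy of $c_0$.

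The paper does not look for a new invariant. It defines $\varphi_\varepsilon$ by choosing, for each element of the unit ball of $\alpha_\eta[A]$, a nearby element of $A$, and then rescales to restore homogeneity. It then observes that a $*$-homomorphism within $\frac1{80}$ of $\varphi_\varepsilon$ would be within $\frac1{70}$ of the inclusion on $(\alpha_\eta\circ j)[\cK(H)]$. Your fallback of ``transcribing Johnson verbatim'' is the viable route, but it must replace the rank argument entirely. The non-approximability has to come from Johnson's Theorem~3 and Corollary~4 themselves, not from a multiplicity count.
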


\begin{proof} This is an immediate consequence of Johnson's results, but  since we found the notation of this paper somewhat difficult to penetrate, we first provide a brief guide to the results of \cite{johnson1982counterexample}. All references in this proof  are to this paper. 
		In Lemma~2 it was proved that for each $\eta>0$ there is  a self-adjoint contraction $a_\eta\in \prod_{x\in [0,1]} B(H)$ (where $[0,1]$ is taken with the discrete topology, as in the definition of $\frD$ below) with remarkable approximation properties. 
	Let $\frD=\prod_{x\in [0,1]} \cK(H)$, and consider $A=C([0,1],\cK(H))$ as a \cstar-subalgebra of $\frD$.

For $\varepsilon>0$ fix $\eta=\eta(\varepsilon)>0$ such that \begin{equation}\label{eq.eta}
\eta<\min\{1/2,(1/2)\varepsilon\exp(-3\pi/4)\}. 
\end{equation}
Let $u_\eta=\exp(i\pi a_\eta/8)$. Then $\alpha_\eta=\Ad u_\eta$ is an automorphism of $\frD$ and $d(A,\alpha_\eta [A])<\varepsilon$, but there is no $*$-homomorphism $\Phi\colon A\to \alpha_\eta[A]$ such that $\|\Phi(a)-a\| \leq (1/70)\|a\|$ for all $a\in A$. 
Even more is true. 
Let $j\colon \cK(H)\to A$ be the diagonal map, 
\[
j(c)=1_{C([0,1])}\otimes c. 
\]
Then Theorem 3 asserts that  there is no $*$-homomorphism from $C=(\alpha_\eta \circ j)[\cK(H)]$ into $A$ such that 
	$\|\Phi(c)-c\| \leq (1/70)\|c\|$ for all $c\in C$.  

	We  are now in a position to prove  \eqref{1.Counterexample} and \eqref{3.Counterexample}.   We will prove that 
	for every $\varepsilon>0$ there is a $\varepsilon$-$*$-isomorphism $\varphi_\varepsilon\colon A\to A$ such that $\Phi$ cannot be $\frac 1{80}$-approximated by a $*$-homomorphism. Fix $\varepsilon<\frac 1{560}$ and let $\eta$ be as in \eqref{eq.eta}.   With $j$, $u_\eta$, and $\alpha_\eta=\Ad u_\eta$  as before, for $b\in b\in (\alpha_\eta [A])_1$ let $\varphi_\varepsilon
	(b)$ be any $a\in A$ such that $\|(\alpha_\eta )(b)-a\|<\varepsilon\|b\|$. If $b\neq 0$ and $\|b\|\neq 1$ let $\varphi_\varepsilon(b)=\|b\|\varphi(b/\|b\|)$ and let $\varphi_\varepsilon)0)=0$. Then $\varphi_\varepsilon
	$ is a $3\varepsilon$-$*$-homomorphism. It will suffice to prove that, with $C=(\alpha_\eta \circ j)[\cK(H)]$,   no $*$-homomorphism $\Phi\colon C\to A$ satisfies $\|\Phi(a)-\varphi_\varepsilon(a)\|\leq 1/80$ for all $a\in C_1$ (the unit ball). Assume otherwise. Then  every nonzero $a\in C$ satisfies 
	\begin{align*}
		\|\Phi(a)-\alpha_\eta(a)\|&=\|a\|\,\|\Phi(a/\|a\|)-\alpha_\eta(a/\|a\|)\|\\
		&<\|a\|\, (\|\Phi(a/\|a\|)-\varphi_\varepsilon(a/\|a\|)\|+\delta)\\
		&< \|a\|(1/80+\delta). 
	\end{align*}  
	Since $\delta<\frac 1{70}-\frac 1{80}$, this is absurd, concluding the proof of \eqref{1.Counterexample}. 
	
	\eqref{2.Counterexample}
	    In Corollary~4 it  was proved that, with the notation as in \eqref{1.Counterexample}   and $c_0$ identified with the 
	   diagonal operators in $\cK(H)$,  no $*$-homomorphism 
	   $\Phi\colon (\alpha_\eta \circ j)[c_0]\to A$ satisfies $\|\Phi(a)-(\alpha_\eta \circ j)(a)\|\leq \frac 1{1000}\|a\|$ for all $a\in (\alpha_\eta \circ j)[c_0]$.

The fact that 	there is $\delta>0$ such that for every $\varepsilon>0$ the restriction of $\varphi_\varepsilon$ as defined in the proof of \eqref{1.Counterexample}  cannot be $\delta$-approximated by a $*$-homomorphism follows from this by computation analogous to that in the proof of \eqref{1.Counterexample}.  
\end{proof}

We can now prove 
Theorem~\ref{T.Nontrivial}. 

\begin{theorem} \label{T.Nontrivial+} Let $B$ denote $C([0,1],\cK(H))^+$. Then $B_\infty$ has an automorphism that is not algebraically trivial. \qed 
\end{theorem}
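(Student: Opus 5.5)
The plan is to realize Johnson's counterexamples (Proposition~\ref{P.Counterexample}) coordinatewise and assemble them into an automorphism of $B_\infty$. The point is that an automorphism built from maps of the form $(x_n)\mapsto(\varphi_n(x_n))$ becomes an honest automorphism in the quotient, because the errors $\varepsilon_n\to 0$. If it were algebraically trivial, it would yield genuine $*$-homomorphisms close to Johnson's $\varphi_{\varepsilon_n}$, which Proposition~\ref{P.Counterexample}\eqref{3.Counterexample} forbids.

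\emph{Construction.} Write $A=C([0,1],\cK(H))$, so $B=A^+$. Fix $\varepsilon_n\to 0$ and let $\eta_n=\eta(\varepsilon_n)$ be as in \eqref{eq.eta}. Let $u_n=u_{\eta_n}$ and $\alpha_n=\Ad u_n$, which is an automorphism of $\frD^+$ fixing $1$. Let $\varphi_n\colon B\to B$ be the $3\varepsilon_n$-$*$-homomorphism from the proof of Proposition~\ref{P.Counterexample}, extended unitally by $\varphi_n(\lambda+a)=\lambda+\varphi_n(a)$. Thus $\varphi_n(b)$ is a nearest-point choice in $B$ to $\alpha_n(b)$.

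Symmetrically, using $\alpha_n^{-1}=\Ad u_n^*$ and $d(A,\alpha_n^{-1}[A])<\varepsilon_n$, choose $3\varepsilon_n$-$*$-homomorphisms $\psi_n\colon B\to B$ approximating $\alpha_n^{-1}$. Then
\[
\|\psi_n\varphi_n(b)-b\|=O(\varepsilon_n)\quad\text{and}\quad \|\varphi_n\psi_n(b)-b\|=O(\varepsilon_n)
\]
for all $b$ in the unit ball. Define $\Phi((b_n)+c_0(B))=(\varphi_n(b_n))+c_0(B)$, and $\Psi$ analogously. These maps are well defined on bounded sequences, since the $\varphi_n$ are uniformly approximately additive and homogeneous. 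Since $\varepsilon_n\to 0$, they are $*$-homomorphisms of $B_\infty$, and they are mutually inverse. So $\Phi$ is an automorphism.

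\emph{Non-triviality.} Suppose $\Phi$ were algebraically trivial, witnessed by a positive element $a$ of the ideal, a positive $b$ on the other side, and an isomorphism $\theta$ of hereditary subalgebras. This isomorphism extends to a $*$-homomorphism $\Theta$ on $\overline{(1-a)\cM(1-a)}$ lifting $\Phi$ there. Let $q_n$ denote the central projection onto the $n$-th coordinate.

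\begin{enumerate}
\item Since $a$ lies in the ideal, $\|a_n\|\to 0$. Hence for large $n$ the element $(1-a_n)$ is invertible and close to $1$. Compressing and renormalizing shows that $x\mapsto q_n\Theta(\iota_n(x))q_n$ is within $o(1)$ of a genuine $*$-homomorphism $\Theta_n\colon B\to B$. Here $\iota_n$ is the inclusion into the $n$-th coordinate of $\ell_\infty(B)$.
\item By a standard diagonal argument, $\sup_{\|x\|\le 1}\|\Theta_n(x)-\varphi_n(x)\|\to 0$. If this failed along infinitely many $n$, the witnesses $x_n$ would assemble into a single bounded sequence $(x_n)$ on which $\Theta$ and the lift of $\Phi$ differ modulo $c_0(B)$.
\item Restricting $\Theta_n$ to $(\alpha_{\eta_n}\circ j)[\cK(H)]$ then gives a $*$-homomorphism into $B=A^+$ that approximates $\varphi_n$ within $1/80$. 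Composing with the character $A^+\to\bbC$ gives a $*$-homomorphism on the simple algebra $\cK(H)$ that is nearly $0$, hence is $0$. So the image lies in $A$, contradicting Proposition~\ref{P.Counterexample}\eqref{1.Counterexample}/\eqref{3.Counterexample}.
\end{enumerate}

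\emph{Expected main obstacle.} The hard part is step 1 above: extracting exact $*$-homomorphisms $\Theta_n$ on individual coordinates from an abstract algebraically trivial isomorphism. A priori the isomorphism $\theta$ between hereditary subalgebras of $\bigoplus B$ need not respect the coordinate decomposition. To handle this, I would first show that $\theta$ approximately preserves the central projections $q_S$ for infinite $S\subseteq\bbN$, since $\Phi$ fixes their images in the centre. Then $\Theta(q_n)$ is close to $q_n$ for all but finitely many $n$. A functional-calculus perturbation then replaces $\Theta(q_n)$ by $q_n$ exactly, conjugating by a unitary close to $1$, so that $\Theta_n$ becomes an honest $*$-homomorphism.
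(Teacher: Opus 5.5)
Your proposal is correct and follows the paper's argument. Johnson's $\varepsilon_n$-isomorphisms from Proposition~\ref{P.Counterexample}\eqref{3.Counterexample}, extended to the unitization, are assembled coordinatewise into an automorphism of $B_\infty$. An algebraically trivial lift would then give, for large $n$, an honest $*$-homomorphism on the $n$-th copy of $B$ uniformly close to $\varphi_n$, which Johnson's theorem forbids.

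The paper's three-line proof tacitly uses the coordinatewise form of triviality (Definition~\ref{D.Trivial}) and skips your Step~1. That step needs no perturbation: $\Theta(q_n)$ is a projection in $\bigoplus_m B$ that is central in the target hereditary algebra, and since $B$ has trivial centre this forces $\Theta(q_n)=q_{\sigma(n)}$ for large $n$. The fact that $\Phi$ fixes the image of every $q_S$ in $B_\infty$ then gives $\sigma(n)=n$ for all but finitely many $n$.
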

\begin{proof}
	Since every $\varepsilon$-isomorphism between non-unital \cstar-algebras can be extended  to an $\varepsilon$-isomorphism between their unitizations, by Theorem~\ref{P.Counterexample} \eqref{3.Counterexample} for every $n\geq 1$ we can fix a  $1/n$-isomorphism $\varphi_n\colon B\to B$ that cannot be $1/80$ approximated by a $*$-homomorphism. Then $(b_n)\mapsto (\varphi_n(b_n))$ lifts an automorphism $\Psi$ of $B_\infty$.  If $\Psi$ had an algebraically trivial lifting $\psi$, then for large enough $n$ the restriction of $\psi$ to the $n$-th copy of $B$ in $\ell_\infty(B)$ would be an automorphism, contradicting the choice of $\varphi_n$. 
\end{proof}

\subsection{Ulam stability for $\BH$}\label{S.B(H)}

Let $H$ be an infinite-dimensional separable Hilbert space.
We write $\BH$ for the algebra of bounded operators, $ K(H)\subseteq \BH$ for the ideal of compact operators, and $\QH=\BH/ K(H)$ for the Calkin algebra.
Let $\pi\colon \BH\to \QH$ denote the quotient map.

\medskip

The aim of this note is to show the following theorem

\begin{theorem}
There exists a function $f(\varepsilon)\to 0$ as $\varepsilon\to 0$ with the following property: Let $H$ be a separable Hilbert space and $\varphi\colon B(H) \to B(H)$ is an $\varepsilon$-$*$-homomorphism, then there exists a $*$-homomorphism $\psi\colon B(H)\to B(H)$ such that
\[\sup_{\|a\|\le 1}\|\varphi(a)-\psi(a)\|\le f(\varepsilon).\]
\end{theorem}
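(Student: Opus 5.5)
We will reduce the problem to two pieces that are already Ulam stable, using the structure of $B(H)$ as the extension $0\to K(H)\to B(H)\to \QH\to 0$ together with the rigidity of representations of $B(H)$. By Corollary~\ref{C.UlamStableCorrected} we may assume $\varphi$ satisfies \ref{Corrected.norm}--\ref{Corrected.log}. The first step is to handle the ideal. The restriction of $\varphi$ to $K(H)$ is an $\varepsilon$-$*$-homomorphism from an $\sfAF$ algebra into the von Neumann algebra $B(H)$. Theorem~\ref{thm:MV_AF}\eqref{I.MV_AF} therefore gives a $*$-homomorphism $\Phi_K\colon K(H)\to B(H)$ with $\|\Phi_K-\varphi\|\le f_{\sfAF}(\varepsilon)$ on $K(H)_1$. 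Let $p=\sup_\lambda \Phi_K(e_\lambda)$ for an approximate unit $(e_\lambda)$ of finite-rank projections. As in the proof of Theorem~\ref{T.Closure}\eqref{I.Extensions}, $\|[\varphi(a),p]\|=O(\varepsilon+f_{\sfAF}(\varepsilon))$ for all $a\in B(H)_1$.

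We then split $\varphi\approx p\varphi p+(1-p)\varphi(1-p)$. The extension of $\Phi_K$ from Lemma~\ref{L.extend.from.ideal} with $\varepsilon=0$ is a normal $*$-homomorphism $\Phi_0\colon B(H)\to pB(H)p$; it is an amplification $a\mapsto a\otimes 1$ up to unitary conjugacy. The argument in the proof of Theorem~\ref{T.Closure}\eqref{I.Extensions} gives $\|\Phi_0(a)-p\varphi(a)p\|\le 2f_{\sfAF}(\varepsilon)$. The corner $\varphi_1=(1-p)\varphi(\cdot)(1-p)$ is an $\varepsilon'$-$*$-homomorphism that is $O(\varepsilon')$-small on $K(H)_1$. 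This is the point where we deviate from the general extension theorem, since $\QH$ is not known to be Ulam stable with respect to $\sfvN$. Instead, we aim to show directly that a $B(H)$-valued $\varepsilon'$-$*$-homomorphism on $B(H)$ that almost kills $K(H)$ is uniformly close to $0$ or, more generally, to a $*$-homomorphism vanishing on $K(H)$.

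For this step we would use the unitary group route of Theorem~\ref{T.Ulam2}, organized as follows.
\begin{enumerate}
\item Using $\varphi_1(e)\approx 0$ for finite-rank projections $e$, and the fact that every projection of infinite rank and corank is a sum of two conjugates of a fixed one, transport approximate multiplicativity to show that $\varphi_1(q)$ is close to a projection $r_q$ depending only on the class of $q$ modulo $K(H)$.
\item Exploit the fact that every element of $\QH$ is a product or commutator of a bounded number of elements, for instance that every unitary in the identity component of $U(\QH)$ is a product of boundedly many commutators of unitaries. Together with the fact that $U(B(H))$ is contractible and perfect with bounded commutator width, this should give a genuine group homomorphism $\pi\colon U(B(H))\to U((1-p)B(H)(1-p))$ within $O(\varepsilon')$ of $\varphi_1$ on unitaries. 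Concretely, we would average: the unitary group is not amenable, so we would instead exploit uniform perfectness. Boundedly many commutators turn an $\varepsilon$-homomorphism into a quasi-homomorphism with defect controlled by Burger--Ozawa--Thom type results, in the spirit of \cite{BMT}.
\item Feed $\pi$ into Theorem~\ref{T.Ulam2} to obtain $\Phi_1$.
\end{enumerate}

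Finally we set $\psi=\Phi_0+\Phi_1$. This is a $*$-homomorphism because $p$ almost commutes with everything and both pieces are genuine homomorphisms into complementary corners, after a small correction of $p$ to an exactly invariant projection. The main obstacle is step~2: producing an exact homomorphism from the nonamenable unitary group of $B(H)$. We expect to need uniform perfectness and bounded commutator width of $U(B(H))$, which holds because $B(H)\cong M_2(B(H))$ and infinite projections are halvable, so as to apply stability of $\varepsilon$-homomorphisms for uniformly perfect groups with bounded generation. If that fails, the fallback is to use $B(H)\cong M_2(B(H))$ directly. One would first stabilize $\varphi_1$ on a copy of $M_{2^\infty}$ via Theorem~\ref{thm:MV_AF}, and then on the commutant, iterating the matrix-unit trick to pin down $\varphi_1$ on the projections that generate $B(H)$.
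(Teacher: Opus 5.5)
Your first half is the paper's. You replace $\varphi|_{K(H)}$ by a genuine representation, let $p$ be its support, extend it to a $*$-homomorphism $\Phi_0\colon B(H)\to pB(H)p$, and note that $p\varphi p\approx\Phi_0$ while $\varphi_1=(1-p)\varphi(\cdot)(1-p)$ is small on $K(H)_1$. The paper builds the extension as an ultraweak limit of $\varphi$ composed with compressions plus Choi's multiplicative-domain lemma, which is a cosmetic difference.

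The gap is step~2, which cannot work as formulated. Take $\varphi(x)=VxV^*$ with $V$ a non-unitary isometry. Then $p=VV^*\neq 1$ and $\varphi_1=0$, so no homomorphism into the unitary group of $(1-p)B(H)(1-p)$ is close to $\varphi_1$ on unitaries. Theorem~\ref{T.Ulam2}, which needs $\varphi_1(1)$ near the unit of the target corner, does not apply. Cutting down to the support of $\varphi_1(1)$ first just moves the whole difficulty into showing that this support is $0$.

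Nothing you cite produces exact homomorphisms here either. The averaging from \cite{BMT} used in Theorem~\ref{T.Abelian+} needs an invariant mean, which the discrete group $U(B(H))$ lacks. You give no argument that bounded commutator width yields an exact representation near an $\varepsilon$-representation into the unitary group of a von Neumann algebra. A telling symptom is that separability of $H$ is never used. Your steps would apply verbatim to a nonseparable target $B(K)$, whose corner can carry genuine representations of $\QH$. Step~2 would then amount to Ulam stability for approximate representations of $\QH$, which you yourself note is not known. The $M_{2^\infty}$ fallback does not reach all of $B(H)$ either.

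The missing idea is that for separable $H$ there is nothing to stabilize. The only $*$-homomorphism $B(H)\to B(H)$ killing $K(H)$ is $0$, and the paper proves a quantitative version (Lemmas~\ref{lem:manyproj}, \ref{lem:separated} and Proposition~\ref{prop:calkin_kill}).

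\begin{enumerate}
\item Take an almost disjoint family $\{A_t\}_{t\in T}$ of infinite subsets of $\bbN$ with $|T|=2^{\aleph_0}$, and let $P_t$ be the diagonal projection onto $\overline{\mathrm{span}}\{e_n:n\in A_t\}$.
\item For $s\neq t$, $P_sP_t$ has finite rank, and $\varphi_1$ is an $\eta$-$*$-homomorphism that is $\eta$-small on $K(H)_1$, with $\eta\to 0$ as $\varepsilon\to 0$. So the self-adjoint contractions $a_t=\varphi_1(P_t)$ satisfy $\|a_t^2-a_t\|\le\eta$ and $\|a_sa_t\|\le 2\eta$.
\item Hence the $a_t$ are $O(\eta)$-close to projections $q_t$ with $\|q_sq_t\|=O(\eta)<1$. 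By Lemma~\ref{lem:separated}, only countably many $q_t$ are nonzero in the separable space $(1-p)H$.
\item Write $P_t=V_tV_t^*$ with $V_t$ an isometry. Approximate multiplicativity gives $\bigl|\|a_t\|-\|\varphi_1(1)\|\bigr|=O(\eta)$ for every $t$.
\item Since $\varphi_1(1)$ is itself an almost projection, this forces $\|\varphi_1(1)\|=O(\eta)$. Then $\varphi_1(x)\approx\varphi_1(x)\varphi_1(1)\approx 0$ uniformly on the unit ball.
\end{enumerate}

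Thus $\psi=\Phi_0$ already works, and no homomorphism out of the nonamenable group $U(B(H))$ is needed.
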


First of all, the  theorem of McKenney--Vignati (Theorem \ref{thm:MV_AF}\eqref{I.MV_fin}) allows us to replace any given $\varepsilon$-$*$-homomorphisms $\varphi \colon B(H) \to B(H)$ by a uniformly norm-close map $\psi$ such that $\psi|_{K(H)}$ is a $*$-homomorphism. Hence, without loss of generality, we assume that $\varphi|_{K(H)}$ is a $*$-homomorphism already. Now, fix an orthonormal basis $(e_k)_{k\ge1}$ and 
let $p_n$ be the projection onto $\mathrm{span}\{e_1,\dots,e_n\}$. Let
\[
\psi_n:\BH\to M_n(\bbC),\qquad \psi_n(x)=p_nxp_n,
\]
where we identify $p_n\BH p_n$ with $M_n(\bbC)$. We set $\varphi_n := \varphi|_{M_n} \circ \psi_n \colon \BH \to \BH$. Fix a free ultrafilter $\mathcal{U}$ on $\mathbb{N}$ and define a map $\Phi:\BH\to\BH$ by taking the weak ultralimit by the condition
\[
\langle \Phi(x)\xi,\eta\rangle := \lim_{n\to\mathcal{U}} \langle \varphi_n(x)\xi,\eta\rangle
\qquad (x\in\BH,\ \xi,\eta\in H).
\]
The limit exists for each matrix coefficient and defines a bounded operator, i.e., the unit ball in the norm is weakly compact.

\begin{proposition}
The map $\Phi: \BH_{\le 1}\to \BH_{\le 1}$ is linear, completely positive and contractive.
Moreover, for each compact operator $k\in K(H)$ one has $\Phi(k)=\varphi(k)$.
\end{proposition}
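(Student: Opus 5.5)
The plan is to check each property first for the finite-stage maps $\varphi_n=\varphi|_{M_n}\circ\psi_n$, and then show that it survives the weak ultralimit along $\cU$. The key input is the reduction made just before the proposition: $\varphi|_{K(H)}$ is a genuine $*$-homomorphism.

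\emph{Step 1: properties of $\varphi_n$.} The corner $p_n\BH p_n$ is a \cstar-subalgebra of $K(H)$. Hence $\varphi|_{p_n\BH p_n}$ is a $*$-homomorphism, so it is linear, completely positive and contractive. The compression $\psi_n(x)=p_nxp_n$ is linear, completely positive and contractive. Therefore each composition $\varphi_n$ is a linear, completely positive, contractive map $\BH\to\BH$.

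\emph{Step 2: passage to the limit.} Well-definedness is as indicated in the text. For fixed $x$, the sesquilinear forms $(\xi,\eta)\mapsto\langle\varphi_n(x)\xi,\eta\rangle$ are bounded by $\|x\|\,\|\xi\|\,\|\eta\|$. Their $\cU$-limit is again a bounded sesquilinear form of norm at most $\|x\|$, so it is given by an operator $\Phi(x)$ with $\|\Phi(x)\|\le\|x\|$. This gives contractivity.

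Linearity follows because each $\varphi_n$ is linear and ultralimits of scalars are linear. For complete positivity, fix $k$ and a positive matrix $[x_{ij}]\in M_k(\BH)$, and fix vectors $\xi_1,\dots,\xi_k\in H$. Complete positivity of $\varphi_n$ gives
\[
\sum_{i,j}\langle\varphi_n(x_{ij})\xi_j,\xi_i\rangle\ge 0 .
\]
This is a finite sum of matrix coefficients. Taking the limit along $\cU$ gives $\sum_{i,j}\langle\Phi(x_{ij})\xi_j,\xi_i\rangle\ge0$, so $[\Phi(x_{ij})]\ge0$.

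\emph{Step 3: agreement on compacts.} Let $k\in K(H)$. Then $p_nkp_n\to k$ in norm, since $p_n\to1$ strongly and $k$ is compact. Since $\varphi|_{K(H)}$ is a $*$-homomorphism, it is contractive and linear on $K(H)$. Hence
\[
\|\varphi_n(k)-\varphi(k)\|=\|\varphi(p_nkp_n-k)\|\le\|p_nkp_n-k\|\to0 .
\]
Norm convergence implies convergence of all matrix coefficients, so the weak ultralimit is $\varphi(k)$, that is, $\Phi(k)=\varphi(k)$.

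There is no serious obstacle. The only point needing care is the identification of $M_n(\bbC)$ with the corner $p_n\BH p_n\subseteq K(H)$: this is what ensures that $\varphi|_{M_n}$ is a genuine $*$-homomorphism, rather than only an $\varepsilon$-$*$-homomorphism. Once that is in place, every property is either preserved by composition or is a closed condition on finitely many matrix coefficients, and so passes to the weak ultralimit.
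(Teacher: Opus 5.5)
Your proposal is correct and follows essentially the paper's argument: each $\varphi_n$ is linear, completely positive and contractive, and these properties pass to the pointwise weak ultralimit. The only difference is cosmetic. For compacts you use the norm convergence $p_nkp_n\to k$ directly, whereas the paper uses a density argument with finite-rank operators satisfying $\psi_n(k)=k$ eventually.
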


\begin{proof}
Linearity and complete positivity are preserved under pointwise ultraweak limits.
Contractivity holds because each $\varphi_n$ is contractive and the unit ball is ultraweakly closed. Since $\psi_n(k) = k$ eventually for a norm dense subset of $K(H)$, the rest of the claim follows.
\end{proof}

The standard multiplicative domain lemma goes back to Choi and is well-covered in operator algebra texts, see \cite[\S3]{Paulsen} or \cite{Choi74}. Let us recall it here for reader's convenience:

\begin{lemma}\label{lem:md}
Let $A,B$ be $C^*$-algebras and $\Psi\colon A\to B$ be unital completely positive.
Define the multiplicative domain
\[\mathrm{md}(\Psi)=\{a\in A: \Psi(a^*a)=\Psi(a)^*\Psi(a)\ \text{and}\ \Psi(aa^*)=\Psi(a)\Psi(a)^*\}.\]
Then $\mathrm{md}(\Psi)$ is a $C^*$-subalgebra of $A$, and for every $a\in\mathrm{md}(\Psi)$ and $x\in A$ one has
\[\Psi(ax)=\Psi(a)\Psi(x),\qquad \Psi(xa)=\Psi(x)\Psi(a).\]
In particular, if $\Psi$ is a $*$-homomorphism on a $C^*$-subalgebra $D\subseteq A$, then $D\subseteq \mathrm{md}(\Psi)$.
\end{lemma}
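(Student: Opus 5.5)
The statement to prove is Choi's multiplicative domain lemma, Lemma~\ref{lem:md}. I would derive everything from the Kadison--Schwarz inequality for unital completely positive maps (indeed, $2$-positivity suffices):
\[
\Psi(x)^*\Psi(x)\leq \Psi(x^*x)\qquad (x\in A).
\]
This follows by applying the positive map $\Psi\otimes\mathrm{id}_{M_2}$ to the positive matrix $\begin{pmatrix} 1 & x\\ 0&0\end{pmatrix}^*\begin{pmatrix} 1 & x\\ 0&0\end{pmatrix}$. The resulting positive $2\times 2$ matrix has entries $1,\Psi(x),\Psi(x)^*,\Psi(x^*x)$, and a standard Schur complement argument gives the inequality.

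The key step is a one-sided version of the conclusion. Suppose $a\in A$ satisfies $\Psi(a^*a)=\Psi(a)^*\Psi(a)$. For $x\in A$, I would again apply $\Psi\otimes \mathrm{id}_{M_2}$, this time to the positive matrix
\[
\begin{pmatrix} a & x\end{pmatrix}^*\begin{pmatrix} a & x\end{pmatrix}=\begin{pmatrix} a^*a & a^*x\\ x^*a & x^*x\end{pmatrix}.
\]
Subtract the positive matrix
\[
\begin{pmatrix}\Psi(a)^*\\ \Psi(x)^*\end{pmatrix}\begin{pmatrix}\Psi(a)&\Psi(x)\end{pmatrix},
\]
and use Kadison--Schwarz on the inner matrix. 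The result is a positive matrix whose $(1,1)$ entry $\Psi(a^*a)-\Psi(a)^*\Psi(a)$ vanishes. A positive $2\times2$ operator matrix with vanishing $(1,1)$ entry has vanishing off-diagonal entries. Hence $\Psi(a^*x)=\Psi(a)^*\Psi(x)$ for all $x$.

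This is the only real computation. The subtle point is to make sure the subtracted matrix is dominated correctly, so I expect it to be the main obstacle. If the domination is delicate, I would fall back on the Stinespring dilation $\Psi=V^*\rho(\cdot)V$ with $V$ an isometry. The hypothesis then reads $\|(1-VV^*)\rho(a)V\xi\|=0$, so $\rho(a)V=V\Psi(a)$. Multiplying $\Psi(xa)=V^*\rho(x)\rho(a)V$ on the right gives $\Psi(xa)=\Psi(x)\Psi(a)$ directly.

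Either way, the set $L=\{a: \Psi(a^*a)=\Psi(a)^*\Psi(a)\}$ satisfies $\Psi(xa)=\Psi(x)\Psi(a)$ for $a\in L$ and all $x\in A$. Symmetrically, the set $R=\{a:\Psi(aa^*)=\Psi(a)\Psi(a)^*\}$ satisfies $\Psi(ax)=\Psi(a)\Psi(x)$ for $a\in R$ and all $x\in A$.

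It follows that $\mathrm{md}(\Psi)=L\cap R$ has both bimodule identities. It is closed under adjoints by symmetry of the definition. It is a linear subspace, since the characterisation $\rho(a)V=V\Psi(a)$ (and its adjoint version) is linear in $a$. It is norm closed by continuity of $\Psi$. For products, if $a,b\in\mathrm{md}(\Psi)$ then
\[
\Psi((ab)^*ab)=\Psi(b^*a^*ab)=\Psi(b)^*\Psi(a^*a)\Psi(b)=\Psi(ab)^*\Psi(ab),
\]
using the module property twice, and similarly for $ab(ab)^*$. So $\mathrm{md}(\Psi)$ is a $C^*$-subalgebra.

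The final claim is immediate: if $\Psi|_D$ is a $*$-homomorphism, then every $d\in D$ satisfies $\Psi(d^*d)=\Psi(d)^*\Psi(d)$ and $\Psi(dd^*)=\Psi(d)\Psi(d)^*$, so $D\subseteq\mathrm{md}(\Psi)$.
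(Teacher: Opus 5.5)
Your proof is correct and is the standard argument in the sources the paper cites in place of a proof (Paulsen, \S 3; Choi). The domination you flag as the main obstacle is automatic: it is the Kadison--Schwarz inequality for the unital completely positive map $\Psi\otimes\mathrm{id}_{M_2}$ at $X=\begin{pmatrix} a & x\\ 0 & 0\end{pmatrix}$, because $(\Psi\otimes\mathrm{id}_{M_2})(X)^*(\Psi\otimes\mathrm{id}_{M_2})(X)$ is exactly the matrix you subtract, so the Stinespring fallback is not needed.
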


\begin{proposition}\label{prop:bimodule}
Then $ K(H)\subseteq \mathrm{md}(\Phi)$ and hence $\Phi$ is a $ K(H)$-bimodule map:
\[\Phi(k_1 x k_2)=\Phi(k_1)\,\Phi(x)\,\Phi(k_2)\qquad(k_1,k_2\in K(H),\ x\in\BH).\]
Moreover, letting $p:=\Phi(1)$, one has for all $x\in B(H)$:
\[\Phi(x)=p\Phi(x)=\Phi(x)p.
\]
For $x,y \in B(H)$, we also have $\Phi(xy)p=\Phi(x)\Phi(y)p$ for all $x,y \in B(H)$.
\end{proposition}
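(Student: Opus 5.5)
Here $\Phi$ is the ultraweak limit of the maps $\varphi_n=\varphi|_{M_n}\circ\psi_n$ constructed above. I would prove the three assertions in order, relying on the previous Proposition ($\Phi$ is linear, completely positive, contractive, and $\Phi(k)=\varphi(k)$ for compact $k$) and on Lemma~\ref{lem:md}.

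\emph{Step 1: $K(H)\subseteq\mathrm{md}(\Phi)$.} Since $\varphi|_{K(H)}$ is a $*$-homomorphism and $\Phi$ agrees with it on $K(H)$, every $k\in K(H)$ satisfies $\Phi(k^*k)=\Phi(k)^*\Phi(k)$ and $\Phi(kk^*)=\Phi(k)\Phi(k)^*$. Lemma~\ref{lem:md} is stated for unital maps, and $\Phi$ need only be contractive. I would either note that Choi's Schwarz-inequality argument works for any contractive completely positive map, or unitize: put $\tilde\Phi(x)=\Phi(x)+\omega(x)(1-\Phi(1))$ for a state $\omega$ vanishing on $K(H)$ (a singular state). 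This $\tilde\Phi$ is unital and completely positive, and it coincides with $\Phi$ on $K(H)$. A short check is needed that the multiplicative-domain identities for $\tilde\Phi$ still give the bimodule identities for $\Phi$ when we multiply by compacts. The cleanest route is probably the direct Schwarz-inequality argument $\Phi(x)^*\Phi(x)\le\|\Phi\|\Phi(x^*x)$, which holds for contractive completely positive maps. Lemma~\ref{lem:md} then gives $\Phi(k_1xk_2)=\Phi(k_1)\Phi(x)\Phi(k_2)$.

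\emph{Step 2: $\Phi(x)=p\Phi(x)=\Phi(x)p$ with $p=\Phi(1)$.} Let $p_m$ be the finite-rank projections above, so $p_m\to1$ strictly, and put $q_m=\Phi(p_m)=\varphi(p_m)$. These are increasing projections. The key observation is that $\varphi_n(1)=\varphi(p_n)=q_n$, so $p=\Phi(1)=\mathrm{WOT}\text{-}\lim_{\cU}q_n=\sup_m q_m$, which is a projection. By Step 1, $q_m\Phi(x)q_m=\Phi(p_mxp_m)$. Similarly, for fixed $m$ and $n\ge m$ we have $\psi_n(p_m x)=p_mp_nxp_n$, and multiplicativity of $\varphi$ on $K(H)$ gives $\varphi_n(p_mx)=q_m\varphi_n(x)$ whenever $p_mp_nxp_n$ lies in $M_n$. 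Passing to the limit gives $\Phi(p_mx)=q_m\Phi(x)$. For the identity $\Phi(x)=p\Phi(x)$ I would instead argue via positivity: $(1-p)\Phi(x)$ vanishes because $\|\Phi(x)\xi\|$ is controlled through complete positivity by $\Phi(1)=p$. Concretely, the Schwarz inequality gives $\Phi(x)\Phi(x)^*\le\Phi(xx^*)\le\|x\|^2p$, so the range of $\Phi(x)$ lies in $pH$. This gives $p\Phi(x)=\Phi(x)$, and taking adjoints gives $\Phi(x)p=\Phi(x)$.

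\emph{Step 3: $\Phi(xy)p=\Phi(x)\Phi(y)p$.} Since $p=\sup_m q_m$ strongly, it suffices to show $\Phi(xy)q_m=\Phi(x)\Phi(y)q_m$ for every $m$ and then let $m\to\infty$ in the strong topology. By Step 1, $\Phi(xy)q_m=\Phi(xyp_m)$ and $\Phi(y)q_m=\Phi(yp_m)$. So the claim reduces to $\Phi(x\,yp_m)=\Phi(x)\Phi(yp_m)$ for the compact operator $k=yp_m$, that is, to showing $\Phi(xk)=\Phi(x)\Phi(k)$ for compact $k$ and arbitrary $x$. That is the bimodule property from Step 1 applied with $k$ on the right. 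The main obstacle I expect is Step 1 itself: justifying the multiplicative-domain lemma for the non-unital map $\Phi$ and confirming that $\Phi$ really restricts to the $*$-homomorphism $\varphi$ on all of $K(H)$, not just on a dense subset. The latter follows from norm continuity of $\Phi$ together with the fact that $\varphi|_{K(H)}$, being a $*$-homomorphism, is also continuous.
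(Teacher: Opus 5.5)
Your proof is correct and follows the paper's route. Both use the multiplicative domain lemma on $K(H)$, the identification $p=\Phi(1)=\sup_m\varphi(p_m)$, and the computation $\Phi(xy)\Phi(p_m)=\Phi(xyp_m)=\Phi(x)\Phi(yp_m)=\Phi(x)\Phi(y)\Phi(p_m)$ followed by a strong limit in $m$. The only difference is order: the paper first obtains $\Phi=p\Phi p$, so that $\Phi\colon B(H)\to pB(H)p$ is unital and Lemma~\ref{lem:md} applies verbatim. You instead invoke the contractive completely positive version of Choi's lemma, then derive $\Phi=p\Phi p$ from $\Phi(x)\Phi(x)^*\le\Phi(xx^*)\le\|x\|^2p$, which is equally valid and makes the paper's unexplained ``hence'' explicit.
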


\begin{proof} Note that $p = \sup_n \varphi(p_n)$ and hence $\Phi(x)=p\Phi(x)p$ for all $x \in B(H)$. Consider the unital completely positive map $\Phi \colon B(H) \to p B(H)p$.
Since $\Phi|_{ K(H)}$ is a $*$-homomorphism, Lemma~\ref{lem:md} gives $ K(H)\subseteq \mathrm{md}(\varphi)$ and therefore bimodularity. For $k$ arbitrary compact, we compute $$\Phi(xy)\Phi(k)=\Phi(xyk) = \Phi(x)\Phi(yk)=\Phi(x)\Phi(y)\Phi(k).$$ Since $\sup_n\Phi(p_n) = p$, this implies the claim.
\end{proof}

Thus, $\Phi$ is multiplicative on the image of $p$ and it remains to show that the complementary part of $\varphi$ is uniformly small in the operator norm. We define
\[\varphi^{\perp}(x):=(1-p)\varphi(x)(1-p),\qquad x\in\BH.\] Note that for all compact operators $k \in K(H)$, we have
$$\varphi^{\perp}(k) = (1-p)\Phi(k)(1-p) =0.$$ Thus, picking a section $s \colon \QH_{\le 1} \to B(H)_{\le 1}$, we may change $\varphi^{\perp}$ by a uniformly small amount in operator norm and assume that it is well-defined on $\QH_{\le 1}$. We claim that the induced map $\varphi^{\perp} \colon Q(H)_{\le 1} \to B(H)_{\le 1}$ is almost multiplicative.

Indeed, for $x,y \in Q(H)_{\le 1}$ we have
\begin{eqnarray*}
\varphi^{\perp}(xy) - \varphi^{\perp}(x) \varphi^{\perp}(y) 
&=_{\hspace{0.13cm}}& (1-p)(\varphi(xy)-\varphi(x)\varphi(y))(1-p) - (1-p)\varphi(x)p \varphi(y) (1-p) \\
&=_{\varepsilon}& - (1-p)\varphi(x)p \varphi(y) (1-p)
\end{eqnarray*}

However, we have
\begin{eqnarray*} p \varphi(x)(1-p) 
&=_{\hspace{0.13cm}}&\WOTlim_{n \to \mathcal U} \varphi(p_n) \varphi(x) (1-p) \\
&=_{\varepsilon}&\WOTlim_{n \to \mathcal U} \varphi(p_n x) (1-p) \\
&=_{\varepsilon}&\WOTlim_{n \to \mathcal U} \Phi(p_n x)(1-p) \\
&=_{\hspace{0.13cm}}& 0.
\end{eqnarray*}

Thus, we conclude that that $\varphi^{\perp}$ is $3 \varepsilon$-multiplicative on $\QH_{\le 1}$. Approximate additivity and $*$-preservation are shown in a similar way.

Our aim is to prove that $\varphi^{\perp}(x)$ is uniformly small in the operator norm in $B(H)_{\le 1}$ for all $x \in Q(H)_{\le 1}$. We are going to use a quantitative version of the proof that $\QH$ has no $*$-representation on a separable Hilbert space. This argument is based on the fact that $\QH$ contains an uncountable family of pairwise orthogonal nonzero projections.
This can be seen via almost disjoint families.

\begin{lemma}\label{lem:manyproj}
There exists a family $(e_t)_{t\in T}$ of mutually unitarily equivalent nonzero projections in $\QH$ with $|T|=2^{\aleph_0}$ and
\[e_t e_s=0\quad\text{for all }t\ne s.
\]
\end{lemma}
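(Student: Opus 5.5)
The plan is the classical almost disjoint families construction. Fix a bijection between $\bbN$ and the nodes of the full binary tree $2^{<\bbN}$ (finite $0$--$1$ sequences). For each $t\in T=2^{\bbN}$ let $A_t\subseteq\bbN$ be the set of (indices of) initial segments $t\restriction n$, $n\in\bbN$. Each $A_t$ is infinite, and for $t\neq s$ the intersection $A_t\cap A_s$ is finite: it consists only of the initial segments of length at most the first place where $t$ and $s$ differ. Thus $(A_t)_{t\in T}$ is an almost disjoint family of infinite subsets of $\bbN$ of cardinality $2^{\aleph_0}$.

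Using the fixed orthonormal basis $(e_k)$, let $P_t\in\BH$ be the projection onto $\overline{\mathrm{span}}\{e_k:k\in A_t\}$ and set $e_t=\pi(P_t)$.

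\begin{enumerate}
\item Since $A_t$ is infinite, $P_t$ has infinite rank, so it is not compact and $e_t\neq 0$.
\item For $t\neq s$ the projections $P_t$ and $P_s$ are diagonal with respect to the same basis, so they commute. Their product $P_tP_s$ is the projection onto $\mathrm{span}\{e_k:k\in A_t\cap A_s\}$, which has finite rank. Hence $e_te_s=\pi(P_tP_s)=0$.
\item For unitary equivalence, I would first find an isometry $V_t$ with $V_tV_t^*=P_t$, mapping $e_j$ to $e_{\sigma_t(j)}$ for a bijection $\sigma_t\colon\bbN\to A_t$. Then $\pi(V_t)$ is an isometry in $\QH$, and it is unitary precisely when $1-P_t$ is compact, which is typically not the case. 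So instead I would use Murray--von Neumann equivalence and upgrade it. Both $1-P_t$ and $1-P_s$ are again projections onto spans of basis vectors.
\begin{itemize}
\item If both complements $\bbN\setminus A_t$ and $\bbN\setminus A_s$ are infinite, choose a bijection $\bbN\to\bbN$ carrying $A_t$ onto $A_s$ and $\bbN\setminus A_t$ onto $\bbN\setminus A_s$. The induced permutation unitary $U$ satisfies $UP_tU^*=P_s$, so $\pi(U)$ implements the equivalence in $\QH$.
\item To guarantee that all complements are infinite, it suffices to choose the enumeration of $2^{<\bbN}$ so that the union of all branches misses infinitely many indices. For example, biject $2^{<\bbN}$ with the even numbers only. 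Then every $A_t$ consists of even numbers, so every complement contains all odd numbers.
\end{itemize}
\end{enumerate}

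The only point needing real care is step 3, namely that the equivalence is genuinely unitary in $\QH$ and not merely Murray--von Neumann. Arranging that all the sets $A_t$ have infinite complements, as above, resolves it. With that, $(e_t)_{t\in T}$ is a family of pairwise orthogonal, mutually unitarily equivalent, nonzero projections in $\QH$ with $|T|=2^{\aleph_0}$.
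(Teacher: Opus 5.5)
Your proposal is correct and follows the same route as the paper: an almost disjoint family of size $2^{\aleph_0}$, realized as diagonal projections modulo the compacts. It also verifies the mutual unitary equivalence via permutation unitaries, which the paper's proof leaves unaddressed. The even-numbers device is unnecessary, though: for distinct members $A_t\neq A_s$ of any almost disjoint family, $A_s\setminus A_t$ is an infinite subset of $\bbN\setminus A_t$, so every complement is automatically infinite.
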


\begin{proof}
Let $(e_n)_{n\in\mathbb N}$ be the standard basis of $\ell^2(\mathbb N)$.
Fix an almost disjoint family $\mathcal A\subseteq [\mathbb N]^{\infty}$ of infinite subsets of $\mathbb N$ with $|\mathcal A|=2^{\aleph_0}$ (existence is standard in ZFC; see e.g. \cite[Ch.~I, \S2]{FarahAQ}).
For $A\in\mathcal A$ let $P_A\in B(\ell^2(\mathbb N))$ be the diagonal projection onto $\overline{\mathrm{span}}\{e_n:n\in A\}$.
Then $P_A$ has infinite rank, hence $\pi(P_A)\neq 0$ in $\QH$.
If $A,B\in\mathcal A$ are distinct, then $A\cap B$ is finite, hence $P_A P_B$ is finite-rank and therefore compact.
Thus $\pi(P_A)\pi(P_B)=\pi(P_A P_B)=0$ in $\QH$.
Taking $T=\mathcal A$ and $e_A:=\pi(P_A)$ completes the proof.
\end{proof}

\begin{lemma}\label{lem:separated}
Let $K$ be a separable Hilbert space and let $\delta\in[0,1)$.
If $(q_i)_{i\in I}\subseteq B(K)$ are nonzero projections such that
\[\|q_i q_j\|\le \delta\qquad(i\ne j),\]
then $I$ is countable.
\end{lemma}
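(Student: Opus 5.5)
The plan is to pick a countable dense subset of $K$ and show that distinct projections must be "witnessed" by distinct vectors from it, yielding an injection of $I$ into a countable set.

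\emph{Step 1: each $q_i$ has an almost-fixed vector from the dense set.} Let $D\subseteq K$ be a countable dense subset of the unit sphere of $K$. Fix $\eta>0$ small, to be determined by $\delta$. For each $i\in I$ pick a unit vector $\xi_i\in q_iK$; this is possible since $q_i\neq 0$. Then pick $d_i\in D$ with $\|d_i-\xi_i\|<\eta$. Since $q_i\xi_i=\xi_i$, we get
\[
\|q_id_i-d_i\|\le \|q_i(d_i-\xi_i)\|+\|\xi_i-d_i\|<2\eta .
\]

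\emph{Step 2: the map $i\mapsto d_i$ is injective.} Suppose $i\neq j$ but $d_i=d_j=d$. Then $q_jd\approx_{2\eta} d$, and therefore
\[
q_iq_jd\approx_{2\eta} q_id\approx_{2\eta} d .
\]
Hence $\|q_iq_j\|\ge \|q_iq_jd\|\ge 1-4\eta$. Choose $\eta$ with $4\eta<1-\delta$, which is possible because $\delta<1$. This gives $\|q_iq_j\|>\delta$, contradicting the hypothesis. So $i\mapsto d_i$ is an injection of $I$ into the countable set $D$, and $I$ is countable.

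\emph{Where care is needed.} The only delicate point is this choice of $\eta$. The strict inequality $\delta<1$ is exactly what leaves room for the approximation errors. The axiom of choice is used only in picking the vectors $\xi_i$, which is harmless.
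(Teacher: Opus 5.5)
Your proof is correct and is essentially the paper's argument: both choose unit vectors $\xi_i\in q_iK$ and use separability of $K$. The paper shows $|\langle \xi_i,\xi_j\rangle|\le\delta$, so the $\xi_i$ are $\sqrt{2(1-\delta)}$-separated, and then cites the fact that a separated subset of a separable metric space is countable; you prove that fact inline by injecting into a countable dense set, and you check injectivity through the estimate $\|q_iq_j\|\ge 1-4\eta$ rather than an inner-product bound.
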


\begin{proof}
Pick unit vectors $\xi_i\in q_i K$.
For $i\ne j$ we have
\[|\langle \xi_i,\xi_j\rangle|=|\langle q_j\xi_i,\xi_j\rangle|\le \|q_j\xi_i\|\le \|q_j q_i\|\le \delta.
\]
Hence
$\|\xi_i-\xi_j\|^2 = 2-2\mathrm{Re}\langle \xi_i,\xi_j\rangle \ge 2(1-\delta)=:\varepsilon^2.
$
So $\{\xi_i:i\in I\}$ is an $\varepsilon$-separated subset of the unit sphere.
Since the unit sphere of a separable Hilbert space is a separable metric space, every $\varepsilon$-separated subset is countable.
\end{proof}

We also need a standard functional calculus estimate turning an ``almost projection'' into a genuine projection, which we record without proof.

\begin{lemma}\label{lem:almostproj}
Let $a\in\BH$ satisfy $a^*=a$, $\|a\|\le 1$ and $\|a^2-a\|\le \delta$ with $\delta<1/4$.
Then there exists a projection $q\in\BH$ such that $\|a-q\|\le 2\delta$.
\end{lemma}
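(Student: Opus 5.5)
We prove Lemma~\ref{lem:almostproj} by continuous functional calculus on the self-adjoint contraction $a$. Let $\sigma(a)\subseteq[-1,1]$ be its spectrum. By the spectral mapping theorem, $\|a^2-a\|=\sup_{t\in\sigma(a)}|t^2-t|\le\delta$.

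First we localize the spectrum. For $t\in\sigma(a)$ we have $|t|\,|t-1|\le\delta<1/4$. Since $t(t-1)=-1/4$ exactly at $t=1/2$, the value $t=1/2$ is excluded, and more precisely $\sigma(a)\cap\bigl(\tfrac12-\eta,\tfrac12+\eta\bigr)=\emptyset$ for some $\eta>0$. Hence $\sigma(a)$ splits into a part near $0$ and a part near $1$. To make this quantitative, take $t\in\sigma(a)$ with $t\le 1/2$. Then $|t-1|\ge 1/2$, so $|t|\le 2\delta$. Symmetrically, if $t\ge 1/2$ then $|t|\ge1/2$, so $|t-1|\le 2\delta$. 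Thus
\[
\sigma(a)\subseteq[-2\delta,2\delta]\cup[1-2\delta,1+2\delta].
\]
These two intervals are disjoint because $2\delta<1/2$.

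Next we build the projection. Let $\chi$ be the function equal to $0$ on the first interval and $1$ on the second. It is continuous on $\sigma(a)$, so $q:=\chi(a)$ is defined by continuous functional calculus and lies in $C^*(a)\subseteq B(H)$. Since $\chi=\chi^2=\bar\chi$ on $\sigma(a)$, $q$ is a projection.

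Finally we estimate the distance. We have $\|a-q\|=\sup_{t\in\sigma(a)}|t-\chi(t)|$. By the localization step, this is at most $\max\bigl(\sup_{t\le1/2}|t|,\ \sup_{t\ge 1/2}|t-1|\bigr)\le 2\delta$.

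The only delicate point is getting the constant $2$ rather than a worse bound. A sharper computation via the roots of $t^2-t=\pm\delta$ would give about $\delta+O(\delta^2)$, but the crude bound $|t-1|\ge 1/2$ already suffices, so no real obstacle is expected. Note that $\|a\|\le 1$ is not even needed beyond ensuring $\sigma(a)$ is bounded.
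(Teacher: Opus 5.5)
Your argument is correct and complete. The paper records this lemma without proof, describing it only as a standard functional calculus estimate, and your proof is exactly that argument: the localization $\sigma(a)\subseteq[-2\delta,2\delta]\cup[1-2\delta,1+2\delta]$, the locally constant $\chi$, and the bound $\|a-q\|=\sup_{t\in\sigma(a)}|t-\chi(t)|\le 2\delta$. One small correction to your closing remark: the hypothesis $\|a\|\le 1$ plays no role whatsoever, since the spectrum of any bounded self-adjoint operator is already compact.
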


\begin{proposition}\label{prop:calkin_kill}
One has $\varphi^\perp(x)$ is uniformly small in the operator norm for $x \in B(H)_{\le 1}$, hence
$\varphi(x)=p\varphi(x)p$ for all $x\in\BH$ after uniformly changing $\varphi$ by a  small amount.
\end{proposition}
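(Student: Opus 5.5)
The plan is to argue by contradiction, quantitatively: $\varphi^\perp$, viewed as an approximate $*$-homomorphism from $\QH$ into $(1-p)\BH(1-p)\cong B(K)$ with $K=(1-p)H$ separable, cannot be large on the unit ball. Suppose $\sup_{x\in \QH_{\le 1}}\|\varphi^\perp(x)\|\ge \eta$, where $\eta$ is bounded below by a fixed constant (say $1/2$) independent of $\varepsilon$. By the reductions of Corollary~\ref{C.UlamStableCorrected} (applied to $\varphi^\perp$, which we showed is an $O(\varepsilon)$-$*$-homomorphism on $\QH_{\le 1}$), we may assume $\varphi^\perp$ is self-adjoint-preserving, homogeneous and satisfies \ref{Corrected.leq}.

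The first step converts largeness somewhere into largeness on a projection. If $\|\varphi^\perp(x)\|\ge \eta$ for some contraction $x$, then approximate multiplicativity gives $\|\varphi^\perp(x^*x)\|\ge \eta^2-O(\varepsilon)$. Next, we use that every positive contraction in $\QH$ is close in norm to a finite linear combination of projections; more simply, $\varphi^\perp(1)$ is an almost projection with $\|\varphi^\perp(1)\|\ge\|\varphi^\perp(x^*x)\|-O(\varepsilon)$ by \ref{Corrected.leq}. Hence $\|\varphi^\perp(1)\|$ is bounded away from $0$. Now let $(e_t)_{t\in T}$ be the uncountable family from Lemma~\ref{lem:manyproj}. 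Each $e_t$ is unitarily equivalent to $1$ in $\QH$: the projections $P_A$ are infinite and coinfinite after choosing $\mathcal A$ suitably, and even partial-isometry equivalence $e_t=v_tv_t^*$, $1=v_t^*v_t$ suffices. Using $\varphi^\perp(1)\approx\varphi^\perp(v_t^*)\varphi^\perp(v_t)$ and $\varphi^\perp(e_t)\approx \varphi^\perp(v_t)\varphi^\perp(v_t^*)$, we get $\|\varphi^\perp(e_t)\|\ge c>0$ uniformly in $t$, for small $\varepsilon$. Here we use that $\|y^*y\|=\|yy^*\|$.

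The second step produces genuine projections. Each $a_t=\varphi^\perp(e_t)$ is self-adjoint, has norm at most $1$, and satisfies $\|a_t^2-a_t\|\le O(\varepsilon)$. Lemma~\ref{lem:almostproj} therefore gives projections $q_t\in B(K)$ with $\|a_t-q_t\|\le O(\varepsilon)$. These are nonzero once $O(\varepsilon)<c$. For $s\ne t$ we have $e_se_t=0$, so
\[
\|q_sq_t\|\le \|a_sa_t\|+O(\varepsilon)\le \|\varphi^\perp(e_se_t)\|+O(\varepsilon)=\|\varphi^\perp(0)\|+O(\varepsilon)=O(\varepsilon)<1 .
\]
Lemma~\ref{lem:separated} then forces $T$ to be countable, which contradicts $|T|=2^{\aleph_0}$. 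Consequently $\sup\|\varphi^\perp\|\le \eta(\varepsilon)$, where $\eta(\varepsilon)$ is determined by requiring the chain of estimates above to close, and $\eta(\varepsilon)\to 0$. Finally, since $\|p\varphi(x)(1-p)\|$ and $\|(1-p)\varphi(x)p\|$ are $O(\varepsilon)$ by the computation preceding the proposition, $\varphi(x)$ is uniformly $O(\varepsilon)+\eta(\varepsilon)$-close to $p\varphi(x)p$.

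I expect the main obstacle to be the first step: obtaining a lower bound on $\|\varphi^\perp(e_t)\|$ that is \emph{uniform} in $t$ from the mere assumption that $\varphi^\perp$ is not small. The route through $\varphi^\perp(1)$ and Murray--von Neumann equivalence handles this, but it needs care. The $e_t$ must be chosen equivalent to $1$, which is easy with an almost disjoint family of infinite sets. The error terms must not depend on $t$, and they do not, since every estimate used is uniform on the unit ball. A subtle point is that the dichotomy is really "$\|\varphi^\perp(1)\|$ small versus bounded below", and the threshold constant must be fixed before $\varepsilon$ is chosen.
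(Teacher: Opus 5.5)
Your proof is correct. Its core is the paper's argument: take the almost disjoint family of Lemma~\ref{lem:manyproj}, use Lemma~\ref{lem:almostproj} to turn $a_t=\varphi^\perp(e_t)$ into projections $q_t$, and get the contradiction from Lemma~\ref{lem:separated}.

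Where you depart from the paper is in how you get uniformity in $t$ and how you pass from the $e_t$ to the whole unit ball.

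\begin{itemize}
\item The paper obtains ``all $q_t$ nonzero'' from the mutual equivalence of the $e_t$. It phrases the hypothesis as $a_{t_0}\neq 0$, although what is really needed is a quantitative lower bound on $\|a_{t_0}\|$.
\item The paper then concludes from the smallness of all $a_t$ ``by linearity and norm density in the $C^*$-subalgebra generated by $(e_t)$''. Taken literally, this controls $\varphi^\perp$ only on that abelian subalgebra, which does not even contain $1$, and not on all of $\QH_{\le 1}$.
\end{itemize}

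Your route through $\varphi^\perp(1)$ and the relations $e_t=v_tv_t^*$, $v_t^*v_t=1$ handles both points at once. It gives $\|\varphi^\perp(e_t)\|\ge\|\varphi^\perp(1)\|-O(\varepsilon)$ uniformly in $t$. Read contrapositively, it shows that smallness of the $a_t$ forces smallness of $\varphi^\perp(1)$, and hence of $\varphi^\perp$ on the whole unit ball. So your version supplies a step the paper leaves loose.

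Two small corrections.
\begin{itemize}
\item $e_t$ is not unitarily equivalent to $1$; no projection other than $1$ is. What you actually use is Murray--von Neumann equivalence with $1$, via the image in $\QH$ of an isometry onto the range of $P_A$. This holds simply because each $A\in\mathcal A$ is infinite; coinfiniteness plays no role here.
\item Your first step can be shortened. The estimate $\varphi^\perp(x)\approx_{O(\varepsilon)}\varphi^\perp(x)\varphi^\perp(1)$ gives $\|\varphi^\perp(x)\|\le\|\varphi^\perp(1)\|+O(\varepsilon)$ directly. This avoids both \ref{Corrected.leq} and the squaring, and removes the tension between fixing the threshold in advance and letting it depend on $\varepsilon$.
\end{itemize}
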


\begin{proof}
Let $(e_t)_{t\in T}$ be the pairwise orthogonal nonzero projections in $\QH$ from Lemma~\ref{lem:manyproj}.
For each $t$, set $a_t:=\varphi^{\perp}(e_t)\in (1-p)\BH(1-p)$.
Then $a_t$ is positive and $\|a_t\|\le 1$.
Moreover, since $e_t=e_t^2$ and $\varphi^{\perp}$ is $\eta$-multiplicative,
$\|a_t^2-a_t\|=\|\varphi^{\perp}(e_t^2)-\varphi^{\perp}(e_t)^2\|\le \eta.
$
Also, for $s\ne t$, $e_s e_t=0$ gives
$\|a_s a_t\| = \|\varphi^{\perp}(e_s)\varphi^{\perp}(e_t)-\varphi^{\perp}(0)\|\le \eta.
$
Assume towards a contradiction that $a_{t_0}\ne 0$ for some $t_0$.
If $\eta<1/4$, Lemma~\ref{lem:almostproj} provides, for each $t$, a projection $q_t$ with
$\|a_t-q_t\|\le 2\eta.
$
Then for $s\ne t$,
\[
\|q_s q_t\|\le \|q_s-a_s\|\,\|q_t\| + \|a_s a_t\| + \|a_s\|\,\|q_t-a_t\|
\le 2\eta + \eta + 2\eta = 5\eta.
\]
If $a_{t_0}\ne 0$ then for $\eta$ sufficiently small the corresponding $q_{t_0}$ is nonzero, and in fact all $q_t$ are nonzero for $t \in T$, since they are pairwise equivalent and $\varphi^{\perp}$ is almost multiplicative. This contradicts  Lemma \ref{lem:separated}. Therefore $a_t=0$ for all $t\in T$, and by linearity and norm density in the $C^*$-subalgebra generated by $(e_t)$, we get that $\varphi^{\perp}(x)$ is uniformly close to zero.
\end{proof}

This finishes the proof. Indeed, Proposition~\ref{prop:calkin_kill} shows that $\varphi(\BH)\subseteq p\BH p$ after changing $\varphi$ uniformly by some small amount.
It remains to show that $\Phi$ is uniformly close to $\varphi$, but 
$$\varphi(x) \varphi(p_n) \sim_{\varepsilon} \varphi(xp_n) \sim_{\varepsilon} \Phi(xp_n) = \Phi(x) \varphi(p_n)$$ for all $n \in \mathbb N$ uniformly.

\section{Applications to rigidity properties}

\subsection{Kadison--Kastler distance and poor man's Ulam stability}\label{S.KadisonKastler}

While Proposition~\ref{P.Counterexample} implies that Ulam stability fails, in this section we prove poor man's versions  still suffices to obtain strong corona rigidity results. Following the standard notation in the study of Kadison--Kastler distance, for \cstar-subalgebras $A$ and $B$ of $B(H)$ and $\gamma>0$ we write $A\subseteq_\gamma B$ if $\dist(a,B)\leq \gamma$ for all $a\in A_1$. 
The Kadison--Kastler distance between $A$ and $B$ is defined as follows (\cite[\S 2]{CSSWW:Perturbations})
\[
d(A,B)<\gamma\Leftrightarrow
(\forall a\in A_1)(\exists b\in B) \, \|a-b\|<\gamma\text{ and }
(\forall b\in B_1)(\exists a\in A) \, \|a-b\|<\gamma. 
\]
There is a subtle difference between the definitions of $d$ and $\subseteq_\gamma$, and the former is not exactly equal to the Hausdorff distance between the unit balls of $A$ and $B$. This will make no difference for our purposes. 

The notation $A\subseteq_\gamma B$ is meaningful even when $B$ is a subset of $B(H)$ which is not necessarily a \cstar-algebra, as in the following paragraph. 

\begin{definition}
	An $\varepsilon$-$*$-homomorphism $\varphi\colon A\to B$ is called \emph{$\varepsilon$-injective} if every $a\in A_1$ satisfies $\|\varphi(a)\|\geq \|a\|-\varepsilon$. 
	It is called \emph{$\varepsilon$-surjective} if $B\subseteq _\varepsilon\varphi[A]$. A $\varepsilon$-$*$-homomorphism that is both $\varepsilon$-injective and $\varepsilon$-surjective is called $\varepsilon$-isomorphism. 
\end{definition}

\begin{proposition}\label{P.AlmostInclusion}
	Suppose that $\sfA$ is a class of \cstar-algebras Ulam stable with respect to von Neumann algebras. If $A\in \sfA$ has finite nuclear dimension $n$ and separable, and $B$ is separable, then the following are equivalent. 
	\begin{enumerate}
		\item There is an $\varepsilon$-isometric $\varepsilon$-$*$-homomorphism $\varphi\colon A\to B$ for some $\varepsilon$ such that  with $\eta=f_\sfA(\varepsilon)$  we have $\epsilon+\eta<1$ and 
		$2(\dimnuc(A)+1)(2\eta+\eta^2)(2+2\eta+\eta^2)<1/210000$. 
		\item $A$ is isomorphic to a \cstar-subalgebra of $B$. 
	\end{enumerate}
\end{proposition}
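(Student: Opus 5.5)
The implication (2)$\Rightarrow$(1) is trivial: an injective $*$-homomorphism $A\to B$ is isometric, so it is a $0$-isometric $0$-$*$-homomorphism, and $\varepsilon=0$ gives $\eta=f_\sfA(0)$. For this to be small we should read $f_\sfA(0)=0$, or use $\varepsilon>0$ arbitrarily small, since $f_\sfA(\varepsilon)\to 0$. All the work is therefore in (1)$\Rightarrow$(2). The plan has three steps: pass to a von Neumann algebra target, apply Ulam stability there, and then move the resulting copy of $A$ back into $B$ by Kadison--Kastler stability for algebras of finite nuclear dimension, as in \cite{CSSWW:Perturbations}.

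\textbf{Step 1.} Represent $B$ faithfully on a separable Hilbert space $H$ and regard $\varphi$ as an $\varepsilon$-$*$-homomorphism $A\to B(H)$; its target is the von Neumann algebra $B(H)$. Since $A\in\sfA$, there is a $*$-homomorphism $\Phi\colon A\to B(H)$ with $\sup_{a\in A_1}\|\Phi(a)-\varphi(a)\|\le\eta=f_\sfA(\varepsilon)$. From $\|\varphi(a)\|\ge\|a\|-\varepsilon$ we get $\|\Phi(a)\|\ge \|a\|-\varepsilon-\eta$ on $A_1$. Because $\varepsilon+\eta<1$, $\Phi$ cannot kill any nonzero element: if $\Phi(a)=0$ with $\|a\|=1$, this bound is violated. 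So $\Phi$ is injective, hence isometric, and $C:=\Phi[A]\cong A$.

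\textbf{Step 2.} Show that $C\subseteq_\gamma B$ with $\gamma$ of order $\eta$. For $c=\Phi(a)$ with $a\in A_1$, the element $\varphi(a)\in B$ satisfies $\|c-\varphi(a)\|\le\eta$. The slightly odd quantity $2\eta+\eta^2$ in the hypothesis suggests that the authors first pass to the bicommutant or unitization, or to a Kadison--Kastler-type near inclusion with a normalized constant. The factor $(2+2\eta+\eta^2)$ looks like a bound of the form $\|x\|\cdot(\text{perturbation})$ for near-inclusions of unit balls. I would aim to land precisely on a near inclusion $C\subseteq_{\gamma}B$ with $\gamma=(2\eta+\eta^2)(2+2\eta+\eta^2)$, matching the constants needed downstream.

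\textbf{Step 3.} This is the main obstacle. Invoke the near-inclusion theorem of Christensen--Sinclair--Smith--White--Winter \cite{CSSWW:Perturbations}: if $C$ is separable with $\dimnuc(C)\le n$, $D$ is a \cstar-algebra on the same Hilbert space, and $C\subseteq_\gamma D$ with $2(n+1)\gamma$ below an explicit absolute constant (here $1/210000$), then there is an injective $*$-homomorphism $C\to D$ close to the inclusion. Applied with $D=B$, it yields an embedding $A\cong C\hookrightarrow B$. The difficulty is matching the exact form of that theorem. Their near-inclusion results are stated for $C\subseteq_\gamma D$ with $C$ nuclear of finite nuclear dimension, sometimes requiring $D$ to be a von Neumann algebra or requiring passage through $D''$ and a completely positive approximation. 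I would first try the version giving embeddings into a \cstar-algebra directly, using separability of $A$ and $B$. If only the von Neumann algebra version is available, I would apply it inside $B''$ and then use their separable \cstar-algebra bootstrap, building point-norm limits of maps into $B$ through the finite nuclear dimension decompositions. In either case, the role of the displayed inequality is exactly to guarantee the smallness hypothesis of that theorem.
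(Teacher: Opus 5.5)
Your proposal is correct and follows the paper's proof. The paper also represents $B$ faithfully and uses Ulam stability into $B(H)$ to get a $*$-homomorphism $\Phi$ that is injective, hence isometric, because $\varepsilon+\eta<1$. It then notes $\Phi[A]\subseteq_\eta B$ and concludes with \cite[Theorem~6.10]{CSSWW:Perturbations}, which gives the copy of $A$ inside $B$ directly, with no detour through $B''$. Your Step~2 needs nothing beyond $\Phi[A]\subseteq_\eta B$ itself: the displayed inequality is exactly the smallness condition under which the paper applies that theorem to this near inclusion.
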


\begin{proof} Only the direct implication requires a proof. Suppose that $\varphi\colon A\to B$ is a $\varepsilon$$*$-homomorphism and fix a faithful representation of $B$ on a Hilbert space $H$. 
	Since $A\in \sfA$ there is a $*$-homomorphism $\Phi\colon A\to B(H)$ such that $\sup_{\|a\|\leq 1} \|\Phi(a)-\varphi(a)\|\leq f_\sfA(\varepsilon)$. 
	If $a\in A$ and $\|a\|=1$ then $\|\Phi(a)\|\geq \|a\|-\varepsilon-f_\sfA(\varepsilon)>0$. Since $\ker(\Phi)$ is a \cstar-algebra, this implies that $\Phi$ is injective and therefore isometric.   Write $\eta=f_\sfA(\varepsilon)$ and $\gamma=2(\dimnuc(A)+1)(2\eta+\eta^2)(2+2\eta+\eta^2)<1/210000$. 
	For $a\in A_1$ we have that $\Phi(a)\approx_{\eta} \varphi(a)\in B$, and therefore $\Phi[A]$ is an isomorphic copy of $A$ such that $\Phi[A]\subseteq_\eta B$.  Therefore \cite[Theorem~6.10]{CSSWW:Perturbations} implies that $B$ has a \cstar-subalgebra isomorphic to $A$.
\end{proof}

\begin{proposition}
	\label{P.AlmostIsomorphic}
	Suppose that $\sfA$ is a class of \cstar-algebras Ulam stable with respect to von Neumann algebras. If $A\in \sfA$ is nuclear and separable and $B$ is separable, then the following are equivalent. 
	\begin{enumerate}
		\item There is an $\varepsilon$-isometric, $\varepsilon$-surjective,  $\varepsilon$-$*$-homomorphism $\varphi\colon A\to B$ for some $\varepsilon$ such that  $\varepsilon+f_\sfA(\varepsilon)<1/420000$. 		\item $A$ is isomorphic to $B$. 
	\end{enumerate}
\end{proposition}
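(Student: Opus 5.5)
Only the direct implication needs proof, since an isomorphism is a $0$-isometric, $0$-surjective $0$-$*$-homomorphism. The plan is to imitate the proof of Proposition~\ref{P.AlmostInclusion}, replacing the near-inclusion theorem of \cite{CSSWW:Perturbations} by its Kadison--Kastler near-equality counterpart for separable nuclear algebras. Close separable nuclear \cstar-algebras are isomorphic (\cite{CSSWW:Perturbations}, the main theorem there, with explicit constant $1/420000$).

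\emph{Step 1: a faithful copy of $A$.} Fix a faithful representation $B\subseteq B(H)$ and write $\eta=f_\sfA(\varepsilon)$. Ulam stability of $\sfA$ with respect to $\sfvN$ gives a $*$-homomorphism $\Phi\colon A\to B(H)$ with $\sup_{\|a\|\le1}\|\Phi(a)-\varphi(a)\|\le\eta$. For $\|a\|=1$, $\varepsilon$-isometry gives $\|\Phi(a)\|\ge 1-\varepsilon-\eta>0$. Since $\ker\Phi$ is a closed ideal, any nonzero element of it could be normalized to contradict this bound, so $\ker\Phi=0$. Hence $\Phi$ is isometric and $A':=\Phi[A]\cong A$ is a separable nuclear \cstar-subalgebra of $B(H)$.

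\emph{Step 2: near-inclusions in both directions.} First, for $a\in A_1$ we have $\Phi(a)\approx_\eta\varphi(a)\in B$, so $A'\subseteq_\eta B$. Conversely, let $b\in B_1$. By $\varepsilon$-surjectivity there is $a\in A$ with $\|b-\varphi(a)\|\le\varepsilon$. I would first check that $a$ can be taken with norm close to $1$: since $\varphi$ is $\varepsilon$-isometric, $\|\varphi(a)\|\le 1+\varepsilon$ forces $\|a\|\lesssim 1+2\varepsilon$. After rescaling $a$ into $A_1$, at the cost of $O(\varepsilon)$ via approximate homogeneity, we get $\|b-\Phi(a)\|\le\varepsilon+\eta+O(\varepsilon)$. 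Thus $B\subseteq_{\gamma}A'$ with $\gamma$ of order $\varepsilon+\eta$, and altogether $d(A',B)$ is bounded by a quantity controlled by $\varepsilon+f_\sfA(\varepsilon)$.

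\emph{Step 3: transfer nuclearity and conclude.} Since $A'$ is nuclear and $B$ is within small Kadison--Kastler distance of $A'$, $B$ is nuclear as well, by the standard near-inclusion permanence of nuclearity used in \cite{CSSWW:Perturbations}. Both algebras are separable, so the isomorphism theorem of \cite{CSSWW:Perturbations} for separable nuclear algebras at distance below $1/420000$ yields $B\cong A'\cong A$.

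The main obstacle is the bookkeeping in Step 2. The definition of $\varepsilon$-surjectivity only provides some $a\in A$, not a contraction, and the factor lost by normalizing must be shown to keep $d(A',B)$ below the threshold of \cite{CSSWW:Perturbations}. I expect that one may have to interpret the hypothesis $\varepsilon+f_\sfA(\varepsilon)<1/420000$ modulo a harmless constant, or rescale the threshold. A secondary point is to verify that the hypotheses of the cited theorem (one side nuclear, or both) are met in the form stated there.
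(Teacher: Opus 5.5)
Your architecture is the paper's: Ulam stability gives a $*$-homomorphism $\Phi\colon A\to B(H)$ within $f_\sfA(\varepsilon)$ of $\varphi$ on $A_1$. The $\varepsilon$-isometry makes $\Phi$ injective. The near-inclusions $\Phi[A]\subseteq_{f_\sfA(\varepsilon)}B$ and $B\subseteq_{\varepsilon+f_\sfA(\varepsilon)}\Phi[A]$ then let \cite[Theorem~4.3]{CSSWW:Perturbations} finish.

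The one real problem is the normalization in Step~2. Your inference that $\|\varphi(a)\|\le 1+\varepsilon$ forces $\|a\|\lesssim 1+2\varepsilon$ is not available from the hypotheses. The $\varepsilon$-isometry is only assumed on $A_1$, and the $\varepsilon$-$*$-homomorphism axioms constrain $\varphi$ only on $A_{\le 2}$, through the terms $\varphi(a+b)$. Off that ball $\varphi$ is completely arbitrary. In fact, with $\varepsilon$-surjectivity read literally as $B\subseteq_\varepsilon\varphi[A]$, no argument can work. Define $\varphi\colon\bbC\to\bbC^2$ by $\varphi(\lambda)=(\lambda,\lambda)$ for $|\lambda|\le 2$, and let it send $\{|\lambda|>2\}$ onto $\bbC^2$. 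This is a $0$-isometric, $0$-surjective $0$-$*$-homomorphism from an algebra in $\sfFD$, yet $\bbC\not\cong\bbC^2$.

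The paper's proof takes a preimage $a$ of $b\in B_1$ and immediately concludes $\Phi(a)\approx_{\varepsilon+f_\sfA(\varepsilon)}b$. That step tacitly uses $a\in A_1$, i.e.\ it reads $\varepsilon$-surjectivity as: every $b\in B_1$ is within $\varepsilon$ of $\varphi[A_1]$. Under that reading your Step~2 needs no rescaling. You get $d(\Phi[A],B)\le\varepsilon+f_\sfA(\varepsilon)<1/420000$ at once, which is exactly why the hypothesis is phrased in terms of $\varepsilon+f_\sfA(\varepsilon)$.

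Your rescaling route does not rescue the stated constant. Even if you impose homogeneity, $\varphi(a)=\|a\|\varphi(a/\|a\|)$, which is what your norm bound implicitly uses, the rescaling costs an extra term of about $2\varepsilon$. The hypothesis $\varepsilon+f_\sfA(\varepsilon)<1/420000$ does not absorb that term, so this route cannot give the proposition as stated.

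Finally, Step~3 is correct but superfluous. As the paper applies it, \cite[Theorem~4.3]{CSSWW:Perturbations} needs only $\Phi[A]$ to be separable and nuclear, so there is no need to transfer nuclearity to $B$ first.
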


\begin{proof}
	Only the direct implication requires a proof.   Suppose that $\varphi\colon A\to B$ is a $\varepsilon$-$*$-homomorphism and fix a faithful representation of $B$ on a Hilbert space $H$. 
	As in the proof of Proposition~\ref{P.AlmostInclusion}, $A\in \sfA$ implies that there is an injective  $*$-homomorphism $\Phi\colon A\to B(H)$ such that (writing $\eta=\varepsilon+f_\sfA(\varepsilon)$) we have  $\Phi[A]\subseteq_{\eta} B$. 
	Moreover, for every $b\in B_1$ there is $a\in A$ such that $\varphi(a)\approx_\varepsilon b$, 
	and therefore $\Phi(a)\approx_{\eta} b$. This implies that $B\subseteq_\eta \Phi[A]$ and therefore $d(A,B)\leq \eta$. Since $\eta<1/210000$, 
	\cite[Theorem~4.3]{CSSWW:Perturbations} implies that  $A$ and $B$ are isomorphic. 
\end{proof}

See \cite[Theorem~6.10]{CSSWW:Perturbations} and  \cite[Theorem~11 (i) and (iii)]{christensen2010spatial} for related positive results. 
In \cite[Question~5.8]{vignati2022rigidity} it was asked whether there is $\varepsilon>0$ such that for unital separable \cstar-algebras, $\varepsilon$-isomorphism is equivalent to isomorphism. This question is clearly related to the well-studied question whether there is $\varepsilon>0$ such that if $A$ and $B$ are unital, separable \cstar-subalgebras of $B(H)$ with $d(A,B)<\varepsilon$ one necessarily has $A\cong B$.  The following partial answer to this question is an immediate consequence of Proposition~\ref{P.AlmostIsomorphic} and Theorem~\ref{T.Classifiable}. 

\begin{corollary}
	\label{C.e-isomorphic} There exists some $\varepsilon>0$ such that for stably finite Elliott-classifiable \cstar-algebras, $\varepsilon$-isomorphism implies isomorphism. \qed 
\end{corollary}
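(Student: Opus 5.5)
The plan is to combine Proposition~\ref{P.AlmostIsomorphic} with Theorem~\ref{T.Classifiable}. Let $\sfA$ be the class of all separable, simple, unital, nuclear, stably finite, $\mathcal Z$-stable \cstar-algebras satisfying the UCT; by the classification theorem these are exactly the stably finite Elliott-classifiable \cstar-algebras. By Theorem~\ref{T.Classifiable} the pair $(\sfA,\sfvN)$ is Ulam stable, witnessed by some $f_\sfA$ with $f_\sfA(\varepsilon)\to 0$ as $\varepsilon\to 0$. Hence we can choose $\varepsilon_0>0$ with $\varepsilon_0+f_\sfA(\varepsilon_0)<1/420000$. We may take $f_\sfA$ monotone, replacing it by $\sup_{\delta\le\varepsilon}f_\sfA(\delta)$ if necessary, so that the same inequality holds for every $\varepsilon\le\varepsilon_0$. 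The constant $\varepsilon_0$ will be the $\varepsilon$ in the statement.

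Now let $A,B$ be stably finite Elliott-classifiable algebras and $\varphi\colon A\to B$ an $\varepsilon$-isomorphism with $\varepsilon\le\varepsilon_0$. By definition $\varphi$ is an $\varepsilon$-injective and $\varepsilon$-surjective $\varepsilon$-$*$-homomorphism. Here $\varepsilon$-injectivity, i.e.\ $\|\varphi(a)\|\ge\|a\|-\varepsilon$, is the approximate isometry hypothesis used in Proposition~\ref{P.AlmostIsomorphic}, because the upper bound $\|\varphi(a)\|\le\|a\|(1+\varepsilon)$ is automatic by Lemma~\ref{L.approx.continuity}. Since $A$ is nuclear and separable, and $B$ is separable, Proposition~\ref{P.AlmostIsomorphic} applies to $\varphi$ and gives $A\cong B$.

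The only real subtlety is matching hypotheses. First, Proposition~\ref{P.AlmostIsomorphic} asks for an approximately isometric map, whereas the definition here gives only the lower bound $\|\varphi(a)\|\geq\|a\|-\varepsilon$; this lower bound is exactly what its proof uses to make $\Phi$ injective. Second, the Kadison--Kastler theorem from \cite{CSSWW:Perturbations} invoked in that proof needs nuclearity of one side only, and $A$ supplies it. Third, the statement is symmetric in $A$ and $B$, but only $A$ needs to be in $\sfA$, which it is. No other obstacle should arise, since all the analytic work has already been done in Theorem~\ref{T.Classifiable}.
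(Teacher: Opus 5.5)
Your proposal is correct and follows the paper's own route. The paper states the corollary as an immediate consequence of Proposition~\ref{P.AlmostIsomorphic} applied to the class in Theorem~\ref{T.Classifiable}, with $\varepsilon$ chosen so that $\varepsilon+f_{\sfA}(\varepsilon)<1/420000$. Your monotone modification of $f_{\sfA}$ is harmless but unnecessary, since an $\varepsilon$-isomorphism with $\varepsilon\le\varepsilon_0$ is automatically an $\varepsilon_0$-isomorphism.
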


\subsection{Applications to corona rigidity}\label{S.CoronaRigidity}

If $\varphi_n\colon A_n\to B_n$ is an $\varepsilon_n$-$*$-homomorphism for all $n\in \bbN$, we can define $\Phi_*\colon \prod_n A_n\to \prod_n B_n$ by 
\[
\Phi_*(a_n)=(\varphi_n(b_n)). 
\]
Then the function $\Phi$ between the unit balls of the coronas $\prod_n A_n/\bigoplus_n A_n$ and $\prod_n B_n/\bigoplus_n B_n$ that makes the diagram in Fig.~\ref{Fig.Lifting} commute (the vertical arrows present quotient maps)
\begin{figure}[h]
	\begin{tikzpicture}
		\matrix[row sep=1cm,column sep=1.5cm] 
		{
			& & \node (M1) {$\prod_n A_n$}; & \node (M2) {$\prod_n B_n$};&
			\\
			& & \node (Q1) {$\prod_n A_n/\bigoplus_n A_n$}; & \node (Q2) {$\prod_n B_n/\bigoplus_n B_n$} ;
			\\
		};
		\draw (M1) edge [->] node [above] {$\Phi_*$} (M2);
		\draw (Q1) edge [->] node [above] {$\Phi$} (Q2);
		\draw (M1) edge [->] node [left] {} (Q1);
		\draw (M2) edge [->] node [left] {} (Q2);
	\end{tikzpicture}
	\label{Fig.Lifting}
	\caption{An asymptotically algebraic homomorphism.}
\end{figure}
is a $*$-homomorphism. Remarkably, by results of \cite{mckenney2020forcing} and \cite{vignati2022rigidity}, under appropriate set-theoretic assumption (forcing axioms $\OCAT$ and $\MA$), $*$-homomorphisms between many coronas are often of this form. 
 It is considerably less surprising that the Continuum Hypothesis implies the existence of isomorphisms that are not of this form and therefore leads to independence results (\cite{ghasemi2014reduced}). 
 
The assumptions $\OCAT$ and $\MA$  used in results below are forcing axioms relatively consistent with $\ZFC$, 
see \cite[\S 8.6]{FarahCT} and \cite{Ku:Set}. 
These axioms imply strong quotient rigidity in several categories, including \cstar-algebras. See \cite{farah2025corona} and \cite[\S 17]{FarahCT}. 

In the following, taken from  \cite[Definition~3.13]{mckenney2020forcing},  the corona of a nonunital \cstar-algebra~$A$ is denoted $\cQ(A)$. 

\begin{definition}\label{D.Trivial} Suppose that $A_n$ and $B_n$, for $n\in \bbN$, are unital \cstar-algebras. A function $\Phi_*\colon \prod_n A_n\to \prod_n B_n$ is called \emph{asymptotically algebraic} if 
there are a bijection $f$ between cofinite subsets of $\bbN$  and functions\footnote{Absolutely nothing is assumed on these functions; they need not be $*$-homomorphisms, linear, or continuous.} $\varphi_n\colon  A_{f(n)}\to B_n$ for $n\in \dom(f)$ such that 
\[
\Phi_*((a_n)_n)= (\varphi(a_{f(n)})_n. 
\]
An isomorphism $\Phi\colon \cQ(\bigoplus_n A_n)\to \cQ(\bigoplus_n B_n)$ is called \emph{algebraically trivial}, if there is an asymptotically algebraic lifting $\Phi_*\colon \prod_n A_n\to \prod_n B_n$ such that each of the functions $\varphi_n$ is a *-isomorphism. 
\end{definition}

Theorem~\ref{T.CoronaRigidity+} is a consequence of our results and \cite{mckenney2020forcing}. Its assumptions are for example satisfied if $A_n$ and $B_n$ are simple AF algebras (the metric approximation property is a weakening of the completely positive approximation property, which is equivalent to nuclearity). 
The following is  Theorem~\ref{T.CoronaRigidity}. 

\begin{theorem} \label{T.CoronaRigidity+} Assume $\OCAT$ and $\MA$. 
	If $A_n$, $B_n$, for $n\in \bbN$, are stably finite,  Elliott-classifiable unital \cstar-algebras,  then the following are equivalent. 
	\begin{enumerate}
		\item The coronas of $\bigoplus_n A_n$ and $\bigoplus_n B_n$ are isomorphic. 
		\item There are a bijection $\pi$ between cofinite subsets of $\bbN$ such that $A_{\pi(n)}\cong B_n$ for all $n\in \dom(\pi)$. 
	\end{enumerate}
\end{theorem}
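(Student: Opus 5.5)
The converse implication (2)$\Rightarrow$(1) is immediate: if $\pi$ is a bijection between cofinite sets and $\alpha_n\colon A_{\pi(n)}\to B_n$ are isomorphisms, then $(a_m)_m\mapsto(\alpha_n(a_{\pi(n)}))_n$ maps $\prod_n A_n$ onto $\prod_n B_n$ (after discarding finitely many coordinates). It sends $\bigoplus_n A_n$ onto $\bigoplus_n B_n$, and hence induces an isomorphism of the coronas. So the work is in (1)$\Rightarrow$(2). There I will assume $\OCAT$ and $\MA$ and fix an isomorphism $\Phi\colon \cQ(\bigoplus_n A_n)\to\cQ(\bigoplus_n B_n)$.

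The first step is to invoke the lifting theorem of \cite{mckenney2020forcing}. Under $\OCAT+\MA$, since the $A_n$ are separable and unital with the metric approximation property (they are nuclear), $\Phi$ has an asymptotically algebraic lifting $\Phi_*$ as in Definition~\ref{D.Trivial}. This gives a bijection $f$ between cofinite sets and maps $\varphi_n\colon A_{f(n)}\to B_n$. The same applies to $\Phi^{-1}$, giving $g$ and $\psi_n\colon B_{g(n)}\to A_n$.

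The second step extracts quantitative information coordinatewise. Because $\Phi_*$ lifts a $*$-homomorphism and the lifting is coordinatewise, a standard diagonal argument shows the following. If for some $\delta>0$ infinitely many $\varphi_n$ failed to be $\delta$-$*$-homomorphisms on the unit ball, one could choose witnesses $a_n,b_n$ in those coordinates (zero elsewhere) and obtain a contradiction in the corona. Hence $\varphi_n$ is an $\varepsilon_n$-$*$-homomorphism with $\varepsilon_n\to0$. The same holds for $\psi_n$, and $\psi_{f(n)}\circ\varphi_n$ (after composing the bijections, which agree with mutual inverses on cofinite sets) is uniformly $\varepsilon_n$-close to the identity on the unit ball. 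In particular $\varphi_n$ is $\varepsilon_n'$-injective and $\varepsilon_n'$-surjective with $\varepsilon_n'\to0$: injectivity follows from $\|a\|\approx\|\psi\varphi(a)\|\lesssim\|\varphi(a)\|$, and surjectivity from $b\approx\varphi_n(\psi(b))$ using the symmetric statement.

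The final step applies Theorem~\ref{T.Classifiable} together with Proposition~\ref{P.AlmostIsomorphic}, which is exactly Corollary~\ref{C.e-isomorphic}. The class of stably finite Elliott-classifiable algebras is Ulam stable with respect to $\sfvN$. Therefore, once $\varepsilon_n'+f(\varepsilon_n')<1/420000$, which holds for all but finitely many $n$, the algebras $A_{f(n)}$ and $B_n$ are isomorphic. Discarding those finitely many $n$ from the domain of $f$ yields the bijection $\pi$ required in (2).

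The main obstacle is the second step. One has to make sure the lifting supplied by \cite{mckenney2020forcing} really is asymptotically algebraic in the stated strong form: a bijection of cofinite sets rather than just a coordinate map with finite-to-one fibres, and with each $A_{f(n)}$ mapped into the single coordinate $B_n$. I would first try to extract this from \cite{mckenney2020forcing} using that the $A_n$, $B_n$ are unital and simple, so that central projections of the corona force the coordinates to match up. Failing that, I would check whether \cite{vignati2022rigidity} states the needed form directly.
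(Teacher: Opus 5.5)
Your proposal is correct and follows essentially the same route as the paper. Both obtain an asymptotically algebraic lifting from \cite{mckenney2020forcing}, using that simple algebras have no central projections and that nuclearity gives the metric approximation property. Both then extract coordinatewise $\varepsilon_n$-$*$-isomorphisms $\varphi_n\colon A_{f(n)}\to B_n$ with $\varepsilon_n\to 0$, and apply Corollary~\ref{C.e-isomorphic} for all but finitely many $n$. The only difference is that you derive approximate injectivity and surjectivity of the $\varphi_n$ from a lifting of $\Phi^{-1}$, whereas the paper cites \cite[Lemma~3.12 and Theorem~3.16]{mckenney2020forcing} for this step.
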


The proof uses the following theorem, proved in \cite{mckenney2020forcing} but not stated in a form convenient for us; we will explain why it is true for reader's convenience. 

\begin{theorem}\label{T.McKV}
Assume $\OCAT$ and $\MA$.  If \cstar-algebras $A_n$, $B_n$ are unital, separable, have no central projections, that $\bigoplus_n A_n$ has the metric approximation property   then, with $A=\bigoplus_n A_n$ and $B=\bigoplus_n B_n$, every isomorphism $\Phi\colon \cQ(A)\to \cQ(B)$ is algebraically trivial. 
\end{theorem}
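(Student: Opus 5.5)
The plan is to reduce Theorem~\ref{T.McKV} to the main lifting theorem of \cite{mckenney2020forcing}: under $\OCAT$ and $\MA$, every isomorphism between coronas of such direct sums has an asymptotically algebraic lifting. The resulting coordinate maps will be approximate isomorphisms with error tending to $0$. Ulam stability is not needed at this stage. The aim is only to extract, from McKenney--Vignati's results, a lifting of the form in Definition~\ref{D.Trivial} in which the $\varphi_n$ are $\varepsilon_n$-$*$-isomorphisms with $\varepsilon_n\to 0$. Upgrading these to genuine $*$-isomorphisms is the separate step carried out in the proof of Theorem~\ref{T.CoronaRigidity+}, via Ulam stability and Proposition~\ref{P.AlmostIsomorphic}. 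I would therefore read ``algebraically trivial'' here in the sense used in \cite{mckenney2020forcing}, namely that $\Phi$ has an asymptotically algebraic lifting, and state this explicitly.

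\textbf{Step 1.} Since $\bigoplus_n A_n$ has the metric approximation property, the main theorem of \cite{mckenney2020forcing} applies. It shows that $\OCAT$ and $\MA$ imply that $\Phi$ has a lifting that is ``skeletal'': there are a partition of $\bbN$ into finite intervals and maps between the corresponding finite blocks $\prod_{n\in I_k}A_n\to\prod_{m\in J_k}B_m$ which together lift $\Phi$. Applying the same argument to $\Phi^{-1}$, and passing to a common coarsening of the two partitions, gives block maps in both directions that are asymptotically inverse to each other. Each block map is an $\varepsilon_k$-$*$-homomorphism with $\varepsilon_k\to 0$. If this failed along infinitely many blocks, one could choose elements witnessing a uniform error $\delta$ on those blocks. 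These elements assemble into elements of $\prod_n A_n$ whose images under $\Phi$ would violate the fact that $\Phi$ is a $*$-homomorphism modulo $\bigoplus_n B_n$.

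\textbf{Step 2.} Use the assumption that the $A_n$ and $B_n$ have no central projections to refine the block maps into coordinate maps. For each $n$, the unit $1_{A_n}$ is a central projection of $\prod_m A_m$. Its image under an approximately multiplicative, almost unital block map is an almost central almost projection. It can be perturbed to a genuine central projection of the relevant finite block $\prod_{m\in J_k}B_m$, and such projections are of the form $\sum_{m\in S}1_{B_m}$ because each $B_m$ has no nontrivial central projections. Approximate injectivity, coming from the approximate inverse, forces $S\neq\emptyset$. Applying the same argument to the inverse map shows that $|S|=1$ for all but finitely many $n$. This defines the bijection $f$ between cofinite sets, and compressing the block maps by these projections gives coordinate maps $\varphi_n$.

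\textbf{Main obstacle.} I expect the main obstacle to be Step 1: stating the relevant result of \cite{mckenney2020forcing} in this form and checking its hypotheses, in particular the metric approximation property, which is needed for the $\OCAT$ part of the argument. The central-projection bookkeeping in Step 2 should be routine, since stably finite classifiable simple algebras have no nontrivial central projections. I would cite the theorem and indicate the block-to-coordinate reduction rather than reprove it.
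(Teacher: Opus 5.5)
Your proposal is correct and follows essentially the paper's route. The paper's proof just cites \cite[Theorem~C]{mckenney2020forcing} (under $\OCAT$, $\MA$ and the metric approximation property, $\Phi$ is topologically trivial) and \cite[Theorem~E]{mckenney2020forcing} (for unital separable algebras without central projections, a topologically trivial isomorphism has an asymptotically algebraic lifting), and your Steps~1--2 unpack these two results. Reading ``algebraically trivial'' as ``has an asymptotically algebraic lifting'' is exactly what the paper's proof delivers and what Theorem~\ref{T.CoronaRigidity+} uses. That reading is in fact forced: with the literal Definition~\ref{D.Trivial} (each $\varphi_n$ a $*$-isomorphism), the statement would contradict Theorem~\ref{T.Nontrivial+} for $A_n=B_n=C([0,1],\cK(H))^+$, which satisfy all the hypotheses.
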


\begin{proof}  \cite[Theorem~C]{mckenney2020forcing} states that our assumptions imply that $\Phi$ is topologically trivial, which means that the set ($\pi_A$ and $\pi_B$ are the quotient maps from the multiplier algebras to the corresponding coronas and $\cM(A)_1$ denotes the unit ball of the multiplier algebra)
\[
\{(a,b)\in \cM(A)_1\times\cM(B)_1: \Phi(\pi_A(a))=\pi_B(b)\}
\]
is Borel in the strict topology (\cite[Definition~A]{mckenney2020forcing}).   \cite[Theorem~E]{mckenney2020forcing} asserts that, under our assumptions, every topologically trivial isomorphism has an asymptotically algebraic lifting. 
\end{proof}

\begin{proof}[Proof of Theorem~\ref{T.CoronaRigidity+}]
	Assume $\OCAT$ and $\MA$. 
	If $A_n$, $B_n$, for $n\in \bbN$, are stably finite,  Elliott-classifiable unital \cstar-algebras and that $\Phi$ is an isomorphism between the coronas of $\bigoplus_n A_n$ and $\bigoplus_n B_n$. 
Since these algebras are simple they have no central projections and 	
 $\OCAT$, $\MA$, and Theorem~\ref{T.McKV} imply that $\Phi$ has an asymptotically algebraic lifting. Fix $f$ and $\varphi_n$, for $n\in \bbN$, as in Definition~\ref{D.Trivial}. 

A simple argument (\cite[Lemma 3.12]{mckenney2020forcing}) shows that there are $\varepsilon_n>0$, for $n\in \bbN$,   such  that $\lim_n \varepsilon_n=0$ and $\varphi_n\colon A_{f(n)}\to B_n$ is an $\varepsilon_n$-$*$-isomorphism. See  \cite[Theorem~3.16]{mckenney2020forcing}.  By Corollary~\ref{C.e-isomorphic}, there is an isomorphism $\Psi_n\colon A_{f(n)}\to B_n$ for all sufficiently large $n$. Let $\Psi_n(a)=0$ for other values of $n$.  Then the function $(a_n)_n\mapsto (\Psi_n(a_{f(n)}))_n $ is a $*$-homomorphism from $\prod_n A_n$ to $\prod_n B_n$ that is a lifting of a trivial isomorphism between $\cQ(A)$ and $\cQ(B)$. 
\end{proof}

Notably, the isomorphism between coronas constructed in the proof of Theorem~\ref{T.CoronaRigidity+} may differ from the original isomorphism. In light of Corollary~\ref{P.Counterexample}, with $A$ denoting the unitization of $C([0,1],\cK(H))$, there is an automorphism of $A_\infty$ that has no 
lifting of the form $(a_n)\mapsto \Psi_n(a_{f(n)})$ for an isomorphism $\Psi_n\colon A\to A$.

\begin{theorem}\label{T.CoronaEmbedding+} Assume $\OCAT$ and $\MA$. 
	Suppose that $A$ and $B$ are separable, unital, \cstar-algebras and that $A$ is stably finite and Elliott classifiable. Then the following are equivalent. 
	\begin{enumerate}
		\item $A_\infty$ embeds into $\cQ(B\otimes \cK)$ (not necessarily unitally). 
		\item $A$ embeds into $B\otimes \cK$. 
	\end{enumerate}
\end{theorem}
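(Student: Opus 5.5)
The easy direction (2)$\Rightarrow$(1): if $A$ embeds into $B\otimes\cK$, apply the embedding coordinatewise to get an embedding $\ell_\infty(A)\to\ell_\infty(B\otimes\cK)$. This sends $c_0(A)$ into $c_0(B\otimes\cK)$ and induces an injective $*$-homomorphism $A_\infty\to \ell_\infty(B\otimes\cK)/c_0(B\otimes\cK)$. The latter embeds into $\cQ(B\otimes\cK)$. To see this, fix pairwise orthogonal projections $q_n\in\cM(\cK)$ with $\sum q_n=1$ (strictly) and $q_n\cK q_n\cong\cK$. Then $(x_n)\mapsto\sum_n (1\otimes q_n)x_n(1\otimes q_n)$ (with $x_n$ transported into the corner $B\otimes q_n\cK q_n$) embeds $\ell_\infty(B\otimes\cK)$ into $\cM(B\otimes\cK)$. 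Under this map, the preimage of $B\otimes\cK$ is exactly $c_0(B\otimes\cK)$, so the induced map on quotients is injective.

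For (1)$\Rightarrow$(2) I would follow the same pattern as the proof of Theorem~\ref{T.CoronaRigidity+}. Fix an embedding $\Phi\colon A_\infty\to\cQ(B\otimes\cK)$. Note that $A_\infty$ is the corona of $\bigoplus_n A$, and $A$ is simple and unital, so it has no central projections; moreover $\bigoplus_n A$ is nuclear, hence has the metric approximation property.

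The first step is to invoke the embedding version of the rigidity results of \cite{mckenney2020forcing} and \cite{vignati2022rigidity} (the analogue of Theorem~\ref{T.McKV} for embeddings into coronas of stabilizations, which is the form used in \cite[Theorem~D]{vignati2022rigidity}). Under $\OCAT$ and $\MA$ this should give that $\Phi$ is topologically trivial and then has an asymptotically additive lifting. Concretely, there are pairwise orthogonal (or eventually disjoint) hereditary pieces, for instance given by projections $r_n\in\cM(B\otimes\cK)$ with $r_n(B\otimes\cK)r_n\subseteq B\otimes\cK$, together with maps $\varphi_n\colon A\to r_n(B\otimes\cK)r_n$, such that $(a_n)\mapsto\sum_n\varphi_n(a_n)$ lifts $\Phi$ on the unit ball. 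This step is the main obstacle. I expect to have to check carefully that the cited results apply to the corona of a stable algebra with nonunital range, and that the pieces can be chosen inside $B\otimes\cK$ itself rather than merely inside the multiplier algebra. I would first try to quote Vignati's embedding theorem as is and use stability of $B\otimes\cK$ to compress each $\varphi_n$ into a corner of $B\otimes\cK$.

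Given such a lifting, the argument of \cite[Lemma~3.12]{mckenney2020forcing} yields $\varepsilon_n\to0$ such that $\varphi_n$ is an $\varepsilon_n$-$*$-homomorphism for all large $n$. Injectivity of $\Phi$ gives $\varepsilon_n$-injectivity for all large $n$: otherwise a sequence of norm-one $a_n$ with $\|\varphi_n(a_n)\|$ bounded away from $1$ along infinitely many $n$ would define a nonzero element of $A_\infty$ whose image has norm strictly less than its own norm, contradicting that injective $*$-homomorphisms are isometric. By Theorem~\ref{T.Classifiable}, $A$ belongs to a class that is Ulam stable with respect to $\sfvN$. Since $A$ is classifiable it is $\mathcal Z$-stable, so it has finite nuclear dimension. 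Therefore, for large $n$, the hypotheses of Proposition~\ref{P.AlmostInclusion} hold for $\varphi_n\colon A\to B\otimes\cK$ (which is separable), and that proposition gives an embedding of $A$ into $B\otimes\cK$.
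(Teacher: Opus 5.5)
Your proposal is correct and takes essentially the paper's route. The paper quotes \cite[Theorem~4.3]{vignati2022rigidity}, which directly supplies the $\varepsilon_n$-injective $\varepsilon_n$-$*$-homomorphisms $A^{k(n)}\to M_{m(n)}(B)$ that your first step and injectivity argument reconstruct (these act on finite blocks $A^{k(n)}$ rather than single coordinates, which is harmless since one restricts to one copy of $A$), and then it applies Proposition~\ref{P.AlmostInclusion} as you do; your explicit proof of (2)$\Rightarrow$(1), which the paper omits as routine, is also correct.
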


\begin{proof} 
Only the direct implication requires a proof. Modulo Proposition~\ref{P.AlmostInclusion}, this proof is analogous to the proof of \cite[Theorem~D]{vignati2022rigidity}. 
Suppose that there is an embedding of $A_\infty$ into $\cQ(B\otimes \cK)$. 
By \cite[Theorem~4.3]{vignati2022rigidity}, there are a sequence $\varepsilon_n\to 0$, $k(n)\geq 1$, $m(n)\geq 1$,  and $\varepsilon_n$-injective $\varepsilon_n$-$*$-homomorphism\footnote{After unravelling the notation, this is exactly what the first displayed formula in Theorem~4.3 asserts.}
\[
\varphi_n\colon A^{k(n)}\to M_{m(n)}(B). 
\]
We may assume $k(n)=1$, and then Proposition~\ref{P.AlmostInclusion} implies that $A$ is isomorphic to a \cstar-subalgebra of $M_{m(n)}(B)$, which implies the desired conclusion. 
\end{proof}

We can now prove Corollary~\ref{C.Independence}. 

\begin{corollary}\label{C.Independence.+}
	\begin{enumerate}
		\item The assertion that $(M_{2^\infty})_\infty$ embeds into $\cQ(M_{3^\infty}\otimes \cK(H))$ is independent from ZFC. 
		\item 	There are UHF \cstar-algebras $A_n$, $B_n$, for $n\in \bbN$ such that the assertion that $\prod_n A_n/\bigoplus_n A_n$ and $\prod_n B_n/\bigoplus_n B_n$ are isomorphic is independent from ZFC. 
	\end{enumerate}
\end{corollary}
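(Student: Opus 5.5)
Each item is an independence result, so I will prove it in two halves: under $\OCAT$ and $\MA$ the assertion fails, and under CH it holds. The forcing-axiom halves come from Theorem~\ref{T.CoronaEmbedding+} and Theorem~\ref{T.CoronaRigidity+}. The CH halves come from the model-theoretic results of \cite{ghasemi2014reduced} and \cite{farah2017calkin}, which say that reduced products and coronas of this kind are countably saturated.

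For (1), under $\OCAT$ and $\MA$ I will apply Theorem~\ref{T.CoronaEmbedding+} with $A=M_{2^\infty}$ and $B=M_{3^\infty}$. Both are UHF, hence simple, unital, nuclear, stably finite, $\mathcal Z$-stable, and UCT, so the hypotheses of the theorem are met. If $(M_{2^\infty})_\infty$ embedded into $\cQ(M_{3^\infty}\otimes\cK(H))$, then $M_{2^\infty}$ would embed into $M_{3^\infty}\otimes\cK$. This is impossible for K-theoretic reasons. An embedding would map a projection $e$ with $[e]=\tfrac12$ in $K_0(M_{2^\infty})=\bbZ[1/2]$ to a projection in $M_{3^\infty}\otimes\cK$. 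The trace there is proportional to the unique tracial state on a corner, so its class would be a positive element of $\bbZ[1/3]$ that is divisible by every power of $2$. Positivity comes from faithfulness of the trace. No nonzero element of $\bbZ[1/3]$ is divisible by all powers of $2$, so we get a contradiction.

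For the CH half of (1), I will construct an embedding by saturation. $(M_{2^\infty})_\infty$ has density character $\mathfrak c=\aleph_1$. I expect the relevant corona $\cQ(M_{3^\infty}\otimes\cK)$ to be countably degree-1 saturated by \cite{farah2017calkin}. Under CH, a saturated target of density $\aleph_1$ contains a copy of every $\aleph_1$-dense algebra whose separable subalgebras embed approximately into it, so the problem reduces to a local one. For each $n$, the algebra $M_{2^n}$ must embed into $\cQ(M_{3^\infty}\otimes\cK)$, and this is easy because the corona contains unital copies of all matrix algebras. Every separable subalgebra of $(M_{2^\infty})_\infty$ is approximated inside an ultrapower of such finite pieces. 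I would therefore cite the CH embedding theorem from \cite{farah2017calkin}, or the analogous statement in \cite{ghasemi2014reduced}, and check that its hypotheses hold.

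For (2), I will take $A_n=M_{2^n}\otimes M_{2^\infty}$-type UHF algebras that are pairwise nonisomorphic but have elementarily equivalent reduced products. For example, $A_n=M_{2^\infty}$ for all $n$ and $B_n=M_{k_n}\otimes M_{2^\infty}$ with $k_n$ odd and growing, so that no cofinite bijection matches isomorphism classes. Under $\OCAT$ and $\MA$, Theorem~\ref{T.CoronaRigidity+} then shows that the coronas are not isomorphic. Under CH, \cite{ghasemi2014reduced} gives that reduced products $\prod_n A_n/\bigoplus_n A_n$ are countably saturated. Two elementarily equivalent saturated structures of density $\aleph_1$ are isomorphic, so it suffices to verify elementary equivalence. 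I expect that choosing sequences with elementarily equivalent ultraproducts for all ultrafilters works, via the Feferman--Vaught-type theorem of \cite{ghasemi2014reduced}. This choice of $A_n,B_n$ is the main obstacle. The pairs must be elementarily indistinguishable while remaining nonisomorphic cofinitely, and I would look for that example in \cite{ghasemi2014reduced} first.
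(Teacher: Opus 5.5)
Your forcing-axiom half of (1) is fine. Your $K_0$ argument, that $[\iota(1)]\neq 0$ in $\bbZ[1/3]$ cannot be divisible by every $2^n$, supplies a step the paper leaves implicit. Part (2), however, has a genuine gap, and your example fails as stated. The condition ``$k_n$ odd and growing'' allows, for instance, $k_n=3^n$. Then every $B_n$, hence also $\prod_n B_n/\bigoplus_n B_n$, contains a unital copy of $M_3$. On the other hand, $(M_{2^\infty})_\infty$ contains no unital copy of $M_3$: approximate unital $3\times 3$ matrix units there would lift coordinatewise and perturb to exact ones in $M_{2^\infty}$, which is impossible since $3\nmid 1$ in $\bbZ[1/2]$. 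So those two reduced products are non-isomorphic in ZFC, and CH cannot help. Even for $k_n$ prime you would have to prove $\varphi^{M_{k_n}\otimes M_{2^\infty}}\to\varphi^{M_{2^\infty}}$ for every sentence $\varphi$. That is a genuine computation of theories, and you have no tool for it. The missing idea is that no theory needs to be identified at all. The sentences form a separable set with bounded values, so a diagonal (compactness) argument gives distinct primes $p_j$ such that $\lim_j\varphi^{M_{p_j^\infty}}$ exists for every sentence $\varphi$. Put $A_i=M_{p_{2i}^\infty}$ and $B_i=M_{p_{2i+1}^\infty}$. Ghasemi's Feferman--Vaught theorem \cite{ghasemi2014reduced} then makes the two reduced products elementarily equivalent, and countable saturation plus CH makes them isomorphic. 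Under $\OCAT+\MA$, Theorem~\ref{T.CoronaRigidity+} rules out an isomorphism, because no $A_i$ is isomorphic to any $B_k$.

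The CH half of (1) is also not justified as written. Your universality principle says that a countably saturated algebra contains every algebra of density $\aleph_1$ whose separable pieces embed approximately into it. That comes from full countable saturation, i.e.\ $\aleph_1$-universality of countably saturated models, because extending an embedding one element at a time requires realizing types that involve products of the new variables. Coronas such as $\cQ(M_{3^\infty}\otimes\cK)$ are in general only known to be countably degree-1 saturated, so your transfinite construction does not go through. What \cite[Theorem~A]{farah2017calkin} actually provides is a ZFC statement specific to the Calkin algebra: every \cstar-algebra of density character $\aleph_1$ embeds into $\cQ(H)$. Its hypotheses cannot simply be ``checked'' for the target $\cQ(M_{3^\infty}\otimes\cK)$. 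Under CH the theorem applies to $(M_{2^\infty})_\infty$, and the step you are missing is the embedding $\cQ(H)\hookrightarrow\cQ(M_{3^\infty}\otimes\cK)$ induced by $T\mapsto 1\otimes T\in\cM(M_{3^\infty}\otimes\cK)$. This map is injective modulo the ideal: if $1\otimes T\in M_{3^\infty}\otimes\cK$, applying the slice map of a state on $M_{3^\infty}$ shows that $T$ is compact. With that step, your (1) becomes exactly the paper's proof.
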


\begin{proof}
	For the first part, by  Theorem~\ref{T.CoronaEmbedding+} $\OCAT$ and $\MA$ imply that $(M_{2^\infty})_\infty$ does not embed into $\cQ(M_{3^\infty}\otimes \cK(H))$. 
	
We will prove that the Continuum Hypothesis (CH) implies the opposite conclusion. 
	By \cite[Theorem~A]{farah2017calkin}, 
	every \cstar-algebra of density character $\aleph_1$ embeds into the Calkin algebra $\cQ(H)$ and therefore CH implies that $(M_{2^\infty})_\infty$ embeds into $\cQ(H)$. Since $\cQ(H)$ embeds into $\cQ(B\otimes \cK(H))$ for every unital $B$, the conclusion follows.

	The second part makes some use of model theory. As in the proof of \cite[Lemma~5.2]{ghasemi2014reduced}, there is an increasing sequence of prime numbers $p_j$, for $j\in \bbN$, such that for every sentence $\varphi$ in the language of \cstar-algebras $\lim_{j\to \infty} \varphi^{M_{p_j^\infty}}$ exists. By Ghasemi's Feferman--Vaught theorem, as explained in the proof of \cite[Proposition~5.3]{ghasemi2014reduced}, this implies that for every two increasing sequences $j(i)$, $j'(i)$, for $i\in \bbN$, the coronas of $\bigoplus_i  {M_{p_j(i)^\infty}}$ and 
$\bigoplus_i  {M_{p_j'(i)^\infty}}$ are elementarily equivalent, and therefore isomorphic if CH holds. 
On the other hand, the coronas of
$\bigoplus_i  {M_{p_{2j}^\infty}}$
and $\bigoplus_i  {M_{p_{2j+1}^\infty}}$ are not isomorphic by Theorem~\ref{T.CoronaRigidity+}. 
\end{proof}

\section{Open problems}

We end with a list of open problems that seem natural in light of our results.

\begin{question}
\label{P.UlamStability} Is the class of all stably finite, nuclear, separable \cstar-algebras Ulam stable with respect to von Neumann algebras?
\end{question}

\begin{question} Is the class of Ulam stable \cstar-algebras with respect to von Neumann algebras closed under passing to full hereditary subalgebras?
\end{question}

\begin{question}
What about Ulam stability with respect to von Neumann algebras for purely infinite nuclear \cstar-algebras?	
\end{question}

\begin{question} Is there a \cstar-algebra that is not Ulam stable with respect to von Neumann algebras?
\end{question}

\section*{Acknowledgments} IF would like to thank Ryszard Nest and Stuart White for helpful remarks  and to the Department of Mathematics at TU Dresden for their hospitality in January 2026 when this work commenced.  We would also like to thank Austin Shiner and Stuart White for remarks on the first draft of this paper.

\bibliography{abelian}

\end{document}